\DeclareMathOperator{\conv}{conv}
\newcommand{\groupnamea}{\scshape{id}}
\newcommand{\A}{\mathcal{A}}
\newcommand{\B}{\mathcal{B}}
\newcommand{\W}{\mathcal{W}}
\newcommand{\FF}{\mathscr{F}}
\newcommand{\KK}{\mathscr{K}}
\newcommand{\cP}{\mathcal{P}}
\newcommand{\cR}{\mathcal{R}}
\newcommand{\R}{\mathbb{R}}
\newcommand{\KRW}[1]{\operatorname{KRW}(#1)}
\newcommand{\GPF}[1]{\FF_{#1}} 
\newcommand{\GPFclosed}[1]{\overline{\FF}_{#1}} 
\newcommand{\Tn}[1]{\mathscr{T}_{#1}}
\newcommand{\metric}{\rho}
\newcommand{\CA}[1]{\W_{#1}}
\newcommand{\mcone}[1]{\mathbf M_{#1}}
\newcommand{\mconeclosure}[1]{\overline{\mathbf M}_{#1}}
\newcommand{\mconeinterior}[1]{\mathring{\mathbf M}_{#1}}
\newcommand{\kfan}[1]{\KK_{#1}} 
\newcommand{\kfanclosed}[1]{\overline{\KK}_{#1}} 
\newcommand{\splitpm}[2]{\delta_{#1,#2}}
\newcommand{\elsplitsclosed}[1]{\overline{\mathscr E}_{#1}}
\newcommand{\elsplits}[1]{\mathscr E_{#1}}
\newcommand{\tk}[2]{t^{(#1,#2)}}
\newcommand{\stratsharp}[1]{\mathscr S_{#1}}
\newcommand{\pstratsharp}[1]{\overline{\mathscr S}_{#1}}
\newcommand{\MF}[1]{\operatorname{MF}(#1)}
\newcommand{\mfan}[1]{\overline{\mathscr M}_n}
\newcommand{\coh}[1]{H_{#1}}
\newcommand{\hk}[2]{h^{(#1)}_{#2}}
\newcommand{\hhk}[2]{\hat{h}^{(#1)}_{#2}}
\newcommand{\vk}[2]{v^{(#1)}_{#2}}
\numberwithin{equation}{section}
\theoremstyle{plain}
\newtheorem{theorem}{Theorem}[section]
\newtheorem{lemma}[theorem]{Lemma}
\newtheorem{corollary}[theorem]{Corollary}
\newtheorem{proposition}[theorem]{Proposition}
\newtheorem*{problem}{Problem}
\theoremstyle{definition}
\newtheorem{definition}[theorem]{Definition}
\newtheorem{remark}[theorem]{Remark}
\newtheorem{example}[theorem]{Example}
\newtheorem{question}[theorem]{Question}
\crefname{proposition}{proposition}{propositions}
\crefname{lemma}{lemma}{lemmas}
\crefname{corollary}{corollary}{corollarys}
\crefname{remark}{remark}{remarks}
\newcommand{\eqnum}{\refstepcounter{equation}\textup{\tagform@{\theequation}}}
\title[The Wasserstein Arrangement]{Combinatorial invariants of finite metric spaces and the Wasserstein arrangement
}
\author{Emanuele Delucchi}
\address{Emanuele Delucchi, IDSIA USI-SUPSI, University of applied arts and sciences, Lugano, Switzerland}
\email{emanuele.delucchi@supsi.ch}
\author{Lukas~K\"uhne}
\address{Lukas K\"uhne, Universit\"at Bielefeld, Fakult\"at f\"ur Mathematik, Bielefeld, Germany}
\email{lkuehne@math.uni-bielefeld.de}
\author{Leonie Mühlherr}
\address{Leonie Mühlherr, Universit\"at Bielefeld, Fakult\"at f\"ur Mathematik, Bielefeld, Germany}
\email{lmuehlherr@math.uni-bielefeld.de}
\date{\today}
\begin{document}

\subjclass[2020]{Primary 54E35, 52B12; Secondary 52C35, 52B05, 52B55}

\begin{abstract}
	In 2010, Vershik proposed a new combinatorial invariant of metric spaces given by a class of polytopes that arise in the theory of optimal transport and are called ``Wasserstein polytopes'' or ``Kantorovich-Rubinstein polytopes'' in the literature. 
	Answering a question posed by Vershik, we describe the stratification of the metric cone induced by the combinatorial type of these polytopes via a  hyperplane arrangement. Moreover, we study its relationship with the stratification by combinatorial type of injective hulls (i.e., tight spans) and, in particular, with several types of metrics arising in phylogenetic analysis. We also compute enumerative invariants in the case of metrics on up to six points.
\end{abstract}

\maketitle

\hfill{{\em In memory of Andreas W.\ M.\ Dress.}}

\section{Introduction}

We study {finite metric spaces} through the lens of polytopes and hyperplane arrangements. 

Finite metric spaces arise in several applied contexts. For instance, in   mathematical biology finite metrics  model genetic dissimilarities between different species \cite{Phylogenetics}. A main research direction in this setting is to identify suitable classes of metric spaces and study the combinatorics and geometry of the associated subset of the metric cone, e.g., for geometric statistics. This field of research goes back to the study of phylogenetic trees \cite{Bandelt-Dress-Trees,BHV}, has grown to include more general phylogenetic networks \cite{HRS} and is presently very active (see e.g., \cite{Devadoss-petti,Francis_Steel,Fischer_Francis}).
 The \emph{injective hull}, introduced by Isbell~\cite{Isbell} and rediscovered by Dress~\cite{Dress} under the name \emph{tight span}, is a classical combinatorial invariant of metric spaces that is widely used in theoretical as well as in applied settings.
The stratification of the metric cone by combinatorial type of tight spans is equivalent to the one determined by the subdivisions of the second hypersimplex studied by Sturmfels and Yu~\cite{SturmfelsYu} and recently by Casabella, Joswig, and Kastner~\cite{CJK24}.

Motivated by the theory of optimal transport, Vershik described a   correspondence between finite metric spaces and a class of symmetric convex polytopes, the so-called \emph{Kantorovich-Rubinstein-Wasserstein (KRW) polytopes} or \emph{fundamental polytopes}~\cite{Vershik}. 
\begin{definition}\label{def:KRW}
	The KRW polytope of an $n$-metric $\metric$ is a polytope in $\R^n$ defined as the convex hull
	\[
	\KRW{\metric} = \conv\left\{\left. \frac{e_i-e_j}{\metric_{ij}} \right\vert 1\le i , j \le n\right\},
	\]
	where $e_i$ is the $i$-th standard basis vector of $\R^n$. From now on, we abbreviate \begin{equation}\label{equ:pij}
    p_{ij}:=\frac{e_i-e_j}{\metric_{ij}} \textrm{ for } i\neq j.
    \end{equation}
	The metric is called \emph{generic} if it is strict and the KRW polytope is simplicial.
\end{definition}

	The class of KRW polytopes subsumes several well-studied families of polytopes. For instance, if $\metric$ is the $n$-metric with $\metric_{ij}=1$ for all $i\neq j$, then $\KRW{\metric}$ is known as the type $A_n$ \emph{root polytope}.

	If $\metric$ is a graph-metric (i.e., there is a graph $G$ with vertex set $[n]$ such that $\metric_{ij}$ is the number of edges of a shortest path from $i$ to $j$ in $G$), then $\KRW{\metric}$ is the {\em symmetric edge polytope} of~$G$~\cite{dali-delucchi-michalek}, also known as the {\em adjacency polytope} of $G$ \cite{Chen,CDM}. If $\rho$ is a tree metric (i.e., it can be represented as the distances among a subset of the vertices of a metric tree), the number of faces of $\KRW{\metric}$ can be computed explicitly, and the dual polytope is a zonotope \cite{DelucchiHoessly}.

The dual of a KRW polytope is a so-called \emph{Lipschitz polytope} \cite{Vershik,SS18}.
These belong to the class of (symmetric) \emph{alcoved polytopes}~\cite{LP1}, which  recently gained significant attention due to their connections to theoretical physics in the form of positive geometries and polypositroids~\cite{LP2}.

\bigskip

The problem of understanding the combinatorial structure (e.g., computing face numbers) of KRW and Lipschitz polytopes is open and significant for applications, see e.g., \cite{Becedasetal,venturelloetal}. In this article, we focus on a related problem that goes back to a question by Vershik. 
\begin{problem}[{\cite[``General Problem", \S 1]{Vershik}}]
	Study and classify finite metric spaces according to the combinatorial properties of their KRW polytopes.
\end{problem}

The first progress on this question was achieved by Gordon and Petrov who gave a description of the face poset of KRW polytopes via linear inequalities on the values of the metric~\cite{GordonPetrov}, see~\Cref{sec:GP} for further details.
This is our starting point for an in-depth study of KRW polytopes using the theory of hyperplane arrangements.

\subsection{Summary of the main results}

\begin{enumerate}
\item The subdivision of the metric cone by combinatorial types of KRW polytopes is a (half-open) polyhedral fan: we call this the {\em type fan of KRW polytopes}.
 We introduce a \emph{Wasserstein arrangement} which refines the type fan of KRW polytopes. This arrangement builds upon the  description of the faces of KRW polytopes given by Gordon and Petrov \cite{GordonPetrov} and may be of independent interest.
Moreover, via the above connection to alcoved polytopes, intersecting this arrangement with the metric cone naturally yields a refinement of the type fan of symmetric alcoved polytopes.

\item Using the computer algebra system \texttt{OSCAR}~\cite{OSCAR-book} together with the Julia package \texttt{CountingChambers.jl}~\cite{BEK23} and the software \texttt{TOPCOM}~\cite{Rambau} we obtain an enumeration of the combinatorial types of KRW polytopes of generic metrics on $n=4,5,6$ points. This is displayed in \Cref{sim_table}, where the number of combinatorial types of generic KRW polytopes is shown in the column titled ``Labeled''. The column ``Unlabeled'' shows the number of combinatorial types up to combinatorial isomorphisms (where we are allowed to permute the elements in our metric space which induces an action on the KRW polytopes).
The last column of this table shows the number of chambers of the Wasserstein arrangement which agrees with the number of combinatorial type of generic KRW polytopes up to $n\le 5$. This stems from the fact that for $n\ge 6$ the Wasserstein arrangement is a strict refinement of the type fan of KRW polytopes, see~\Cref{prop:refinement}.

This table shows that there are many more combinatorial types of KRW polytopes than there are of tight spans (\Cref{defIH}), as
for $n=6$ there are 194,160 different tight spans which up to symmetry come in $339$ orbits~\cite{SturmfelsYu}.
	
	\begin{table}[th!] 
		\centering 
		
		\begin{tabular}{cccc}
			\toprule
			$n$  & Unlabeled & Labeled  & \# Chambers\\ 
			\midrule 
			3  & 1 & 1  & 1\\ 
			4 & 1 & 6  & 6\\ 
			5 & 12 & 882 & 882\\ 
			6 & 25,224 & 17,695,320 & 6,677,863,200 \\ 
			\bottomrule  
		\end{tabular} 
	\caption{Combinatorial types of KRW polytopes of generic $n$-metrics and chambers of the Wasserstein arrangement.} 
		\label{sim_table} 
	\end{table}

\item We clarify the relationship between KRW polytopes and tight spans by providing examples of five-point metrics with isomorphic tight spans and combinatorially different KRW polytopes and vice versa (\Cref{newthm}). Hence, the two fan structures on the metric cone induced by the tight spans and the KRW polytopes are not refinements of each other. For a comparison of the various fan structures we consider on the (pseudo)metric cone, with references to the relevant definitions see \Cref{fan_table}.
\def\incomp{\not\asymp}
\def\incomp{\#}
\begin{table}[h!] 
		\centering
        {\small 
		\begin{tabular}{lllll}
			\toprule
			Name  & Reference & Support & Relations \\ 
			\midrule 
            $\mcone{n}$, metric cone & Def.~\ref{def:pmc} & & \\
            $\mconeclosure{n}$, pseudometric cone & Def.~\ref{def:pmc} & & 
            Eucl.~closure of $\mcone{n}$ \\
           $\mathscr{T}_n$, type fan of KRW polytopes & Def.~\ref{def:type_fan_KRW} & $\mcone{n}$ & $\mathscr{F}_n\preceq \Tn{n}$\\
            $\mathscr{F}_n$, Wasserstein fan & Def.~\ref{defWF}  & $\mcone{n}$ & -  \\ 
             $\GPFclosed{n}$ & Def.~\ref{defWF}  & $\mconeclosure{n}$ & $\GPF{n}=\GPFclosed{n}\cap \mcone{n}$ \\ 
            $\MF{n}$, metric fan (wrt.~tight spans) & Def.~\ref{def:metric_fan} & $\mconeclosure{n}$ & $\MF{n}\succeq\GPFclosed{n}$ for $n<5$  \\ 
             &  &  & $\MF{n}\incomp\GPFclosed{n}$ for $n\geq 5$  \\
			\bottomrule  
		\end{tabular} 
        }
	\caption{Overview of the fan structures on the metric cone mentioned in this paper. The relation $\mathscr A \preceq \mathscr B$ means that $\mathscr A$ is a refinement of $\mathscr B$, and $\mathscr A \incomp \mathscr B$ means that 
    neither $\mathscr A \preceq \mathscr B$ nor $\mathscr A \succeq \mathscr B$. 
    } 
		\label{fan_table} 
	\end{table}
\item We compare the fan structure induced by the Wasserstein arrangement on the metric cone with several classes of metrics that have appeared prominently in the literature on phylogenetic analysis. In particular, we show that the space of tree-like metrics and of circular-decomposable metrics are subfans of the Wasserstein fan (\Cref{prop:tl}, \Cref{thm:cd}), thus so is the simplicial fan of circular split networks introduced by Devadoss and Petti \cite{Devadoss-petti} (and appearing also in work of Hacking, Keel and Tevelev \cite{HKT}), see \Cref{DP-HKT}. Moreover, the set of metrics satisfying the \emph{six-point condition} of Dress-Huber-Koolen-Moulton~\cite{SixPoints} is a subfan of the Wasserstein fan as well (\Cref{prop:6p}). However, the set of all totally split-decomposable metrics is not a subfan of the Wasserstein fan (see \Cref{example_decomp}), and neither is the set of all \emph{consistent, totally split-decomposable metrics} (\Cref{rem:consistent}).We also prove that the set of {\em antipodal metrics} is a union of (relatively open) cones of the Wasserstein fan (\Cref{prop:anti}) that form indeed a subfan  (\Cref{antisubfan}). 
These results are summarized in \Cref{phyl_table}.  

\begin{table}[th!] 
		\centering	
		\begin{tabular}{lll}
			\toprule
			Class of metric spaces  
            
            & Supports a subfan of & Reference\\ 
			\midrule 
            tree-like 
            
            & $\Tn{n}$, hence of $\GPF{n}$  & Prop.~\ref{prop:tl}\\ 
            circular-decomposable 
            
            & $\Tn{n}$, hence of $\GPF{n}$ & Thm.~\ref{thm:cd}\\ 
            satisfying the six-point-condition 
            
            & $\Tn{n}$, hence of $\GPF{n}$ & Prop.~\ref{prop:6p}\\ 
            antipodal
            & $\Tn{n}$, hence of $\GPF{n}$
            & Prop.~\ref{prop:anti}, Rem.~\ref{antisubfan} \\
            split-decomposable  
            
            & Neither $\GPF{n}$ nor $\Tn{n}$ ($n\geq 5$) & Ex.~\ref{example_decomp}\\ 
            consistent split-decomposable 
            
            & Neither $\GPF{n}$ nor $\Tn{n}$ ($n\geq 5$)& Prop.~\ref{rem:consistent} \\ 
			\bottomrule  
		\end{tabular} 
	\caption{Overview on whether some notable properties of metric spaces are encoded by the combinatorial type of the KRW polytope. This is expressed by whether the corresponding subset of the metric cone is a subfan of the type fan $\Tn{n}$. Note that we provide explicit witnesses for the negative results.} 
		\label{phyl_table} 
	\end{table}

We suggest the following open problem, which we cannot address with our current techniques.
\begin{question}[Tree-based metrics] The notion of \emph{tree-based phylogenetic network} has gathered considerable attention in the literature on phylogenetic analysis, since its introduction by Francis and Steel in 2015  \cite{Francis_Steel,Francis_Semple_Steel}. The space of such tree-based phylogenetic networks has been the object of recent research by Fischer and Francis 
\cite{Fischer_Francis}.
In our context, it is therefore natural to ask whether the metric spaces arising from tree-based phylogenetic networks are a union of strata of the Wasserstein fan.
\end{question}

\item We derive a formula for the number of edges of the KRW polytopes of non-generic strict metrics. To the best of our knowledge, the following is open.

 \begin{question}
 Compute the full $f$-vector of KRW polytopes of non-generic strict metrics. Can the approach and the formulae given in \Cref{sec:strict_case} be generalized?
 \end{question}

\end{enumerate}

\subsection{Roadmap of the article}

In~\Cref{sec:BG} we introduce the metric cone, we establish some terminology related to polyhedral geometry, we state Gordon and Petrov's characterization of faces of the KRW polytope and prove a few preparatory lemmas.
Our main tool for capturing the combinatorial structures of KRW polytopes, the Wasserstein arrangement, is defined and discussed in~\Cref{sec:WA}.
In~\Cref{sec:triangulations} we explain the connection between KRW polytopes and a certain class of regular triangulations of type $A$  root polytopes.
\Cref{sec:tight_spans} is devoted to the relationship between KRW polytopes and tight spans.
In~\Cref{sec:subfans} we explore several noteworthy subfans of the Wasserstein arrangement stemming from combinatorial phylogenetics, and in~\Cref{sec:strict_case}
we present some results concerning the $f$-vector of KRW polytopes associated with strict metrics.
Lastly, in~\Cref{sec:compuations} we summarize our computational findings on KRW polytopes for metrics on up to $6$ points. The paper is rounded up by two appendixes. In~\Cref{ats} we focus on the relationship between the tight span and a certain subcomplex that we call the ``pruned tight span'', while  in~\Cref{sec:five_point_metrics_comp} we list the types of KRW polytopes of generic and strict non-generic metric spaces on $n=5$ points along with some of their enumerative invariants.

\subsection*{Acknowledgments}

We thank Nick Early and Benjamin Schr\"oter for fruitful discussions, as well as Laura Casabella and Michael Joswig for helpful comments on the manuscript.
We are grateful to J\"org Rambau for providing us with relevant triangulations of the root polytope computed using \texttt{TOPCOM}.
We would like to thank Alexander Postnikov for pointing out a mistake in an earlier version of the article.
We also thank two anonymous referees for their valuable comments and suggestions.

LK and LM are funded by the Deutsche Forschungsgemeinschaft (DFG, German Research Foundation)
– Project-ID 491392403 – TRR 358 and LK is supported by
the DFG SPP 2458 -- 539866293.

\section{The main characters}
\label{sec:BG}

\subsection{The metric cone}

 \begin{definition}
 An $n$\emph{-metric} is a real symmetric $n\times n$ matrix~$\metric$ with entries $\metric_{ij}$ for $1\le i,j\le n$ satisfying
\begin{enumerate}
	\item $\metric_{ii}=0$ for all $1\le i \le n$,
	\item $\metric_{ij}>0$ for all $1\le i\neq j\le n$ and
	\item $\metric_{ij}+\metric_{jk}\ge \metric_{ik}$ for all $1\le i,j,k\le n$.
\end{enumerate}
The metric $\metric$ is called \emph{strict} if $\metric_{ij}+\metric_{jk}> \metric_{ik}$ for all $j\in \left[n\right]\setminus \{i,k\}$.
\end{definition}

Every $n$-metric $\metric$ is given as a symmetric $n\times n$ matrix with zero diagonal, thus it is determined by the $\binom{n}{2}$ values $\metric_{ij}$ with $i < j$.
This establishes a bijective correspondence between the set of all $n$-metrics and the following subset of  $\mathbb R^{\binom{n}{2}}$.

\begin{definition} \label{def:pmc}
Consider the vector space $\mathbb R^{\binom{n}{2}}$ with coordinates $x_{\{i,j\}}$ indexed by pairs of elements of $[n]$. The {\em metric cone} on $n$ elements is the subset $\mcone{n} \subseteq \mathbb R^{\binom{n}{2}}$ defined by 
$$
x_{\{i,j\}}>0, \quad x_{\{i,j\}} + x_{\{j,k\}} \geq x_{\{i,k\}},  \textrm{ for all pairwise distinct } i,j,k \in [n].
$$
The {\em pseudometric cone}, denoted by $\mconeclosure{n}$, is the Euclidean (topological) closure of $\mcone{n}$. 
\end{definition}

\begin{remark}
The space $\mconeclosure{n}$ is a polyhedral cone. It contains points that do not correspond to metrics but rather to pseudometrics (where $\metric_{ij}=0$ can be allowed). The set of all {\em strict} metrics on $n$ elements is the interior of $\mcone{n}$. 
\end{remark}

\begin{example}[Split pseudometrics] \label[example]{defsplits}
Let $A\uplus B$ be a non-trivial bipartition of $[n]$. The pseudometric $\splitpm{A}{B}$ with
$$
(\splitpm{A}{B})_{i,j}:= \min \{\vert A\cap\{i,j\}\vert,\vert B\cap\{i,j\}\vert \}
$$
has value $0$ when $i$ and $j$ are in the same part of the bipartition and has value $1$ otherwise. Any metric of this form is called an \emph{$n$-split}, the partition itself $\sigma = A\uplus B$ is called a \emph{split}.
\end{example}

\subsection{Admissible graphs and strict metrics
} \label{sec:GP}
Let $\metric$ be an $n$-metric and $F$ any face of the polytope $KRW(\metric)$.
We associate with the face $F$ a directed graph $G(F)$ on the vertex set $[n]$ which contains the edge $(i,j)$ if the point $p_{ij}$ (defined in \eqref{equ:pij}) lies on $F$. Following \cite{GordonPetrov}, the collection of graphs of the form $G(F)$ is called \emph{the combinatorial structure} of $\KRW{\metric}$.

  \begin{theorem}[\cite{GordonPetrov}, Theorem 3]\label{thm:gordon_petrov}
  Let $\metric$ be an $n$-metric and $G = ([n], E)$ be a directed graph with the set of vertices $[n]$. The following are equivalent: 
  \begin{enumerate}
      \item There exists a facet $F$ of the polytope $\KRW{\metric}$ containing every vertex $p_{ij}$ for $(i,j)\in~E$.
      \item For any $k$ and any array of directed edges $(x_i,y_i), 1\leq i \leq k$ of $G$ with all $x_i$ pairwise distinct and all $y_i$ pairwise distinct: 
		\begin{equation}\label{eq:gp_inequality}
		\sum_{i = 1}^k \metric_{x_iy_i} \leq \sum_{i= 1} ^k \metric_{x_iy_{i+1}},
		\end{equation} where $y_{k +1} = y_1$. 
  \end{enumerate}
  \end{theorem}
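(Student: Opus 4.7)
I would treat this as a convex-geometric problem in disguise: every proper face of $\KRW{\metric}$ is cut out by a linear functional $\phi(x) = \sum_i f_i x_i$ attaining its maximum value $1$ on the polytope. The inequality $\phi(p_{ij}) \le 1$ rewrites as $f_i - f_j \le \metric_{ij}$, so supporting functionals correspond to $1$-Lipschitz functions $f\colon [n]\to\R$ (modulo constants), and the vertex $p_{ij}$ lies on the associated face exactly when $f_i - f_j = \metric_{ij}$. The theorem thus amounts to characterizing, via condition (2), the edge sets $E$ for which a $1$-Lipschitz function exists that attains equality on every $(x,y)\in E$, and then upgrading the resulting face to a facet.

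\textbf{Step 1: (1) $\Rightarrow$ (2).} Given a Lipschitz $f$ tight on the facet, and a sequence $(x_i, y_i)_{i=1}^k$ of edges of $G$ with pairwise distinct $x_i$ and pairwise distinct $y_i$, the multisets $\{y_i\}_{i=1}^k$ and $\{y_{i+1}\}_{i=1}^k$ coincide, whence
\[
\sum_{i=1}^k \metric_{x_i y_i} \;=\; \sum_{i=1}^k (f_{x_i} - f_{y_i}) \;=\; \sum_{i=1}^k (f_{x_i} - f_{y_{i+1}}) \;\le\; \sum_{i=1}^k \metric_{x_i y_{i+1}},
\]
the last step being the Lipschitz inequality $f_a - f_b \le \metric_{ab}$ (with the convention $\metric_{aa} = 0$ in case $x_i = y_{i+1}$).

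\textbf{Step 2: (2) $\Rightarrow$ existence of a Lipschitz extension.} This is the heart of the proof. I would apply Farkas' lemma to the linear system
\[
f_x - f_y \ge \metric_{xy} \ \bigl((x,y)\in E\bigr), \qquad f_i - f_j \le \metric_{ij} \ (i \ne j),
\]
whose feasibility gives exactly the required $f$. Its infeasibility would yield nonnegative multipliers $\lambda$ on $E$ and $\mu$ on the arcs of $K_n$ with $\sum \lambda_{xy}\metric_{xy} > \sum \mu_{ij}\metric_{ij}$ and a vertex-wise balance asserting that $\lambda$ and $\mu$ have the same net out-flow at every vertex of $[n]$. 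The key combinatorial lemma is that any rational such $(\lambda, \mu)$ decomposes as a nonnegative rational combination of \emph{elementary} pairs, in which $\lambda$ is supported on $\{(x_i, y_i)\}_{i=1}^k \subseteq E$ and $\mu$ on the cyclically shifted $\{(x_i, y_{i+1})\}_{i=1}^k$, with the $x_i$ and the $y_i$ pairwise distinct---that is, precisely the configurations appearing in (2). The argument is a Birkhoff-type decomposition: the balance equations reduce $(\lambda,\mu)$ to two transportation patterns sharing common marginals on a bipartite support, which can then be greedily edge-decomposed into pairs of matchings with matching vertex support. Once the lemma is in hand, dual infeasibility contradicts (2), so (2) forces primal feasibility.

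\textbf{Step 3: from face to facet.} The feasible $f$ defines a face $F_f := \{x \in \KRW{\metric} : \sum_i f_i x_i = 1\}$ containing every $p_{xy}$ for $(x,y)\in E$. Because $\metric_{xy}>0$ forces $f$ to be non-constant---hence nonzero as a functional on the affine hull $\sum_i x_i = 0$ of $\KRW{\metric}$---the face $F_f$ is proper, and therefore is contained in some facet $F$. This $F$ inherits all the prescribed vertices, giving (1). The main obstacle is the combinatorial decomposition inside Step 2: rewriting arbitrary balanced multipliers $(\lambda,\mu)$ as nonnegative combinations of the cyclic double-matching configurations of (2). Without this step, (2) would only witness the $0/1$-multiplier violations, and the Farkas equivalence would fall short; carrying the decomposition out, via the integrality of the transportation polytope or an explicit cycle-extraction procedure on the bipartite support, is where the technical work lies.
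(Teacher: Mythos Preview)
The paper does not include a proof of this statement; it is quoted as Theorem~3 of Gordon--Petrov and used as a black box, so there is no in-paper argument to compare against. Your reformulation via Lipschitz functions is the natural one, and Steps~1 and~3 are correct.

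The gap lies in Step~2. Your ``key combinatorial lemma''---that every balanced pair $(\lambda,\mu)$ decomposes nonnegatively into the elementary cyclic configurations of condition~(2)---is false as stated. Each elementary configuration has the multiset of $\lambda$-sources equal to that of $\mu$-sources (both are $\{x_1,\ldots,x_k\}$), and likewise for targets; hence so does any nonnegative combination. But a Farkas certificate need not share this property: with $E=\{(1,2),(2,1)\}$ and any metric with $\metric_{12}>\metric_{34}$, taking $\lambda$ to be unit mass on $(1,2)$ and on $(2,1)$ and $\mu$ to be unit mass on $(3,4)$ and on $(4,3)$ gives $\partial\lambda=\partial\mu=0$ and $\sum\lambda\metric>\sum\mu\metric$, yet the source multisets $\{1,2\}$ and $\{3,4\}$ differ, so no such decomposition exists. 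The correct route is the standard one for systems of difference constraints: feasibility of $\{f_y-f_x\le -\metric_{xy}:(x,y)\in E\}\cup\{f_i-f_j\le\metric_{ij}\}$ is equivalent to the absence of a negative-weight directed cycle in the associated arc-weighted digraph. In any such negative cycle the triangle inequality lets you shortcut each maximal run of positive arcs to a single arc without increasing the total weight, leaving an alternation of $E$-arcs $(a_i,b_i)$ and single positive arcs $b_i\to a_{i+1}$; if some $a_i=a_j$ (or $b_i=b_j$), splitting at the repetition yields two shorter alternating cycles whose weights sum to the original, so one remains negative. Iterating produces a violation of~(2) with distinct $x_i$'s and distinct $y_i$'s. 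Thus your plan can be completed, but the Birkhoff-style decomposition must be replaced by this cycle extraction, and the triangle inequality has to be invoked explicitly---Step~2 is not a purely combinatorial statement about balanced flows.
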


   \begin{definition} 
		A directed graph $G$ on the vertex set $[n]$ is called \emph{admissible} for an $n$-metric $\metric$, or $\metric$-admissible, if it satisfies one of the two equivalent conditions of \Cref{thm:gordon_petrov}. 

  \end{definition}
  
  In general, the pairwise distinctness of the $x_i$ and $y_i$ does not mean that the edges $(x_i,y_i)$ are vertex-disjoint. The following lemma addresses this distinction.

\begin{lemma}\label[lemma]{lem:disjoint}
Let $\rho$ be an $n$-pseudometric and let $G$ be a directed graph on the set $[n]$. The following are equivalent
\begin{itemize}
\item[(i)] $G$ satisfies \Cref{thm:gordon_petrov}.(2) 
\item[(ii)] $G$  satisfies the inequalities \eqref{eq:gp_inequality} for
\begin{itemize}
\item[(ii.1)] every array of directed edges of $G$ the form $(x,y),(y,z)$, and
\item[(ii.2)]  every array $(x_1,y_1),\ldots,(x_k,y_k)$ of vertex-disjoint directed edges of $G$, for all $k\geq 1$.
\end{itemize}
\end{itemize}
\end{lemma}
\begin{proof}
Clearly (i) implies (ii). For the other direction, we will say that a given array of directed edges $(x_1,y_1),\ldots,(x_k,y_k)$ is $m$-overlapping if $\vert\{x_1,\ldots,x_k\}\cap\{y_1,\ldots,y_k\}\vert = m$. Let $m\geq 1$,  assume that \eqref{eq:gp_inequality} holds for all arrays that are at most $m-1$ overlapping and
consider an $m$-overlapping array $(x_1,y_1),\ldots,(x_k,y_k)$. Since $m\geq 1$ and \eqref{eq:gp_inequality} is invariant under cyclic permutation of indices, without loss of generality we may suppose that $x_1=y_j$ for some $j>1$. If $k=2$, then the associated inequality holds by assumption (ii.1). Assume  $k>2$. If $j=2$, then
\begin{align*}
\metric_{x_1,y_1} + \metric_{x_2,x_1} + \metric_{x_3,y_3} + \ldots +\metric_{x_k,y_k} &\leq \metric_{x_2,y_1} + \metric_{x_3,y_3} + \ldots +\metric_{x_k,y_k}\\
 &\leq \metric_{x_2,y_3} + \metric_{x_3,y_4} + \ldots +\metric_{x_k,y_1} \\
 &= \metric_{x_1,x_1}+\metric_{x_2,y_3} + \ldots +\metric_{x_k,y_1} 
\end{align*}
(the first inequality by (ii.1), the second inequality by (ii.2) for the $m-1$-overlapping array $(x_2,y_1),(x_3,y_3),\ldots,(x_k,y_k)$, the last equality since $\metric_{x,x}=0$ for all $x$), and we obtain inequality \eqref{eq:gp_inequality} for the given array. The case $j=k-1$ is analogous, so consider the case $2<j<k-1$. Then, the triangle inequalities
$\metric_{x_{k},y_1} \leq \metric_{x_{k},x_1} + \metric_{x_1,y_1}$ and 
$\metric_{x_{j-1},y_1} \leq \metric_{x_{j-1},x_1}+  \metric_{x_1,y_1}$ are nontrivial, and substracting the first from the second gives
$\metric_{x_{j-1},y_1} - \metric_{x_{k},y_1} \leq  \metric_{x_{j-1},x_1} - \metric_{x_{k},x_1}$. Reordering and recalling that $x_1=y_j$, we have
\begin{equation}\label{eq:2tri}
\metric_{x_{j-1},y_1} + \metric_{x_{k},y_j} \leq \metric_{x_{j-1},y_j} + \metric_{x_{k},y_1}.
\end{equation}
Now consider the arrays $(x_1,y_1),\ldots,(x_{j-1},y_{j-1})$ and $(x_j,y_j),\ldots,(x_k,y_k)$: both are at most $(m-1)$-overlapping and thus we assume they satisfy \eqref{eq:gp_inequality}. Adding the inequalities \eqref{eq:gp_inequality} for these arrays and using \eqref{eq:2tri} we obtain
\begin{align*}
\sum_{i=1}^{j-1} \metric_{x_i,y_i} +
\sum_{i=j}^{k} \metric_{x_i,y_i}
&\leq 
\left[\sum_{i=1}^{j-2}\metric_{x_i,y_{i+1}} \right] + \metric_{x_{j-1},y_{1}}
\left[\sum_{i=j}^{k-1}\metric_{x_i,y_{i+1}} \right] + \metric_{x_{k},y_{j}}\\
&\leq 
\left[\sum_{i=1}^{j-2}\metric_{x_i,y_{i+1}} \right] + \metric_{x_{j-1},y_{j}}
\left[\sum_{i=j}^{k-1}\metric_{x_i,y_{i+1}} \right] + \metric_{x_{k},y_{1}}\\
\end{align*}
That is, \eqref{eq:gp_inequality} is satisfied by  any $m$-overlapping array $(x_1,y_1),\ldots,(x_k,y_k)$ if it is satisfied for all arrays of strictly smaller overlap. Since this holds for any $m\geq 1$ and since condition (ii.2) states that \eqref{eq:gp_inequality} holds for all $0$-overlapping arrays, the claim follows.
\end{proof}

\begin{lemma}\label[lemma]{lem:admissible_paths}
    A directed path on $n\geq 1$ edges with vertices $x_1, \dots, x_{n+1}$ is $\metric$-admissible (i.e., satisfies \Cref{thm:gordon_petrov}.(2)) if and only if 
    \begin{itemize}
    \item[(a)] all triangle inequalities of the form $\metric_{x_i, x_j} + \metric_{x_j, x_l} \geq \metric_{x_i, x_l}$ for $1\leq i \leq j \leq l \leq n+1$ are equalities and
    \item[(b)] all inequalities of the form~\eqref{eq:gp_inequality} hold for sets of vertex-disjoint edges in \\ $\{(x_1,x_2),(x_2,x_3),\dots,(x_n,x_{n+1})\}$.
    \end{itemize}
\end{lemma}
\begin{proof} 
If a directed path satisfies (a) and (b) with respect to $\metric$, then it satisfies condition (ii) in \Cref{lem:disjoint} (indeed, for directed paths (a) is equivalent to (ii.1) and (b) equivalent to (ii.2)), and thus it is admissible.

For the other implication note that if a directed path $x_1,\ldots,x_{n+1}$ is admissible, it satisfies all inequalities in \Cref{thm:gordon_petrov}.(2), and so it trivially satisfies condition (b).   We have to show that admissible directed paths satisfy (a). The case $n=1$ is trivial, thus 
 let $n\geq 2$, assume by induction hypotheses that all admissible paths with less than $n$ edges satisfy (a), and consider an admissible directed path $x_1,\ldots,x_{n+1}$.  
Fix a triple $1\leq i \leq j \leq l \leq n+1$. If $\vert l-i\vert <n$, condition (a) holds by induction hypothesis on either of the paths $x_1,\ldots,x_n$ and $x_2,\ldots,x_{n+1}$. Otherwise we have $i=1$, $l=n+1$ and we must show $\metric_{x_1,x_j}+\metric_{x_j,x_{n+1}}=\metric_{x_1,x_{n+1}}$, which is trivial if $j=1$ or $j=n+1$ and so we can assume without loss of generality that $1<j<l$. By (b), the inequality~\eqref{eq:gp_inequality} holds for the array  $(x_1,x_2),(x_{n},x_{n+1}),\ldots,(x_2,x_{3})$, i.e.,
$$
\metric_{x_1,x_2} + \metric_{x_2,x_3} \ldots +\metric_{x_{n},x_{n+1}}
\leq \metric_{x_1,x_n+1} +\metric_{x_2,x_2} + \ldots +\metric_{x_n,x_n} 
$$
Since $j\neq 1$ and $j\neq n+1$ the induction hypothesis applies to the paths $x_1,\ldots,x_j$ and $x_j,\ldots,x_{n+1}$ and hence we can rewrite the l.h.s. as $\metric_{x_1,x_j}+\metric_{x_j,x_{n+1}}$, and the r.h.s. trivially equals $\metric_{x_1,x_{n+1}}$ since $\metric$ is a metric.
The triangle inequality $\metric_{x_1,x_j}+\metric_{x_j,x_{n+1}}\ge \metric_{x_1,x_{n+1}}$ now implies the claimed equality and hence the path $x_1,\ldots,x_{n+1}$ satisfies condition (a).
\end{proof}

\begin{corollary}[{\cite{GordonPetrov} Corollary 1.(2)}]\label[corollary]{strict_nopaths} If $\metric$ is strict, $\metric$-admissible graphs do not contain any directed path of length greater than $1$.

  \end{corollary}
  \begin{proof}
  This is immediate from \Cref{lem:admissible_paths} since, by definition, for strict metrics all triangle inequalities are strict. 
  \end{proof}

\begin{definition}
Let $\metric$ be an $n$-metric. A {\em $\metric$-tight triple} is any triple $(i,j,k)$ of distinct elements in $[n]$ for which $\metric_{i,j}+\metric_{j,k}=\metric_{i,k}$.
\end{definition}

We can now give a description of admissibility of graphs that will be of use later. 

\begin{corollary}\label[corollary]{cor:disjointight}  
Let $\metric$ be an $n$-metric and let $G$ a directed graph on the vertex set $[n]$. Then $G$ is admissible if and only if 
\begin{itemize}
\item[(a)] it satisfies inequality \eqref{eq:gp_inequality} for all arrays of vertex-disjoint edges and
\item[(b)] if three elements $i,j,k\in [n]$ appear in this order as vertices in a directed path in $G$ then the triple $(i,j,k)$ is $\metric$-tight.
\end{itemize}
\end{corollary}
\begin{proof} If $G$ is admissible, then (a) holds by \Cref{lem:disjoint}.(ii.2) and (b) holds by \Cref{lem:admissible_paths} since every subgraph of $G$ is admissible as well. Now suppose that $G$ satisfies (a) and (b). We check that $G$ satisfies condition (ii) of  \Cref{lem:disjoint}, with (ii.2) being equivalent to (a). In order to check (ii.1) consider any array $(x,y),(y,z)$ of directed edges of $G$. Then $x,y,z$ are indeed three consecutive vertices in a directed path of $G$, and so by (b) the triple $x,y,z$ is $\metric$-tight. Since $\metric$ is a metric, tightness of $x,y,z$ is equivalent to the inequality $\metric_{x,y}+\metric_{y,z}\leq \metric_{x,z}$. Thus $(x,y),(y,z)$ satisfies (ii.1).
\end{proof}

    We close this section with a characterization of generic metric spaces (in the sense of \Cref{def:KRW}).
  
  \begin{theorem}[\cite{GordonPetrov}, Theorem 5]
      	A strict $n$-metric $\metric$ is generic if and only if for arbitrary $2k$ distinct points $x_1,\dots,x_k, y_1,\dots,y_k$ in $\left[n\right]$ the minimum of the terms \[\sum_{i = 1}^k \metric_{x_i, y_{\pi(i)}}\] is attained by a unique permutation $\pi\in S_k$.
  \end{theorem}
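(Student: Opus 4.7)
The plan is to establish the chain of equivalences ``$\KRW{\metric}$ is simplicial'' $\Leftrightarrow$ ``every admissible graph is a forest'' $\Leftrightarrow$ ``the generic condition of the statement''. Throughout I will use that, for strict $\metric$, admissible graphs are bipartite (\Cref{strict_nopaths}) with vertices partitioned into sources, sinks, and isolated points.

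For the first equivalence I need to compare admissible graphs containing a bipartite cycle with admissible forests. Given a bipartite cycle $x_1 y_1 x_2 y_2 \cdots x_k y_k x_1$ inside an admissible graph, the telescoping identity
\[
\sum_{i=1}^k (e_{x_i} - e_{y_i}) \;-\; \sum_{i=1}^k (e_{x_{i+1}} - e_{y_i}) = 0
\]
rewrites as a linear relation $\sum_i \metric_{x_i y_i}\, p_{x_i y_i} - \sum_i \metric_{x_{i+1} y_i}\, p_{x_{i+1} y_i} = 0$ whose coefficient sum equals the difference in weight between the two alternating perfect matchings $M_1, M_2$ on the cycle vertices. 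Applying \Cref{thm:gordon_petrov} to $M_1$ and to $M_2$ separately (and decomposing arbitrary permutations into cycles to lift the cyclic-shift inequality into the full minimum-weight-matching inequality) forces these two weights to be equal. The coefficient sum then vanishes, upgrading the linear relation to an \emph{affine} dependence among the corresponding facet vertices, so the facet cannot be a simplex. Conversely, if the undirected graph underlying an admissible edge set $E$ is a forest then $\{p_{ij}\}_{(i,j)\in E}$ are linearly (hence affinely) independent by the standard edge-vector criterion for the type-$A$ root system, so the corresponding facet is a simplex. Downward closure of admissibility under subgraphs then yields ``$\KRW{\metric}$ simplicial $\Leftrightarrow$ no admissible graph contains a cycle''. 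The easy direction ``admissible cycle $\Rightarrow$ generic condition fails'' drops out of the same computation: the distinct matchings $M_1, M_2$ both attain the minimum weight.

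The main obstacle will be the reverse direction: non-unique minimum implies the existence of an admissible cycle, which I plan to prove via LP duality. The assignment LP
\[
\min \sum_{i,j} \metric_{x_i y_j}\, z_{ij}
\]
over doubly stochastic matrices $z$ has dual $\max \sum_i u_i + \sum_j v_j$ subject to $u_i + v_j \le \metric_{x_i y_j}$; by total unimodularity the primal optimum coincides with the minimum-weight matching. If two permutations $\pi_1 \neq \pi_2$ both minimize, complementary slackness forces the corresponding matchings $M_\ell = \{(x_i, y_{\pi_\ell(i)})\}$ to lie in the tight-edge set $T = \{(i,j) : u_i + v_j = \metric_{x_i y_j}\}$ for any dual-optimal $(u,v)$. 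For any sub-matching $M \subseteq T$ on a vertex subset $A \cup B$, the identity $\sum_{(i,j) \in M} \metric_{x_i y_j} = \sum_{i \in A} u_i + \sum_{j \in B} v_j$, combined with the feasibility of the restriction $(u|_A, v|_B)$ in the dual LP for the sub-instance, shows that $M$ attains the minimum-weight matching value on $A \cup B$; this is precisely the Gordon--Petrov admissibility condition applied to $M$. Hence $M_1 \cup M_2$ is admissible; since $\pi_1 \neq \pi_2$, the symmetric difference of $M_1, M_2$ is a nonempty disjoint union of alternating cycles, so $M_1 \cup M_2$ contains the required cycle.
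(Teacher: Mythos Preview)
The paper does not contain a proof of this statement; it is quoted verbatim from \cite{GordonPetrov} as background (Theorem~5 there) and used without reproof. So there is no ``paper's own proof'' to compare against.

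That said, your argument is correct and is essentially the natural one. The chain ``simplicial $\Leftrightarrow$ every admissible graph is a forest $\Leftrightarrow$ unique-minimum condition'' is the right decomposition, and each step is sound. The affine-dependence computation for a bipartite cycle, together with applying condition~(2) of \Cref{thm:gordon_petrov} in both cyclic directions to force the two matching weights equal, is exactly what is needed for the first equivalence and for the easy direction of the second. For the hard direction, the LP-duality argument is clean: the key point---that any sub-matching contained in the tight-edge set of a dual optimum automatically realises the minimum-weight matching on its own support, and hence satisfies the cyclic-shift inequality of \Cref{thm:gordon_petrov} for every sub-array---is precisely the admissibility certificate for $M_1\cup M_2$. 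One cosmetic suggestion: make explicit that an ``array of directed edges with distinct sources and distinct sinks'' inside $M_1\cup M_2$ is nothing other than a sub-matching of the tight-edge set $T$; this is the hinge on which your verification of condition~(2) turns, and stating it plainly will spare the reader a moment's pause.
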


  \subsection{Polyhedra and their combinatorial type}

  We refer to \cite{ZieglerLoP} for basics and terminology about polyhedra and only recall some fundamental definitions. A \emph{polyhedron} is any subset of Euclidean space that is obtained by intersecting a finite number of closed half-spaces. A \emph{polytope} is a bounded polyhedron -- equivalently, the convex hull of a finite set of points. Every polyhedron is topologically closed. The \emph{affine hull} of a polytope $P$ is the smallest affine subspace containing $P$, and the relative interior of $P$ is the interior of $P$ as a subset of its affine hull. If $P$ is any convex set in Euclidean space, we call {\em face} of $P$ any intersection of $P$ with the boundary of a closed half-space containing $P$.
  
  A  polyhedral complex is a family $\mathscr{K}$ of polyhedra (all in the same ambient space) that contains the empty polyhedron and all faces of each of its members, and such the intersection of any two members of $\mathscr{K}$ is a face of both. The {\em face poset} of $\mathscr{K}$ is the set $\mathscr{K}$ partially ordered by inclusion. 

\begin{definition}[Labeled and unlabeled combinatorial type of polyhedral complexes]
  We say that two finite polyhedral complexes $\mathscr{K},\mathscr{K'}$ {\em have the same combinatorial type} (or ``are combinatorially isomorphic", written $\mathscr{K}\simeq \mathscr{K'}$) if there is an abstract polyhedral isomorphism (i.e., a face-preserving PL-homeomorphism) between them. If some or all of the cells of $\mathscr{K}$ and $\mathscr{K'}$ are labeled using the same set of labels, and there is an abstract polyhedral isomorphism that restricts to a label-preserving bijection on the set of labeled faces, then $\mathscr{K}$ and $\mathscr{K'}$ have {\em the same ``labeled" combinatorial type}, and we express this as $\mathscr{K}\simeq_\ell \mathscr{K'}$.

  Note that $\mathscr{K}\simeq \mathscr{K'}$ if and only if the posets of faces of $\mathscr{K}$ and $\mathscr{K'}$ are isomorphic \cite[\S 2.2]{ZieglerLoP}, and $\mathscr{K}\simeq_\ell \mathscr{K'}$ is equivalent to the existence of such an isomorphism of posets of faces that preserves the labels. 
  \end{definition}

  \begin{remark}[On the term {\em Combinatorial structure}]\label[remark]{cts} 
  If $\metric$ is an $n$-metric, the polytope $\KRW{\metric}$ together with all its faces defines a polyhedral complex, whose vertices are naturally labeled by ordered pairs of distinct elements of $[n]$ (i.e., if $p_{ij}$ is a vertex of $\KRW{\metric}$, it is labeled by $(i,j)$). Note that the ``combinatorial structure" in the sense of Gordon and Petrov, see \Cref{sec:GP}, is equivalent to the labeled combinatorial type. Indeed, by \Cref{thm:gordon_petrov} the KRW polytopes of two metrics $\metric_1$, $\metric_2$ have the same labeled combinatorial type if and only if the set of admissible labeled graphs for $\rho_1$ coincides with the set of admissible labeled graphs for $\metric_2$, where the labeling of the graphs is induced by the labeling of the edges $(i,j)$, which are in one-to-one correspondence with the vertices $p_{ij}$.
  \end{remark}

\begin{remark}[Cones and fans]\label[remark]{remcone}

We define a {\em cone} as any subset $\mathbf{C}\subseteq \mathbb R^d$ that contains every linear combination with non-negative coefficients of any of its finite subsets, see \cite[\S 1.1]{ZieglerLoP}.  We call {\em polyhedral cone} any polyhedron 
that is also a cone.

A {\em half-open} polyhedral cone is a polyhedral cone from which the relative interior of some of its boundary faces has been  removed -- thus the

metric cone $\mcone{n}$ is half-open, since it can be obtained from the (polyhedral) $\mconeclosure{n}$ by removing all the faces intersecting some coordinate hyperplane. 

A {\em polyhedral fan} is any polyhedral complex all of whose members are polyhedral cones, see \cite[\S 7.1]{ZieglerLoP}.
We will need to consider families of possibly half-open polyhedral cones that contain all faces of each of their members and such that the intersection of any two of its members is a face of both: we will call this a {\em half-open fan}. We will simply use the word {\em fan} if its type is clear from the context. The {\em support} of any fan is, by definition, the union of its members. By a subfan of a (half-open) fan $\mathscr X$ we will mean a set of cones of $\mathscr X$ that contains all of its boundary faces that are members of $\mathscr X$ (i.e., any $\mathscr Y\subseteq \mathscr X$ such that, for every $y\in \mathscr Y$, if a face of $y$ is a member of $\mathscr X$ then it is also a member of $\mathscr Y$). Note that a set of members of a (half-open) fan is a subfan if its support is closed inside the support of the bigger fan.
This allows us, e.g., to speak of the ``Wasserstein fan" $\GPF{n}$ in \Cref{defWF}. 
\end{remark}

\begin{definition}\label{def:type_fan_KRW}
Let $n\ge 2$ be an integer.
We write $\Tn{n}$ for the \emph{type fan of KRW~polytopes}, i.e., the fan supported on the metric cone $\mcone{n}$ such that two metrics  $\metric^{(1)},\metric^{(2)}$ are in the open part of the same cone of $\Tn{n}$ if and only if the the KRW polytopes $\KRW{\metric^{(1)}}$ and $\KRW{\metric^{(2)}}$ have the same labeled combinatorial type (equivalently, the same set of admissible directed graphs).
\end{definition}

Given a real $m\times n$ matrix $A$, McMullen studied the space of all polytopes $Ax\le b$ for $b\in \R^n$ ~\cite{McMullen}.
Hence all normal vectors of these polytopes appear as rows of $A$.
McMullen proved that there is a polyhedral cone, the so-called \emph{inner region}, where all rows correspond to irredundant linear conditions.
Another result of McMullen states that this cone is subdivided into \emph{type cones} where the interior of each cone corresponds to a fixed labeled combinatorial type of polytopes.
The collection of these type cones is the \emph{type fan}.

In the case that the rows of $A$ are $e_i-e_j$ for all $i\neq j$, this yields the \emph{type fan of alcoved polytopes}, an actively studied object in its own right~\cite{EKM25,BSS26}.
Restricting the corresponding inner region to the metric cone yields the type fan of (duals of) KRW polytopes which explains our terminology introduced above.

\begin{remark}
According to~\Cref{thm:gordon_petrov}, the region within the metric cone consisting of the metrics whose KRW polytopes have a fixed combinatorial type is cut out by a collection of linear inequalities.
This shows that $\Tn{n}$ is indeed a fan, and the relatively open part of any of its cones is the relative interior of a polyhedral cone.
\end{remark}

\section{The Wasserstein arrangement and the Wasserstein fan}
\label{sec:WA}

We introduce an arrangement of hyperplanes whose faces encode the combinatorial structures of KRW polytopes.

\begin{definition}[Wasserstein arrangement]
    Let $n$ be a positive integer. We define a hyperplane arrangement in $\R^{\binom{n}{2}}$ as follows. 
    Given $1<k$ and $k$-tuples $\mathbf{a}=(a_1,\ldots,a_k)$, $\mathbf{b}=(b_1,\ldots,b_k)$ with $a_i,b_i\in[n]$ for all $i=1,\ldots,k$, define a hyperplane 
    
    $$
    H_{\mathbf{a,b}} :=\left\{x\in \R^{\binom{n}{2}}\left\vert  \sum_{i=1} ^kx_{\{a_i,b_i\}} = \sum_{i=1}^k x_{\{a_i,b_{i+1}\}}\right.\right\}
    \quad \textrm{where }b_{k+1}=b_1.
    $$
    The ``non-negative side'' of $H_{\mathbf{a,b}}$ is
        $$
    H_{\mathbf{a,b}}^+ :=\left\{x\in \R^{\binom{n}{2}}\left\vert \sum_{i=1}^{k} x_{\{a_i,b_i\}} \leq \sum_{i=1}^{k} x_{\{a_i,b_{i+1}\}}\right.\right\}.
    $$
    The \emph{Wasserstein arrangement} is then the set of hyperplanes
    $$
    \CA{n}:=\left\{H_{\mathbf{a,b}}  \left\vert
    \begin{array}{l}
    1<k\leq n,\,\, \mathbf{a,b}\in [n]^k\\
    a_1,\ldots, a_k,b_1,\ldots, b_k \textrm{ mutually distinct}
    \end{array}
    \right.\right\}.
    $$
\end{definition}
\begin{remark}
    In the case $ k = 2$, the tuple $(a_1,a_2)$ forms the same hyperplane when combined with either of the tuples $(b_1, b_2)$ or $(b_2, b_1)$. In order for $H_{\mathbf{a,b}}^+$ to be well-defined, we establish the convention that we chose the tuple starting with $\min\{b_1,b_2\}$. 
\end{remark}
\begin{definition}[Wasserstein fan]\label[definition]{defWF} 
Let $\GPF{n}$ denote the fan determined by the intersection of the Wasserstein arrangement $\CA{n}$ with the metric cone $\mcone{n}$. 
	We call $\GPF{n}$ the \emph{Wasserstein fan}.
	In addition, write $\GPFclosed{n}$ for the polyhedral fan of pseudometrics defined by the intersections of $\CA{n}$ with all faces of the cone $\mconeclosure{n}$.
\end{definition}

\begin{remark}\label{FanArr}

  	The metric cone $\mcone{n}$ intersects every face of the Wasserstein arrangement~$\CA{n}$.
    Indeed, let $p$ be any point on some face of $\CA{n}$.
    The point $p+k\mathbf{1}$ for some $k\ge 0$ and $\mathbf{1}$ the all-ones vector in $\R^{\binom{n}{2}}$ lies on exactly the same hyperplanes of the arrangement $\CA{n}$ as the point $p$ since the defining equations involve the same number of variables on both sides of the equation.
    But for $k$ large enough, we observe that the point $p+k\mathbf{1}$ is in $\mcone{n}$. For the analogous statement about the secondary fan of the second hypersimplex see \cite[Lemma 24]{CJK24}.
    
    Therefore,  the arrangement $\CA{n}$ and the fan $\GPF{n}$ have the same number of $i$-cells for any~$i$.
    In particular, we can count the number of maximal cells of the fan $\GPF{n}$ by counting the number of chambers of the arrangement $\CA{n}$.
\end{remark}

\begin{example}
	The Wasserstein arrangement $\CA{4}$ in $\R^{\binom{4}{2}}$ consists of the three hyperplanes given by the equations
\begin{align*}
	H_{(1,2),(3,4)}:& \quad   x_{\{1,3\}} + x_{\{2,4\}}=x_{\{1,4\}} + x_{\{2,3\}},\\
	H_{(1,3),(2,4)}:& \quad   x_{\{1,2\}} + x_{\{3,4\}}=x_{\{1,4\}} + x_{\{2,3\}},\\
	H_{(1,4),(2,3)}:& \quad   x_{\{1,2\}} + x_{\{3,4\}}=x_{\{1,3\}} + x_{\{2,4\}}.
\end{align*}
This arrangement has a $4$-dimensional lineality space, see~\Cref{lem:lineality_space}. \Cref{fig:arrangement4} represents
$\CA{4}$ modulo its lineality space.
\end{example}

\begin{remark}\label[remark]{rem:cycle_arr}
Each hyperplane $H_{\mathbf{a,b}}$ corresponds to a cycle $C_{H_{\mathbf{a,b}}}$ of the complete graph $K_n$ determined by the sequence of vertices $a_k,b_k,\ldots,a_1,b_1$ (see \Cref{fig:cycle_example} for an example with  $k=3$). Write $C_{H_{\mathbf{a,b}}}^+:=\{(a_i, b_i)_{1\leq i\leq k}\}$ and $C_{H_{\mathbf{a,b}}}^-:=\{(a_i, b_{i+1})_{1\leq i\leq k}\}$. 
The edges in the cycle have alternating orientations (all vertices have indegree either 0 or 2). 

Whether the sets $C_{H_{\mathbf{a,b}}}^+$ and $C_{H_{\mathbf{a,b}}}^-$ are admissible for a given metric depends on which side of $H_{\mathbf{a,b}}$ the metric lies.

\begin{figure}[hbt]
	\includegraphics[width=0.25\textwidth]{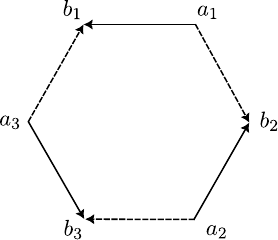} 
	\caption{A cycle defining a hyperplane $H_{\mathbf{a,b}}$. The edges in $C_{H_{\mathbf{a,b}}}^+$ and $C_{H_{\mathbf{a,b}}}^-$ correspond to solid and dashed arrows, respectively.}
	\label{fig:cycle_example}
\end{figure}

\end{remark}

\begin{definition} \label[definition]{def:2k_cycle_planes}
    For $k \in \mathbb{N}$, denote by \[\CA{n}^k := \left\{H_{\mathbf{a,b}} \left\vert
    \begin{array}{l}
     a,b\in [n]^k\\
    a_1,\ldots, a_k,b_1,\ldots, b_k \textrm{ mutually distinct}
    \end{array}
    \right.\right\}\]
the set of hyperplanes in $\CA{n}$ corresponding to cycles of length $2k$ in the complete graph $K_n$.
\end{definition}

\begin{proposition}\label[proposition]{prop:size_wn}
	The arrangement $\CA{n}$ consists of
	\[
	|\CA{n}|= |\bigsqcup_{k =2}^{\lfloor \frac{n}{2}\rfloor} \CA{n}^k| = \frac{1}{2}\sum_{k=2}^{\lfloor \frac{n}{2}\rfloor} \binom{n}{2k}(2k-1)!
	\]
	many hyperplanes.
\end{proposition}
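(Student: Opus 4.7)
The plan is to biject $\CA{n}$ with the set of cycles of even length at least $4$ in the complete graph $K_n$, and then count the latter.

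By \Cref{rem:cycle_arr}, every hyperplane $H_{\mathbf{a,b}} \in \CA{n}$ determines a cycle $C_{H_{\mathbf{a,b}}}$ of length $2k$ on the vertex set $\{a_1,\ldots,a_k,b_1,\ldots,b_k\} \subseteq [n]$, whose edges alternate between the ``positive'' edges $\{a_i,b_i\}$ and the ``negative'' edges $\{a_i,b_{i+1}\}$. I would next verify that the hyperplane is determined by this underlying cycle alone. Its defining linear form has coefficient vector in $\{-1,0,1\}^{\binom{n}{2}}$ whose support is exactly the edge set of $C_{H_{\mathbf{a,b}}}$ and whose sign pattern alternates along the cycle. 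Two such linear forms cut out the same hyperplane iff they are proportional, and since their entries lie in $\{-1,0,1\}$ this proportionality forces equality up to an overall sign. A global sign flip merely interchanges the positive and negative color classes along the cycle, so the underlying edge set is unchanged. Hence distinct cycles yield distinct hyperplanes.

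Conversely, every $2k$-cycle $C$ in $K_n$ with $2 \leq k \leq \lfloor n/2\rfloor$ arises as some $C_{H_{\mathbf{a,b}}}$: being a bipartite graph on $2k$ vertices, $C$ admits a proper $2$-edge-coloring (unique up to swapping the two colors) in which consecutive edges receive different colors, and either such coloring furnishes an admissible pair of $k$-tuples $(\mathbf{a},\mathbf{b})$. Therefore $\CA{n}$ is in bijection with the disjoint union, over $k \in \{2,\ldots,\lfloor n/2\rfloor\}$, of the sets of $2k$-cycles in $K_n$.

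It remains to count $2k$-cycles in $K_n$: one selects the $2k$-vertex set in $\binom{n}{2k}$ ways, and then arranges those vertices into a Hamiltonian cycle of $K_{2k}$, of which there are the standard $(2k-1)!/2$ (cyclic orderings modulo rotation and reflection). Summing yields
\[
|\CA{n}| \;=\; \sum_{k=2}^{\lfloor n/2 \rfloor} \binom{n}{2k} \cdot \frac{(2k-1)!}{2} \;=\; \frac{1}{2}\sum_{k=2}^{\lfloor n/2 \rfloor} \binom{n}{2k}\,(2k-1)!,
\]
as claimed. The only subtle step is the uniqueness assertion in the bijection; however, this follows immediately from the normalized $\{-1,0,1\}$-form of the coefficient vectors, so no substantive obstacle remains.
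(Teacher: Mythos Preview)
Your proof is correct and follows essentially the same approach as the paper: biject the hyperplanes of $\CA{n}$ with even cycles of length at least $4$ in $K_n$ (via \Cref{rem:cycle_arr}) and then count these as $\binom{n}{2k}\cdot (2k-1)!/2$. The paper's argument is terser and phrases the factor $\tfrac12$ as identifying oriented cycles that differ by reversal, whereas you justify injectivity more explicitly through the $\{-1,0,1\}$-normalization of the defining linear forms; both are fine.
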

\begin{proof}
	As discussed in \Cref{rem:cycle_arr}, each hyperplane in $\CA{n}$ is determined by an oriented cycle in in the complete graph $K_n$ of even length at least four.
	Such cycles arise by choosing $2k$ vertices amongst the $n$ vertices and then choosing a cyclic permutation of length $2k$.
	There are $(2k-1)!$ such permutations.
	As reversing the orientation of the cycle yields the same hyperplane, we count every hyperplane exactly twice.
\end{proof}

\begin{remark} \label{rem:wassersteinarr}

 In order to study the combinatorial types of strict metrics, it suffices to consider the intersection of the interior of $\mcone{n}$ with $\CA{n}$, since by \Cref{strict_nopaths} admissible graphs for strict metrics cannot contain directed paths with more than one edge.
\end{remark}

\begin{definition}\label[definition]{elem_splits}
Recall \Cref{defsplits}. The pseudometrics of the form $\splitpm{\{i\}}{[n]\setminus \{i\}}\in \R^{\binom{n}{2}}$ for $i=1,\ldots,n$ are called {\em elementary splits}. 

Let $W$ denote the linear span of the elementary splits and let  $\elsplitsclosed{n}:=\mconeclosure{n}\cap W$ be the set of all pseudometrics that are linear combination of elementary splits. 
Accordingly, we write $\elsplits{n}:=\mcone{n}\cap W$ for the cone of metrics that are combinations of elementary splits.
\end{definition}
\begin{remark} \label[remark]{rem_dimW}
The set of elementary splits is ``compatible'' in the sense of \cite{BandeltDress} (e.g., because the metric obtained as the sum of all elementary splits is represented by a tree with $n$ leaves, each at distance $1$ from a unique inner vertex), and thus linearly independent by \cite[Corollary 4]{BandeltDress}. Therefore, $\dim(W)=n$ and  $\elsplitsclosed{n}$ is an $n$-dimensional simplicial cone given as the convex hull of the rays generated by the elementary splits. 
\end{remark}

\begin{lemma}\label[lemma]{lem:lineality_space}
The lineality space of the Wasserstein arrangement $\CA{n}$ is the $n$-dimensional subspace $W$ spanned by the elementary splits (i.e., $W=\bigcap \CA{n}$).
\end{lemma}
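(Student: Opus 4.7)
The plan is to establish the two containments $W\subseteq\bigcap\CA{n}$ and $\bigcap\CA{n}\subseteq W$ separately. For the first, I would take an arbitrary elementary split $\splitpm{\{v\}}{[n]\setminus\{v\}}$ and verify by direct calculation that it lies on every hyperplane $H_{\mathbf{a},\mathbf{b}}\in\CA{n}$. Evaluating such a split at an index $\{p,q\}$ gives $1$ if $v\in\{p,q\}$ and $0$ otherwise. Since the $2k$ entries $a_1,\dots,a_k,b_1,\dots,b_k$ are pairwise distinct, the element $v$ appears among them at most once; a brief case analysis (``$v$ absent'', ``$v=a_j$ for some $j$'', ``$v=b_j$ for some $j$'') shows that the two sides of the defining equation of $H_{\mathbf{a},\mathbf{b}}$ contribute identical totals (both $0$ or both $1$). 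By linearity, $W$ is contained in every hyperplane of $\CA{n}$.

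For the reverse inclusion I would observe that it suffices to intersect only the ``four-point'' hyperplanes $H_{\mathbf{a},\mathbf{b}}$ with $k=2$, so the larger-$k$ hyperplanes cannot cut the lineality space any further. Assume $n\geq 4$ and let $x\in\R^{\binom{n}{2}}$ satisfy every four-point equation $x_{\{i,j\}}+x_{\{k,l\}}=x_{\{i,k\}}+x_{\{j,l\}}$ for pairwise distinct $i,j,k,l$. My goal is to write $x=\psi(y)$ for some $y\in\R^n$, where $\psi\colon\R^n\to\R^{\binom{n}{2}}$ is the linear map defined by $\psi(y)_{\{i,j\}}=y_i+y_j$. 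Since $\psi(e_v)=\splitpm{\{v\}}{[n]\setminus\{v\}}$, the image of $\psi$ coincides with $W$. Concretely, I fix two reference indices $a,b\in[n]$ and set
\[
y_i:=\tfrac{1}{2}\bigl(x_{\{a,i\}}+x_{\{b,i\}}-x_{\{a,b\}}\bigr)\quad\text{for }i\notin\{a,b\},
\]
and define $y_a,y_b$ by analogous formulas using an auxiliary third index. Four-point equations applied to the appropriate quadruples guarantee both that $y_a$ and $y_b$ are well-defined (independent of the auxiliary choice) and that the identity $x_{\{i,j\}}=y_i+y_j$ holds for every pair $\{i,j\}$. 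The small cases $n\leq 3$ are trivial because $\CA{n}$ is empty and $W=\R^{\binom{n}{2}}$.

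The main effort will go into the second step, specifically in the case analysis (depending on which of $i,j$ belong to $\{a,b\}$) used to check the two properties of $y$ from only the four-point relations. None of these cases is conceptually difficult, but all must be carried out carefully to conclude $x\in W$ and thereby complete the proof.
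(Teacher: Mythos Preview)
Your proposal is correct. The inclusion $W\subseteq\bigcap\CA{n}$ is handled essentially as in the paper (the paper phrases it as ``adding $\lambda\cdot\splitpm{\{i\}}{[n]\setminus\{i\}}$ contributes equally to both sides of the defining equation'', which is your case analysis in compressed form).

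For the reverse inclusion your argument is genuinely different from the paper's. The paper does not attempt to show directly that every point of $\bigcap\CA{n}$ lies in $W$; instead it argues by dimension. It exhibits $\binom{n}{2}-n$ specific hyperplanes of $\CA{n}$ (indexed by the diagonals of a regular $n$-gon, namely $H_{(i,j+1),(j,i+1)}$ for non-adjacent $i,j$) and shows, via a block upper-triangular matrix computation, that their normal vectors are linearly independent. This forces $\dim\bigl(\bigcap\CA{n}\bigr)\leq n=\dim W$, and the already-established inclusion $W\subseteq\bigcap\CA{n}$ finishes the proof. Your route is more constructive: using only the $k=2$ hyperplanes you recover, via a Gromov-product style formula $y_i=\tfrac12(x_{\{a,i\}}+x_{\{b,i\}}-x_{\{a,b\}})$, an explicit $y\in\R^n$ with $\psi(y)=x$. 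This is shorter, avoids the matrix argument, and makes transparent that the four-point relations alone already force membership in $W$. The paper's approach, on the other hand, singles out a concrete basis of normals inside $\CA{n}$, which is a slightly stronger structural statement and may be useful elsewhere. Either method is perfectly adequate here.
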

\begin{proof}
Let $L:=\bigcap \CA{n}$.
We first prove that $W\subseteq L$.
If a metric $\metric$ is in a hyperplane $H_{\mathbf{a,b}}$ and we define \[\metric' := \metric +\lambda \cdot \splitpm{\{i\}}{[n]\setminus \{i\}}\in \R^{\binom{n}{2}}\] for $\lambda > 0$ and some $i$, then the $\lambda$ appears in  an equal number of summands on both sides of the defining equation of $H_{\mathbf{a,b}}$ and therefore, $\metric'\in H_{\mathbf{a,b}}$. Thus, $W$ is contained in $H_{\mathbf{a,b}}$ for all $\mathbf{a}$ and $\mathbf{b}$ and is thus contained in the lineality space of the arrangement $\CA{n}$.

Now it is enough to prove that $\dim(W)\geq\dim(L)$. Since $\dim(W)=n$ (\Cref{rem_dimW}), it is enough to find ${\binom{n}{2}} - n$ hyperplanes in $\CA{n}$ whose normal vectors are linearly independent. 

For this, identify the elements of the metric space $[n]$ with the vertices of a regular $n$-gon and consider the set $D$ of all pairs of distinct elements of $[n]$ that define a diagonal (i.e., not an edge) of the $n$-gon. Thus $\vert D\vert = \binom{n}{2}-n$. For $\{i,j\}\in D$ consider
$
H_{(i,j+1),(j,i+1)}
$ (we understand the sum modulo $n$, so that $n+1=1$), with normal vector
$v_{ij}= e_{\{i,j\}} + e_{\{i+1,j+1\}} - e_{\{i,i+1\}} - e_{\{j,j+1\}}$.

We claim that the set $\{v_{ij} \mid \{i,j\} \in D\}$ is a linearly independent family. In order to do so, we arrange these vectors as columns of a matrix.
Consider the following total ordering on the set of all pairs of distinct elements of  $[n]$: 
\begin{align*}
\{1,2\}\prec \{2,3\}\prec \dots\prec \{n,1\}\prec \{1,3\}\prec \{2,4\}\prec \dots \prec  \{n,2\}\prec \{1,4\}\prec\{2,5\}\prec\dots
\end{align*}
Identifying sides and diagonals of a regular $n$-gon by the associated (unordered) pairs of indices this corresponds to ordering these segments in increasing order of length and, for sides and diagonals of the same length $k$, in the cyclic order starting with the diagonal $\{1,k\}$.

Let $A$ be the $\binom{n}{2} \times \vert D \vert$-matrix whose columns are the vectors $v_{ij}$ listed in increasing $\prec$-order of $\{i,j\}$, and where the $k$-th row corresponds to the coordinate $e_{\{i,j\}}$ with $\{i,j\}$ in $k$-th place with respect to the ordering $\prec$. Then $A$ has the following block-form:
$$
A =\left(\begin{array}{c|c|c|c}
W_1 & W_2 & W_3 & \cdots \\\hline
A_{1} & 0 & 0 & \cdots \\
 0 & A_2 & 0 & \cdots \\
  0 & 0 & A_{3} & \cdots \\
\vdots &  & & \ddots \\
\end{array}\right);\quad
\textrm{ with }
A_j = \begin{pmatrix}
1 & 0 & 0 & \cdots \\
1 & 1 & 0 & 0 & \cdots \\
0 & 1 & 1 & 0 & \cdots \\
0 & 0 & 1 & 1 & \ddots \\
\vdots &  & \ddots & \ddots & \ddots
\end{pmatrix},
$$
and the matrices $W_i$ have $n$ rows.
Since clearly each $A_i$ has full rank, so does $A$. 
This proves that the $v_{ij}$ are linearly independent and thus that
\[
\dim(L) \leq \dim\left(\bigcap_{\{i,j\}\in D} H_{\{i,j\},\{i+1,j+1\}}\right) = {\binom{n}{2}}- \vert D \vert =n.\qedhere
\]
\end{proof}

The following proposition relates the combinatorial structure of KRW polytopes with the face structure of the Wasserstein fan.

\begin{theorem}\label{prop:refinement2}
Let $n\ge 2$ be an integer and consider two $n$-metrics $\metric^{(1)},\metric^{(2)}$. If  $\metric^{(1)},\metric^{(2)}$ lie in the open part of one cone of the fan $\GPFclosed{n}$, then the labeled combinatorial type of the polytopes $\KRW{\metric^{(1)}}$ and $\KRW{\metric^{(2)}}$ are the same.
Hence the Wasserstein fan $\GPFclosed{n}$ refines the type fan of KRW polytopes $\Tn{n}$.
\end{theorem}
\begin{proof}
As noted in \Cref{cts} the labeled combinatorial type of $\KRW{\metric}$ is determined by the collection of $\metric$-admissible graphs. By \Cref{cor:disjointight}, the collection of $\metric$-admissible graphs is determined by two conditions: the first of these (\Cref{cor:disjointight}.(a)) is equivalent to checking which cone of $\GPF{n}$ contains $\metric$ (i.e., which face of the arrangement $\CA{n}$ contains $\metric$), and (\Cref{cor:disjointight}.(b)) is equivalent to ascertaining in which boundary face of $\mconeclosure{n}$ the metric $\metric$ is contained.
\end{proof}

\begin{corollary}
If the metrics $\metric^{(1)},\metric^{(2)}$ are strict and lie in the open part of one cone of the Wasserstein fan $\GPF{n}$, then the labeled combinatorial type of the polytopes $\KRW{\metric^{(1)}}$ and $\KRW{\metric^{(2)}}$ are the same.
\end{corollary}
\begin{proof} For strict metrics condition (b) of \Cref{cor:disjointight} is void, and the remaining condition (a) of \Cref{cor:disjointight} amounts to checking membership in cones of $\GPF{n}$, as in the proof of \Cref{prop:refinement2}.
\end{proof}

We study the relationship between these two fans further.

\begin{proposition}\label[proposition]{prop:refinement}
Let $n\ge 2$ and let  $\CA{n}^4$ denote  the subset of $\CA{n}$ consisting of all hyperplanes of the form $H_{(i,j),(k,l)}$ for distinct $i,j,k,l\in [n]$. 
	\begin{enumerate}
        \item For every $H\in \CA{n}^4$, the set $H\cap \mcone{n}$ is a union of cones of the type fan~$\Tn{n}$. 
		\item For $n\le 5$, the Wasserstein fan $\GPF{n}$ agrees with the type fan $\Tn{n}$.
		\item For $n\ge 6$, the Wasserstein fan $\GPF{n}$ is a strict refinement of the type fan $\Tn{n}$.
	\end{enumerate}
\end{proposition}

\begin{proof}

Let $\metric$ be an $n$-metric, fix distinct $i,j,k,l\in [n]$ and consider the directed graphs $G^+$, $G^-$ on the vertex set $[n]$ with edge sets, respectively, $\{(i,j),(k,l)\}$ and $\{(i,l),(k,j)\}$. By \Cref{thm:gordon_petrov}, $G^+$ is admissible if and only if $\metric_{ij}+\metric_{kl}\le \metric_{ik}+\metric_{jl}$ (i.e., $\rho\in H_{(i,j),(k,l)}^+$), and  $G^-$ is admissible if and only if $\metric_{ij}+\metric_{kl}\ge \metric_{ik}+\metric_{jl}$ (i.e., $\rho\in H_{(i,j),(k,l)}^+$). Moreover, for $\epsilon=+,-$ the graph $G^\epsilon$ is admissible for $\rho$ if and only if it is admissible for every metric whose KRW polytope has the same labeled combinatorial type as $\KRW{\rho}$ (see, e.g., \Cref{cts}). Thus, the open cone of $\Tn{n}$ containing $\rho$ is either contained in $H_{(i,j),(k,l)}$ (in case both $G^+$ and $G^-$ are admissible) or disjoint from it. This proves (1).

For $n\le 6$ the statements (2) and (3) follow from~\Cref{prop:refinement2} and~\Cref{sim_table}. In order to complete the proof of (3), we let $n\ge 6$ and consider the triple of edges $T = \{(1,2),(3,4),(5,6)\}$.
According to \Cref{thm:gordon_petrov}, the directed graph defined by $T$ is admissible if and only if the following two inequalities hold:
\begin{align*}
	&X:=\metric_{12} + \metric_{34} + \metric_{56} \le \metric_{16} + \metric_{45} + \metric_{23}:=Y,\\
	&X:=\metric_{12} + \metric_{34} + \metric_{56} \le \metric_{14} + \metric_{36} + \metric_{25}:=Z.
\end{align*}
The Wasserstein arrangement contains the three hyperplanes $X=Y$, $X=Z$, and $Y=Z$. These subdivide $\mcone{n}$ in $6$ regions,
in two of which the collection $T$ is admissible.
The cell in which $X$ is minimal is the set of metrics where $T$ is admissible, and it is obtained as the union of the two Wasserstein regions in which $T$ is admissible and of their separating wall. 
Choosing a generic point on this separating wall
and moving infinitesimally in either  direction
off this wall yields two points $\mcone{n}$ such that both points are in the open part of one cone of the type fan $\Tn{n}$ but these two points are in different chambers of the Wasserstein arrangement and thus also in different open cells of the Wasserstein fan $\GPF{n}$.
\end{proof}

\begin{remark}
    The hyperplanes $H_{\mathbf{a,b}}$ in $\CA{n}$ were already mentioned as ``exceptional planes" in the proof of Theorem 7 in \cite{GordonPetrov}. The arrangement  $\A\B_n$ of  \cite[Definition 26]{Tran} is an arrangement of hyperplanes in the space $\mathbb R^{n^2-n}$ with coordinates $y_{(i,j)}$ indexed by ordered pairs of distinct elements of $[n]$. The arrangement $\CA{n}$ can be thought of as the restriction of $\A\B_n$ to the symmetric subspace defined by $y_{(i,j)}=y_{(j,i)}$ for all $i\neq j$.
\end{remark}
\noindent 

\noindent 
\section{Regular triangulations and the secondary fan}\label{sec:triangulations}
Following~\cite{DLRS} we recall the definition of a regular subdivision of a set of points in $\R^n$, in particular of a convex polytope.
\begin{definition}\label{defRS}
	Let $\cP=\{a_1,\dots,a_k\}$ be  a set of points in $\R^n$.
	A point $\omega\in \R^k$ defines a \emph{regular subdivision} $\cP(\omega)$ as follows:
	A subset $\{{i_1},\dots,{i_r}\}$ is a facet of $\cP(\omega)$ if there exists a vector $c\in \R^n$ such that
	\begin{equation}\label{eq:subdivision}
	\begin{cases}
	c\cdot a_j  = \omega_j &\text{ if } j\in \{i_1,\dots,i_r\}\text{ and}\\
	c \cdot a_j < \omega_j&\text{ if } j\notin \{i_1,\dots,i_r\}.\\
	\end{cases}
	\end{equation}
	The subdivision $\cP(\omega)$ is called a \emph{regular triangulation} if all facets of the subdivision are simplices.
	This happens if the vector $\omega$ is sufficiently generic.
\end{definition}

Geometrically, a regular subdivision arises by ``lifting'' the points in $\cP$ to $\R^{n+1}$ where we use the coordinates of $\omega$ as heights.
The faces of $\cP(\omega)$ are then the ``lower faces'' of the convex hull of the lifted points, where a lower face is a face that has a normal vector with negative last coordinate.

\begin{definition}
Let $\cR_n^0=\{e_i-e_j\mid i,j\in \left[n\right]\}$. This is  the set of vertices of the (full) \emph{root polytope} of type $A$ together with the origin.
\end{definition}
The next proposition establishes a relationship between regular subdivisions of  $\cR_n^0$ and KRW polytopes.
A similar statement also appeared in the second arXiv version of the preprint~\cite{JS19} and more recently in~\cite[Theorem 3.8]{AF24}.

\begin{proposition}\label[proposition]{RootTriangulations}
	Let $\metric$ be an $n$-metric. 
	\begin{enumerate}
		\item The metric $\metric$ induces a regular subdivision $\cR_n^0(\metric)$ of  $\cR_n^0$ by assigning the height $\metric_{ij}$ to the point $e_i-e_j$ and the height $0$ to the origin.
		\item The subdivision $\cR_n^0(\metric)$ is point-symmetric with respect to the origin and every maximal cell contains the origin.
		\item For any index set $I\subseteq [n]^2$ the following are equivalent:
		\begin{enumerate}
			\item[(i)] The set $\{p_{ij} \mid (i,j) \in I\}$ is the set of all vertices contained in a facet of $\KRW{\metric}$.
			\item[(ii)] The set $\{e_i-e_j \mid (i,j)\in I\}\cup\{0\}$ is a maximal cell of the regular subdivision $\cR_n^0(\metric)$.
		\end{enumerate}
	\end{enumerate}
\end{proposition}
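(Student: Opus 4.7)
The plan is to unify the three parts around one observation: the normal vector $c\in \R^n$ of a supporting hyperplane of $\KRW(\metric)$ at a facet coincides, up to normalization, with the lifting functional witnessing the corresponding maximal cell of $\cR_n^0(\metric)$.

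Part (1) is immediate from \Cref{defRS}: any real-valued height function on a finite point set yields a regular subdivision, so here we are merely naming the subdivision obtained by lifting each $e_i-e_j$ to height $\metric_{ij}$ and the origin to height $0$. For part (2), point-symmetry follows because $\cR_n^0$ is invariant under $x \mapsto -x$ (origin fixed, $e_i-e_j \leftrightarrow e_j-e_i$) and the heights transform compatibly since $\metric_{ij} = \metric_{ji}$; hence if $c$ witnesses a cell through \eqref{eq:subdivision}, then $-c$ witnesses its image under $x\mapsto -x$. That the origin lies in every maximal cell is automatic: for every linear $c\in \R^n$ the equation $c\cdot 0 = 0 = \omega_{\mathrm{origin}}$ holds, forcing the origin to appear in the equality locus of any valid $c$.

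Part (3) is the main content, and I would prove it by directly dictionary-translating the two defining conditions. A facet of $\KRW(\metric)$ is cut out by a supporting hyperplane $\{x\in \R^n: c\cdot x = b\}$. Since the origin lies in the relative interior of $\KRW(\metric)$ (the vertex set is centrally symmetric about $0$), we have $b > 0$, and we may rescale to set $b=1$. The vertices on the facet are then exactly those $p_{ij}$ with $c\cdot p_{ij}=1$, and the remaining vertices satisfy $c\cdot p_{ij}<1$. Substituting $p_{ij}=(e_i-e_j)/\metric_{ij}$ turns these into $c\cdot(e_i-e_j)=\metric_{ij}$ (respectively $<\metric_{ij}$), while the equality $c\cdot 0 = 0 = \omega_{\mathrm{origin}}$ is automatic. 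By \Cref{defRS}, the same $c$ witnesses that $\{e_i-e_j : (i,j)\in I\}\cup\{0\}$ is a maximal cell of $\cR_n^0(\metric)$; the reverse implication runs identically by reading the argument backwards.

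The only delicate point is the apparent dimension shift between $(n{-}2)$-dimensional facets of $\KRW(\metric)$ and $(n{-}1)$-dimensional maximal cells of the subdivision, which is absorbed by the forced inclusion of the origin established in part (2): the maximal cell is the pyramid with apex $0$ over a face parallel (along rays from the origin) to the corresponding facet of $\KRW(\metric)$, and the linear span of the $e_i-e_j$ determines the correct top dimension on both sides simultaneously. I do not expect a genuine obstacle; the proposition is essentially a translation of vocabulary, and in particular the argument applies uniformly without assuming $\metric$ to be generic, so cells of the subdivision need not be simplices.
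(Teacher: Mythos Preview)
Your proposal is correct and follows essentially the same approach as the paper: both arguments reduce part~(3) to the observation that a vector $c\in\R^n$ satisfies $c\cdot p_{ij}=1$ (resp.\ $<1$) if and only if $c\cdot(e_i-e_j)=\metric_{ij}$ (resp.\ $<\metric_{ij}$), while $c\cdot 0=0$ is automatic. Your treatment is slightly more explicit in justifying the normalization $b=1$ via the origin lying in the relative interior, and in commenting on the dimension bookkeeping, but the underlying idea is identical.
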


\begin{proof} 
	The symmetry of the subdivision $\cR_n^0(\metric)$ follows immediately from the symmetry of the metric $\metric$.
	Moreover as the height of the origin is chosen to be $0$, the condition~\eqref{eq:subdivision} is satisfied for $a_j=0$ with an equality for any choice $c$.
	Hence, every facet of $\cR_n^0(\metric)$ contains the origin.
	
	Now fix $I\subseteq [n]^2$.
	The collection $\{p_{ij} \mid (i,j) \in I\}$ is the set of all $p_{ij}$  contained in a facet of $\KRW{\metric}$ if and only if 
	\[
	\textrm{there exists a vector }c\in \mathbb R^n
	\textrm{ with }
	\begin{cases}  c\cdot p_{ij}= 1 & \textrm{ if } (i,j)\in I \\
	c\cdot p_{ij} < 1 & \textrm{ if } (i,j)\not\in I.
	\end{cases}
	\]
	This is equivalent to requiring that
	\[
	\begin{cases}  c\cdot (e_i-e_j)= \metric_{ij} & \textrm{ if } (i,j)\in I \\
	c\cdot ( e_i-e_j  ) < \metric_{ij} & \textrm{ if } (i,j)\not\in I.
	\end{cases}
	\]
	So by definition of $\cR_n^0(\metric)$ (see \Cref{rem_dimW}) the latter condition is equivalent to the statement that the set $\{e_i-e_j \mid (i,j)\in I\}\cup\{0\}$ is a maximal cell of the regular subdivision $\cR_n^0(\metric)$.
\end{proof}

A construction by Gel'fand, Kapranov, and Zelevinsky encodes the regular triangulations of a point configuration $\cP$ as points in a polyhedral fan called the {\em secondary fan} of $\cP$.

\begin{theorem}[\cite{GKZ,DLRS}]\label{GKZ}
	Let $\cP=\{a_1,\dots,a_k\}$ be  set of points in $\R^n$.
	\begin{enumerate}
		\item The \emph{secondary fan} $\Sigma(\cP)$ is a polyhedral fan in $\R^k$ such that two points $\omega,\omega'\in \R^k$ lie in the same open (secondary) cone of $~\Sigma(\cP)$ if and only if the regular subdivisions $\cP(\omega)$ and $\cP(\omega')$ are equal.
		\item The fan $\Sigma(\cP)$ is complete, that is the support of $~\Sigma(\cP)$ is $\R^k$, as all points define some regular triangulation.
		\item The subdivision $\cP(\omega)$ is a refinement of ~$\cP(\omega')$ if and only if $\omega'$ is in the closure of the open cone that contains $\omega$.
		\item The regular triangulations of ~$~\cP$ correspond to the top-dimensional open cones of ~$~\Sigma(\cP)$.
	\end{enumerate}
\end{theorem}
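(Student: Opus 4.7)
The plan is to assign to every regular polyhedral subdivision $\mathcal{T}$ of $\cP$ the set
\[
C(\mathcal{T})=\{\omega\in\R^k \mid \cP(\omega)=\mathcal{T}\},
\]
and to verify that the closures $\overline{C(\mathcal{T})}$, as $\mathcal{T}$ ranges over all regular subdivisions, form a complete polyhedral fan $\Sigma(\cP)$ with the four listed properties. Claim (1) is essentially the construction, (2) asserts completeness, (3) relates refinement of subdivisions to the face relation of cones, and (4) identifies triangulations with maximal cones; I would extract (2)--(4) from a direct analysis of the sets $C(\mathcal{T})$.

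First I would show that each $C(\mathcal{T})$ is a relatively open polyhedral cone. For a fixed $\mathcal{T}$ with facets $F_1,\ldots,F_s$, the condition $\omega\in C(\mathcal{T})$ asks that for every $F_\ell$ there exists a witness $c_\ell\in\R^n$ satisfying the equalities and strict inequalities of \Cref{defRS}. These witnessing constraints are jointly linear in $(\omega,c_\ell)$, so Fourier--Motzkin elimination projects them to a finite system of linear equalities and strict linear inequalities in $\omega$ alone; homogeneity under positive scaling is inherited from \eqref{eq:subdivision}. Since every $\omega$ defines some $\cP(\omega)$, which then satisfies $\omega\in C(\cP(\omega))$, we also obtain completeness, which is (2).

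The heart of the argument is the simultaneous proof of (1) and (3). A continuity argument shows that if $\omega^{(m)}\to\omega$ with $\omega^{(m)}\in C(\mathcal{T})$, the associated witnesses $c_\ell^{(m)}$ admit (after passing to a subsequence) a limit $c_\ell$ satisfying the equalities over $F_\ell$ at $\omega$, while some strict inequalities may collapse: the resulting subdivision $\cP(\omega)$ is therefore coarsened by $\mathcal{T}$. Conversely, given a coarsening $\mathcal{T}'$ of $\mathcal{T}$ and any $\omega'\in C(\mathcal{T}')$, one perturbs $\omega'$ into $C(\mathcal{T})$ by breaking the extra equalities in the direction dictated by $\mathcal{T}$. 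Together these yield
\[
\overline{C(\mathcal{T})}=\bigcup_{\mathcal{T}'\,:\,\mathcal{T}\text{ refines }\mathcal{T}'} C(\mathcal{T}'),
\]
so (3) holds and the face lattice of $\overline{C(\mathcal{T})}$ is the lattice of coarsenings of $\mathcal{T}$. Finally, $C(\mathcal{T})$ is top-dimensional exactly when no nontrivial equality is forced on $\omega$, i.e., when each $c_\ell$ is determined by its values on $F_\ell$; this happens precisely when each facet of $\mathcal{T}$ is a simplex, giving (4).

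The hardest step I anticipate is showing that the collection $\{\overline{C(\mathcal{T})}\}$ is a fan in the strict sense, namely that for any two regular subdivisions $\mathcal{T},\mathcal{T}''$ the intersection $\overline{C(\mathcal{T})}\cap\overline{C(\mathcal{T}'')}$ is a common face of both closures. By the face description above this reduces to showing that whenever $\mathcal{T}$ and $\mathcal{T}''$ admit a common coarsening realized by some $\omega$, a unique coarsest common refinement that is itself a \emph{regular} subdivision of $\cP$ exists. The combinatorial existence is straightforward, but regularity is subtle and requires the lifting interpretation of $\cP(\omega)$ as the lower envelope of $\{(a_j,\omega_j)\}$ in an essential way; this is where the interaction between the combinatorics of subdivisions and the polyhedral geometry of the parameter space is most delicate.
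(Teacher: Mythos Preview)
The paper does not prove this theorem at all: it is stated with attribution to \cite{GKZ,DLRS} and used as a black box, so there is no ``paper's own proof'' to compare your proposal against. Your outline is in the spirit of the standard construction in those references.

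One genuine slip to flag: in your last paragraph you say the fan property ``reduces to showing that whenever $\mathcal{T}$ and $\mathcal{T}''$ admit a common coarsening realized by some $\omega$, a unique coarsest common \emph{refinement} that is itself a regular subdivision of $\cP$ exists.'' This is the wrong direction. By your own closure formula, $\overline{C(\mathcal{T})}\cap\overline{C(\mathcal{T}'')}$ is the union of the cells $C(\mathcal{T}')$ over all common \emph{coarsenings} $\mathcal{T}'$ of $\mathcal{T}$ and $\mathcal{T}''$; for this to be a face of both you need a finest common coarsening that is regular, not a coarsest common refinement. (The common refinement of two regular subdivisions need not even be regular, so chasing refinements here would lead you astray.) The standard route in \cite{GKZ,DLRS} sidesteps this combinatorial issue entirely by realizing $\Sigma(\cP)$ as the normal fan of the secondary polytope, from which the fan axioms are automatic.
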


\begin{remark}
\Cref{RootTriangulations} is the starting point for the study of KRW polytopes through symmetric subdivisions of $\cR_n^0$ where each maximal cell contains the origin.
In particular, one can enumerate the combinatorial types of KRW polytopes by enumerating the combinatorial types of such triangulations via the secondary fan of $\cR_n^0$ described above in \Cref{GKZ}.
This approach leads to the enumeration of the combinatorial types of KRW polytopes for the case $n=6$ using the \texttt{TOPCOM} software package~\cite{Rambau,Ram23}, see \Cref{sec:compuations}.
\end{remark}

Building on \Cref{RootTriangulations} we can reconstruct the Wasserstein fan as a `symmetrized version' of the secondary fan of the root polytope:

\begin{proposition}
	Let $\cR_n^0$ be as above.
	Then, $\Sigma(\cR_n^0)$ is a fan in $\R^{n(n-1)+1}$, say with coordinates $x_{ij}$ for the vertices $e_i-e_j$ for $i\neq j\in \left[n\right]$ and $x_0$ for the origin.
	Then
	\[
		\Tn{n}=\pi\big(\Sigma(\cR_n^0)\cap 
  Q
  \big)\cap \mcone{n},
	\]
	where $\pi:\R^{n(n-1)+1}\to \R^{\binom{n}{2}}$ is the projection sending $x_{ij}$ to $x_{\{i,j\}}$ for $1\le i<j\le n$, and $Q:=\{x\in \mathbb R^{n(n-1)+1}\mid x_{ij}=x_{ji}\textrm{ for }1\leq i<j\leq n \textrm{ and }x_0=0\}$.

Moreover, the above equality is an equality of fans.
\end{proposition}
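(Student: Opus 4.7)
The plan is to show that the two polyhedral fans have identical maximal cones, which forces equality of the fan structures (since each cone of a polyhedral fan is the closure of its relative interior, and lower-dimensional faces arise as intersections of closed maximal cones). The unifying observation is that membership in the open part of a maximal cone on either side is governed by the same combinatorial datum: the combinatorial type of $\KRW{\metric}$.

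First I would check that $\pi$ restricts to a linear isomorphism $Q \xrightarrow{\sim} \R^{\binom{n}{2}}$, under which the point $\tilde\metric \in Q$ associated to a metric $\metric\in\mcone{n}$ lifts each vertex $e_i-e_j$ of $\cR_n^0$ to height $\metric_{ij}$ and the origin to height $0$; by \Cref{defRS} this is exactly the regular subdivision $\cR_n^0(\metric)$ introduced in \Cref{RootTriangulations}. Now for $\metric,\metric'\in\mcone{n}$ I would chain three equivalences: (i) by the (unlabeled) proposition in Section~\ref{sec:WA} equating sameness of open cone of $\GPF{n}$ with sameness of admissible graphs, $\metric$ and $\metric'$ lie in the same open cone of $\GPF{n}$ iff their KRW polytopes have the same combinatorial structure; (ii) by \Cref{RootTriangulations}(3), the facets of $\KRW{\metric}$ are in bijection with the maximal cells of $\cR_n^0(\metric)$, and since a polytope is combinatorially determined by its facets while the admissible graph of a face $F$ is the intersection of the admissible graphs of the facets containing~$F$ (a consequence of \Cref{thm:gordon_petrov}), the KRW combinatorial structures of $\metric$ and $\metric'$ agree iff $\cR_n^0(\metric)=\cR_n^0(\metric')$; (iii) by \Cref{GKZ}(1), the latter holds iff $\tilde\metric$ and $\tilde{\metric}'$ lie in the same open cone of $\Sigma(\cR_n^0)$, equivalently of the projected fan $\pi(\Sigma(\cR_n^0)\cap Q)\cap\mcone{n}$. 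Concatenating (i)--(iii) shows that the relative interiors of the maximal cones of the two fans coincide as subsets of $\mcone{n}$; taking closures (and noting that lower-dimensional cones arise as intersections of closed maximal cones in both fans) yields the claimed equality of fans.

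The main obstacle I anticipate is step (ii): ensuring that the facet-level correspondence of \Cref{RootTriangulations}(3) suffices to recover the full KRW combinatorial structure, which a priori records an admissible graph for every face of $\KRW{\metric}$ and not merely for its facets. This rests on the polytope-theoretic fact that the face lattice is determined by the facet incidence data, combined with the structural observation from \Cref{thm:gordon_petrov} that reduces the admissibility of a graph for a face $F$ to admissibility for each facet containing~$F$. Once this dictionary is verified, the remainder of the proof is a three-way identification between sign patterns in the Wasserstein arrangement, KRW combinatorial structures, and regular subdivisions of~$\cR_n^0$.
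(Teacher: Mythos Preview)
Your proposal is correct and follows essentially the same approach as the paper: both argue that two metrics $\metric,\metric'\in\mcone{n}$ lie in the same open cone of $\GPF{n}$ iff they have the same KRW combinatorial structure iff (via \Cref{RootTriangulations}) they induce the same regular subdivision of $\cR_n^0$ iff (via \Cref{GKZ}) they lie in the same open cone of $\Sigma(\cR_n^0)\cap Q$. The only difference is framing: the paper matches \emph{all} open cones directly (the chain of equivalences already works for non-generic $\metric$, so the partitions into relatively open cones coincide and the fan equality is immediate), whereas you phrase it as matching maximal cones first and then recovering lower-dimensional ones as intersections---this detour is unnecessary but harmless here, and your careful justification of step~(ii) is a point the paper simply glosses over.
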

\begin{proof}
	Let $\metric$ be an $n$-metric in $\mcone{n}$ and let $F_\metric$ be the unique open face of the type fan $\Tn{n}$ that contains $\metric$.
	Furthermore, let $\metric'$ be another $n$-metric. By construction of the regular subdivisions (\Cref{RootTriangulations}.(1)) and by symmetry of $\metric$ and $\metric'$, surely $\cR_n^0(\metric)$ and $\cR_n^0(\metric')$ lie in $Q$.
 
 Now, by construction of $\Tn{n}$ the polytopes $\KRW{\metric}$ and $\KRW{\metric'}$ determine the same combinatorial structure if and only if $\metric'\in F_\metric$.
	By \Cref{RootTriangulations}, this means that $\cR_n^0(\metric)$ and $\cR_n^0(\metric')$ define the same regular subdivision of $\cR_n^0$ if and only if $\metric' \in F_\metric$. 
	Hence $\metric$ and $\metric'$ lie in the same open cone of $\Sigma(\cR_n^0)\cap Q
 $ if and only if $\metric'\in F_\metric$ which implies the claim.
\end{proof}

\section{Comparison with tight spans and injective hulls}\label{sec:tight_spans}

The aim of this section is to relate the stratification of the metric cone by combinatorial type of KRW polytopes to the one in terms of the combinatorial type of a well-known polyhedral complex associated to metric spaces: the {\em injective hull} defined by Isbell \cite{Isbell} and rediscovered by Dress \cite{Dress}  with the name {\em tight span}. It provides a detailed explanation of the fact that for $n\geq 5$ the classification of metric spaces by (labeled, resp.\ unlabeled) combinatorial type of KRW-polytopes is neither finer nor coarser than the classification by (labeled, resp.\ unlabeled) combinatorial type of tight spans. This incomparability statement remains true when restricting to a ``pruned" version of the tight span, and it implies incomparability of the fan $\GPF{n}$ and the secondary fan of the second hypersimplex.

In order to make precise statements, we start by outlining the construction and some key facts, and refer to \cite{Lan13} for a more detailed account and for a thorough analysis of the structure of injective hulls of general metric spaces.

Given an $n$-metric $\metric$ we consider the (unbounded) polyhedron 
$$P(\metric):=\{x\in \R^n \mid x_i + x_j \geq \metric_{ij} \textrm{ for all } i,j\in [n]\}.$$
For $x,y\in \R^n$ write $x\leq y$ if and only if $x_i\leq y_i$ for all $i=1,\ldots,n$ and define
$$
E(\metric):=\{y\in P(\metric) \mid
\textrm{if }x\in P(\metric)
\textrm{ and }x\leq y,
\textrm{ then }x=y
\},
$$
the set of coordinate-wise minimal elements of $P(\metric)$.

\begin{remark} Equivalently, $E(\metric)$ is the set of bounded faces of the polyhedron $P(\metric)$. This fact is stated in \cite[Lemma 1]{Dress89}. We refer to \cite[Lemma 5.10]{BasicPhyl} for a proof. In particular, the set $E(\metric)$ is naturally a polyhedral complex. 
\end{remark}

\begin{remark}[Injective hulls]  
A metric space $X'$ is called an {\em injective hull} of $X$ if $X'$ is injective (see, e.g., \cite{Lan13} for a definition) and there is an isometric embedding $h$ of $X$ into $X'$ such that every isometric embedding of $X$ into an injective metric space $Y$ factors through  $h$. 
 By work of Isbell \cite[Theorem 2.1]{Isbell} every metric space has an injective hull that is unique up to isometry.   
\end{remark}

Isbell's existence proof in \cite[\S2]{Isbell} constructs a special realization of the injective hull as a polyhedral complex which, as is detailed in \cite{Lan13}, specializes to $E(\rho)$ for instance when $\rho$ is finite. The same construction was independently rediscovered by Dress in \cite{Dress}, under the name {\em tight span}. Both names have been widely used in the literature and we will use them interchangeably in order to refer to the object we define next.

\begin{definition}[{\cite[\S2]{Isbell}}, {\cite[\S 3]{Lan13}}, {\cite[Theorem 3.(v)]{Dress}}]\label[definition]{defIH}
The tight span of $\rho$ is the realization of the injective hull of $\rho$ given by the set $E(\metric)$ endowed with the metric induced by the supremum norm of $\R^n$, i.e., the distance between $x,y\in E(\metric)$ is 
$\sup\{\vert x_i - y_i \vert \mid i\in[n]\}$. 
\end{definition}

\begin{definition}[Distinguished vertices of $E(\metric)$]
\label[definition]{dve}
For every $k\in [n]$, the point $\hk{k}{}$ defined as $\hk{k}{i}=\metric_{ik}$ (i.e., the $k$-th column of the distance matrix associated to $\rho$) is a vertex of $E(\metric)$. It is the image of $k$ under the canonical isometric embedding into $E(\rho)$ (see \cite[Item (1.4)]{Dress} and \Cref{ats}).
\end{definition}

\begin{example}\label{example_IH}
\Cref{fig:tight_span_rho_1,fig:tight_span_rho_2} show a schematic view of the complexes $E(\metric)$ associated to the metrics given in \Cref{example_decomp}. The numbers along the edges give the distance between the edge's endpoints with respect to the metric of the injective hull. The solid dots are the image of the canonical embedding of the metric space, and are labeled by the corresponding element of $[n]$.
\end{example}

\begin{remark}[Pruned Tight Span]
\label[remark]{pts}
We will have use for a subcomplex $E^\sharp(\metric)$ of $E(\metric)$ that we suggest to call ``pruned Tight Span" of $\metric$ (see \Cref{AppDef}). In \Cref{AppAntennas} we prove that it can be characterized as being obtained by removing from $E(\metric)$ all vertices of the form $\hk{k}{}$ that are contained in a single one-dimensional cell of $E(\metric)$.

The complexes $E^\sharp(\metric_1)$, $E^\sharp(\metric_2)$ for the metrics in \Cref{example_decomp} are obtained from the complexes depicted in \Cref{fig:tight_span_rho_1,fig:tight_span_rho_2} by removing all solid-color vertices and the edges incident to them. 

A note of caution: in the computational geometry package \texttt{polymake}~\cite{polymake}, the tight span $E(\rho)$ is computed by the method \texttt{metric\char`_extended\char`_tight\char`_span} and our ``pruned tight span'' $E^\sharp(\rho)$ by the method \texttt{metric\char`_tight\char`_span}.
\end{remark}

\begin{remark}[Combinatorial types of tight spans] \label[remark]{ctts}
We will speak of the labeled combinatorial type of tight spans according to the terminology of \Cref{cts} with respect to the following labeling of vertices. Some vertices of $E(\metric)$ have a natural labeling by elements of $[n]$ discussed in \Cref{dve}. When passing to $E^\sharp(\metric)$ via the characterization given in \Cref{pts}, for each $k$ such that $\hk{k}{}$ is pruned, there is a unique vertex of $E^\sharp(\metric)$, called $\vk{k}{}$ in \Cref{AppAntennas}, that is incident to the pruned edge. Note that if $\metric$ is a metric, for different $k$ all $\hk{k}{}$ are distinct, but some $\vk{k}{}$ may concide. Thus we can label vertices of $E^\sharp(\metric)$ either by the label they inherit from $E(\metric)$ or by the set of labels of adjacent pruned vertices in~$E(\metric)$.

\end{remark}

Now recall the {\em second hypersimplex} 
  $~\Delta_{2,n}:=\operatorname{conv}\{e_i+e_j \mid i,j\in[n],\, i\neq j\} \subseteq \R^{n}$.
Following the setup of \cite{HJ08}, every  $\metric\in \R^{{n\choose 2}}$ defines a lift of $\Delta_{2,n}$:
$$
L(\metric):=\operatorname{conv}\{(e_i+e_j,-\metric_{ij}) \mid i,j\in [n], i\neq j\} + \R_{\geq 0} e_{n+1}\subseteq \R^{n+1}.
$$
The projection of the bounded faces of $L(\metric)$ onto $\R^n$ defines the regular subdivision  $\Delta_{2,n}(-\metric)$. The stratification of $\R^{{n \choose 2}}$ according to combinatorial type of this regular subdivision gives rise to the secondary fan $\Sigma(\Delta_{2,n})$, a well-studied object in its own right (see, e.g., \cite{DST,SturmfelsYu}).

Restricting to the metric, resp.\ pseudometric cone, we obtain the set of strata
$$
\stratsharp{n}=\{\sigma\cap \mcone{n} \mid  \sigma \in \Sigma(\Delta_{2,n})\}
\quad\quad
\pstratsharp{n}=\{\sigma\cap \mconeclosure{n} \mid  \sigma \in \Sigma(\Delta_{2,n})\}.
$$
Note that, although every stratum is a cone, these are not necessarily  polyhedral fans because $\mcone{n}$ and $\mconeclosure{n}$ are cones and not collections of faces, see \Cref{remcone}.\footnote{For instance, it could happen that some ray $\gamma$ of the cone  $\mconeclosure{n}$ is contained in the interior of some  $\sigma\in\Sigma(\Delta_{2,n})$. Then, $\pstratsharp{n}$ will contain the cone $\sigma\cap\mconeclosure{n}$ but not its face $\gamma$ (this is because $\mconeclosure{n}$ is not a fan but a single cone).} 

\begin{remark}\label[remark]{hjrem} A well-known fact going back to work of Herrmann and Joswig \cite{HJ08} states that for any two pseudometrics $\metric_1,\metric_2\in\mconeclosure{n}$ the complexes $E^\sharp(\metric_1)$ and $E^\sharp (\metric_2)$ have the same labeled combinatorial type if and only if they are in the same stratum of $\pstratsharp{n}$.
\end{remark}

\begin{definition}[Metric fan] \label{def:metric_fan}
The {\em metric fan} as defined in \cite{CJK24} is the fan 
$$
\MF{n}=\{\mu \cap \sigma \mid \mu\textrm{ a face of } \mconeclosure{n},\sigma\in \Sigma(\Delta_{2,n})\}.
$$ 
\end{definition}

\begin{remark} 
\label[remark]{interiorstory}
The fan $\MF{n}$ is polyhedral, and its rays are described in detail in \cite[Proposition 25]{CJK24}.  
Note that since all non-trivial faces of $\mconeclosure{n}$ are contained in the boundary of $\mcone{n}$, the stratification given by $\MF{n}$ and $\stratsharp{n}$ coincide in the topological interior $\mconeinterior{n}$ of $\mcone{n}$. Precisely, we have
$
\{\sigma \cap \mconeinterior{n} \mid \sigma \in \MF{n}\} = \{\tau  \cap \mconeinterior{n}\mid \tau\in \stratsharp{n}\}.
$
Note also that the labeled combinatorial type of $E(\rho)$ is determined by the cone of $\MF{n}$ containing $\rho$ (see \Cref{EMF}).
\end{remark}

The main theorem of this section is the following incomparability result between the Wasserstein fan and the secondary fan of the second hypersimplex.

\begin{theorem} \label{thm:coarsefine}
For $n\geq 5$, 
the fans $\GPF{n}$ and $\Tn{n}$ are neither coarsenings nor refinements of the subdivision $\stratsharp{n}$ of the metric cone induced by the secondary fan of $\Delta_{2,n}$. 
Similarly, for $n\geq 5$ the fan $\GPFclosed{n}$ is neither a coarsening nor a refinement of the metric fan $\MF{n}$. 
\end{theorem}
\begin{remark}[The case $n=4$]
    The fan $\GPF{4}=\Tn{4}$ is a refinement of the subdivision $\stratsharp{4}$ of the metric cone induced by the  secondary fan of $\Delta_{2,4}$. See \Cref{fig:tight_span_4} and \Cref{sec_n4}.
\end{remark}

\Cref{thm:coarsefine} is a consequence of the following statement.

\begin{proposition}\label[proposition]{newthm}
For every $n\geq 4$ there is a pair of strict metrics whose injective hulls have the same labeled combinatorial type, but whose KRW-polytopes have different unlabeled combinatorial type. For $n\geq 5$ there is a pair of strict metrics for which the KRW-polytopes lie in the same open cell of the Wasserstein fan $\GPF{n}$ (and hence they have the same labeled combinatorial type), but whose injective hulls have different unlabeled combinatorial type.
\end{proposition}

\begin{proof}[Proof of \Cref{thm:coarsefine} under assumption of \Cref{newthm}] 
All metrics cited in \Cref{newthm} are strict, hence they lie in the topological interior of the metric cone and so \Cref{interiorstory} applies. This means that the statement of \Cref{newthm} remains true substituting the injective hulls with the pruned tight span of \Cref{pts}. Now the first claim holds by the result of Herrmann and Joswig cited in \Cref{hjrem}, and the second claim follows from \Cref{interiorstory}.
\end{proof}

We need two general facts for the proof of \Cref{newthm} which we state as lemmas as they may be of independent interest. Recall that if $P$ and $Q$ are two polytopes in $\mathbb R^n$ both containing the origin and for which the affine hull of $P$ intersects the affine hull of $Q$ only in the origin, then the {\em free sum} of $P$ and $Q$ is $P\oplus Q:=\conv(P\cup Q)$, see for instance~\cite{Kal88}.

\begin{lemma}\label[lemma]{lem:directsum} Let $\metric_1$, $\metric_2$ be metrics on the sets 
$[n]$ and $\{n,\ldots,n+m\}$, respectively. 
Consider the function
$
\metric_1\oplus \metric_2: [n+m]^2 \to \mathbb R
$
defined via 
$$
(\metric_1\oplus \metric_2)(i,j):=\begin{cases}
(\metric_1)_{ij} &
\text{if}\quad i,j\leq n, \\
(\metric_1)_{in}+(\metric_2)_{nj} & \text{if}\quad i\leq n < j, \\
(\metric_2)_{ij} & \text{if}\quad n\leq i,j.
\end{cases}
$$
Then $\metric_1\oplus \metric_2$ is a metric, and we have a direct (free) sum decomposition $$
\KRW{\metric_1\oplus \metric_2} = \KRW{\metric_1}\oplus \KRW{\metric_2}.
$$
\end{lemma}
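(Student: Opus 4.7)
The plan is to establish the result in three movements: first verify that $\metric_1 \oplus \metric_2$ is a metric, then identify the set of generators of $\KRW{\metric_1 \oplus \metric_2}$ with the union of the suitably embedded vertex sets of $\KRW{\metric_1}$ and $\KRW{\metric_2}$, and finally check that these two factors sit in transverse linear subspaces of $\R^{n+m}$ so that the free-sum construction applies.

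For the metric property, symmetry, vanishing on the diagonal, and positivity off the diagonal are immediate from the definition. For the triangle inequality on a triple $\{i,j,k\} \subseteq [n+m]$, I would run a short case analysis on how the three indices distribute across the overlapping halves $[n]$ and $\{n, \ldots, n+m\}$. When all three lie on one side the inequality reduces to the triangle inequality in $\metric_1$ or $\metric_2$; in the mixed cases the observation $(\metric_1 \oplus \metric_2)(a,b) \leq (\metric_1 \oplus \metric_2)(a,n) + (\metric_1 \oplus \metric_2)(n,b)$ (with equality whenever $a$ and $b$ lie on opposite sides) reduces each subcase to a triangle inequality in one of the original metrics.

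For the vertex identification, embed $\R^n$ and $\R^{m+1}$ into $\R^{n+m}$ as the coordinate subspaces indexed by $[n]$ and by $\{n, \ldots, n+m\}$. Then the generators $p_{ij}$ of $\KRW{\metric_1 \oplus \metric_2}$ whose indices both lie inside $[n]$, respectively inside $\{n, \ldots, n+m\}$, are literally the vertices of the embedded $\KRW{\metric_1}$, respectively $\KRW{\metric_2}$; in particular both factors are contained in $\KRW{\metric_1 \oplus \metric_2}$. For a genuinely crossing generator $p_{ij}$ with $i \in [n-1]$ and $j \in \{n+1, \ldots, n+m\}$, the splitting $e_i - e_j = (e_i - e_n) + (e_n - e_j)$ and the definition of $\metric_1 \oplus \metric_2$ give at once
\[
p_{ij} = \frac{(\metric_1)_{in}}{(\metric_1)_{in} + (\metric_2)_{nj}}\, p_{in} + \frac{(\metric_2)_{nj}}{(\metric_1)_{in} + (\metric_2)_{nj}}\, p_{nj},
\]
a convex combination of a vertex of $\KRW{\metric_1}$ with a vertex of $\KRW{\metric_2}$. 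Combining both inclusions yields $\KRW{\metric_1 \oplus \metric_2} = \conv(\KRW{\metric_1} \cup \KRW{\metric_2})$.

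Finally, each KRW polytope contains the origin (since $p_{ij} + p_{ji} = 0$), so its affine hull is the linear span of its vertices: the subspace $\{x \in \R^{n+m} : x_{n+1} = \cdots = x_{n+m} = 0,\ \sum_{i=1}^n x_i = 0\}$ for $\KRW{\metric_1}$ and the analogous subspace for $\KRW{\metric_2}$. Intersecting these forces $x_i = 0$ for $i \neq n$ and then $x_n = 0$ by the remaining sum condition, so the two affine hulls meet only at the origin. Hence the free sum is well defined in the sense of the paragraph preceding the lemma, and by the previous step it coincides with $\KRW{\metric_1 \oplus \metric_2}$. The only mildly delicate step is the case book-keeping in the triangle inequality; the geometric heart of the argument is the one-line convex-combination identity above, which is precisely what causes the crossing vertices to disappear when passing to the free sum.
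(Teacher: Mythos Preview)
Your proof is correct and follows essentially the same route as the paper: both establish the metric axioms, observe that the two factor polytopes sit inside $\KRW{\metric_1\oplus\metric_2}$ in transverse affine subspaces meeting only at the origin, and dispose of the crossing generators $p_{ij}$ with $i\le n<j$ via the identical convex-combination identity $p_{ij}=\lambda\,p_{in}+\mu\,p_{nj}$. If anything, your verification of the transversality condition (working explicitly with the zero-sum hyperplanes in the two coordinate subspaces of $\R^{n+m}$) is a touch more careful than the paper's.
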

\begin{proof}
A straightforward check of the definitions shows that $\metric_1\oplus\metric_2$ is a metric. 

The set $X_1$ of pairs of distinct elements from $[n]$ is disjoint from the set $X_2$ of pairs of distinct elements of $\{n,\ldots,n+m\}$. The KRW polytopes of $\metric_1$ and $\metric_2$ both contain the origin and lie in the subspaces $\mathbb R^{X_1}$ and $\mathbb R^{X_2}$ of $\mathbb R^{\binom{n+m}{2}}$, respectively.
Since the $X_i$ are disjoint, $\mathbb R^{X_1}\cap \mathbb R^{X_2} = \{0\}$. Then, by definition 
$\KRW{\metric_1}\oplus \KRW{\metric_2}=\conv(\KRW{\metric_1}\cup \KRW{\metric_2}).$
For brevity call $K$ the latter polytope, and let $\hat{\metric}:= \metric_1\oplus \metric_2$. 
We have to prove that $K=\KRW{\hat{\metric}}$.

By definition of $\hat{\metric}$, both $\KRW{\metric_1}$ and $\KRW{\metric_2}$ are contained in $\KRW{\hat{\metric}}$. Therefore, $K$  is contained in $\KRW{\hat{\metric}}$. The only potential vertices of $\KRW{\hat{\metric}}$ not contained in $K$ are given by vectors $\hat{p_{ij}}:=\frac{e_i-e_j}{\hat{\metric}_{ij}}$ with $i\leq n < j$.
But by definition of $\hat{\metric}$ we have 
$$
\frac{e_i-e_j}{\hat{\metric}_{ij}} =
\lambda\frac{e_i-e_n}{{\metric_1}_{in}}
+
\mu\frac{e_n-e_j}{{\metric_2}_{nj}}
$$
with $\lambda=\frac{({\metric_1})_{in}}{(\metric_1)_{in}+(\metric_2)_{nj}}$ and $\mu=\frac{({\metric_2})_{nj}}{(\metric_1)_{in}+(\metric_2)_{nj}}$ being positive numbers with $\lambda+\mu=1$. Therefore $\hat{p_{ij}}$ is in $K$ for all $i\leq n<j$, thus $\KRW{\hat{\metric}}=K$.
\end{proof}

\begin{lemma}\label[lemma]{10zeilen} If $\rho$ is a metric on the set $[n]$ and $t$ a metric on the set $\{n,n+1\}$, then  the tight span of $\rho\oplus t$ is obtained from the tight span of $\rho$ by gluing a new edge at the vertex given by $\hk{n}{}$ (defined in \Cref{dve}).
\end{lemma}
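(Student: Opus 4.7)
The plan is to realise $E(\rho\oplus t)$ as the union of an embedded copy of $E(\rho)$ and a single new $1$-cell meeting it at the vertex $h_n$. Setting $s:=t_{n,n+1}$, I would introduce the affine embedding
$$
\iota\colon E(\rho)\hookrightarrow \mathbb{R}^{n+1},\quad \iota(y):=(y_1,\ldots,y_n,y_n+s),
$$
and the candidate edge
$$
\sigma(\lambda):=(\rho_{n1}+\lambda,\ldots,\rho_{n,n-1}+\lambda,\lambda,s-\lambda),\quad \lambda\in[0,s],
$$
which satisfies $\sigma(0)=h_n=\iota(h_n)$ and $\sigma(s)=h_{n+1}$.

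The first step is a direct verification that $\iota(E(\rho))$ and $\sigma([0,s])$ are both contained in $E(\rho\oplus t)$. The defining inequalities of $P(\rho\oplus t)$ hold by construction, and coordinate-wise minimality is checked by exhibiting a tight inequality at each coordinate: for $\iota(y)$, tightness at coordinate $n+1$ is inherited from tightness at coordinate $n$ in $E(\rho)$ via the identity $\iota(y)_{n+1}-\iota(y)_n=s$; for $\sigma(\lambda)$, every inequality $x_i+x_{n+1}\geq(\rho\oplus t)_{i,n+1}$ with $i\leq n$ is tight simultaneously.

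The core step is the reverse inclusion $E(\rho\oplus t)\subseteq \iota(E(\rho))\cup\sigma([0,s])$. For $\tilde y=(y,z)\in E(\rho\oplus t)$, coordinate-wise minimality at $n+1$ forces $z$ to satisfy one of the following: (i) $z=\rho_{jn}+s-y_j$ for some $j<n$, (ii) $z=s-y_n$, or (iii) $z=0$. In case (i), either $y_j+y_n=\rho_{jn}$, giving $z=y_n+s$ and $\tilde y=\iota(y)$ (with the minimality of $\tilde y$ at each coordinate in $[n]$ transferring directly to minimality of $y$ in $E(\rho)$), or $y_j+y_n>\rho_{jn}$, in which case the minimality of $\tilde y$ at coordinate $n$, examined witness by witness with the triangle inequality for $\rho$ excluding every witness in $[n]$, reduces to case (ii) with $y_n>0$. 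In case (ii), the inequalities $z\geq\rho_{jn}+s-y_j$ give $y_j\geq\rho_{jn}+y_n$ for every $j<n$; examining tightness at each coordinate $j<n$ and invoking $\rho_{jk}\leq\rho_{jn}+\rho_{kn}$ forces either $y_j=\rho_{jn}+y_n$ for all $j<n$ (so $\tilde y=\sigma(y_n)$), or $y_n=0$ with $y_j=\rho_{jn}$ (so $\tilde y=h_n$). Case (iii) is handled by a parallel triangle-inequality argument and yields $\tilde y=h_{n+1}=\sigma(s)$.

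Finally, since $\iota$ is affine and injective, it induces a polyhedral isomorphism $E(\rho)\to\iota(E(\rho))$; equating $y_n+s=s-\lambda$ forces $y_n=\lambda=0$, so $\iota(E(\rho))\cap\sigma([0,s])=\{h_n\}$. This identifies $E(\rho\oplus t)$ combinatorially as $E(\rho)$ with a new edge glued at the vertex $h_n$, as claimed. I expect the main obstacle to be the case analysis in the reverse-inclusion step, where several careful applications of the triangle inequality for $\rho$ are needed to eliminate extraneous tight-span points.
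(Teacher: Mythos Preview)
Your argument is correct. The target sets $\iota(E(\rho))$ and $\sigma([0,s])$ coincide with the paper's $\hat{E}(\sigma)$ and $\hat{E}(\delta)$, and your verification that both lie in $E(\rho\oplus t)$ matches the paper's forward inclusion. The difference lies in the reverse inclusion. The paper invokes the Bandelt--Dress split decomposition: writing $\rho\oplus t=\sigma+\delta$ with $\delta$ a split pseudometric, their Theorem~7 gives the Minkowski-sum identity $P(\rho\oplus t)=P(\sigma)+P(\delta)$, hence $E(\rho\oplus t)\subseteq E(\sigma)+E(\delta)$, so that every point of $E(\rho\oplus t)$ is already parametrised as $(y_1+\alpha,\ldots,y_n+\alpha,y_n+\lambda-\alpha)$ with $y\in E(\rho)$ and $0\le\alpha\le\lambda$, and the remaining case analysis reduces to whether $\alpha=0$ or $\alpha>0$. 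Your approach dispenses with this external input and works directly from coordinate-wise minimality at $n+1$, handling the witnesses case by case; this is more elementary and self-contained, at the price of a longer case analysis. One minor remark: in your case~(i), second subcase, excluding witnesses $k\in[n]$ for coordinate $n$ does not actually require the triangle inequality for $\rho$ (it follows from the tightness $y_j+z=\rho_{jn}+s$ together with $y_k+z\ge\rho_{kn}+s$ and $y_n+y_k=\rho_{nk}$); the triangle inequality enters only in cases~(ii) and~(iii).
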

 
\begin{proof} The statement follows from \cite[Theorem 1.1]{Miesch} (see also \cite[Lemma 2.2]{HJ08}), but we give a self-contained proof here. Write  $\lambda:=t_{n\, n+1}=(\rho\oplus t)_{n\, n+1}$, and consider the pseudometrics $\sigma$ and $\delta$ on $[n+1]$ where $\sigma$ is given by
$\sigma_{ij}:=\rho_{ij} $ for $j\neq n+1$, $\sigma_{i\, n+1}:=\rho_{in}$, and  $\delta:= \lambda\cdot\delta_{[n],\{n+1\}}$ is a (non-zero) multiple of the split pseudometric defined in \Cref{defsplits}. As in \cite{BandeltDress}, we can apply the construction of the tight span also to pseudometrics, obtaining
\begin{align*}
E(\sigma) &=\{x\in\mathbb R^{[n+1]} \mid (x_1,\ldots,x_n)\in E(\rho), x_{n+1}=x_n\},\\ 
E(\delta) &=\{x\in\mathbb R^{[n+1]} \mid x_1=x_2=\ldots=x_n,(x_n,x_{n+1})\in E(t)\}\\ 
&= \{(\alpha,\ldots,\alpha,\lambda-\alpha)\in\mathbb R^{n+1} \mid 0\leq \alpha\leq \lambda\},
\end{align*}
which are  polyhedral complexes isomorphic to $E(\rho)$ and $E(t)$, respectively.
Let $$\hhk{n}{} :=(\rho_{in},\ldots,\rho_{nn},\rho_{nn})$$ denote the image of $\hk{n}{}\in E(\rho)$ in $E(\sigma)$, and 
let\footnote{Sum of subsets of Euclidean space is intended elementwise, i.e., as Minkowski sum.} 
\def\hE{\hat{E}}
$$\hE(\sigma) := E(\sigma) + (0,0,\ldots,0,\lambda),\quad
\hE(\delta):= E(\delta) + \hhk{n}{}.$$

Since $\hE(\sigma) \cap \hE(\delta)=\{\hhk{n}{}\}$, it will be enough to show that
\begin{equation}\label{almostlast}
 E(\rho\oplus t) = \hE(\sigma) \cup \hE(\delta),
\end{equation}
since $\hE(\delta)$ (isometric to the unit interval) is joined to $\hE(\sigma)$ (isomorphic to $E(\rho)$) at the unique common point $\hhk{n}{} + (0,\ldots,0,\lambda)$, the image of $\hk{n}{}$ in $E(\rho)$.

The inclusion $E(\rho\oplus t) \supseteq \hE(\sigma) \cup \hE(\delta)$ can either be checked directly with \Cref{defIH} or deduced from the fact that $\hE(\sigma)$, resp.\ $\hE(\delta)$ are the images of $E(\rho)$, resp.\ $E(t)$, under the canonical inclusion into $E(\rho\oplus t)$ described in \cite[Item (1.11)]{Dress}\footnote{In the language of \cite[Item (1.11)]{Dress}, we are considering the inclusions of tight spans $\tau_{[n]}$, resp.\ $\tau_{n+1}$, induced by the extensions of $\rho$ to $\rho\oplus t$, resp.\ of $t$ to $\rho\oplus t$.}.

For the other inclusion start by noticing that, obviously, $\rho\oplus t = \sigma + \delta$. The isolation index of $n+1$ with respect to $\rho\oplus t$ (see \Cref{def:ii} below for the definition) is   $\alpha_{[n],\{n+1\}}^{\rho\oplus t} = \lambda$. Therefore \cite[Theorem 7]{BandeltDress} applies and shows $P(\rho\oplus t) = P(\sigma)+P(\delta)$. As is explained, e.g., in \cite[p. 89]{BandeltDress}\footnote{In \cite{BandeltDress}  our $E(\rho)$ is denoted by $T(\rho)$.}, this implies $E(\rho\oplus t)\subseteq E(\sigma) + E(\delta)$.  Now for the inclusion $E(\rho\oplus t) \subseteq \hE(\sigma) \cup \hE(\delta)$ it is enough to show that 
\begin{equation}\label{atlast}
E(\rho\oplus t) \cap (E(\sigma) + E(\delta)) \subseteq 
\hE(\sigma) \cup \hE(\delta).
\end{equation}
Take $p\in (E(\sigma) + E(\delta))$. In our setup, without loss of generality we can write
$$
p= (y_1+\alpha,\ldots, y_n+\alpha, y_n + (\lambda - \alpha)),\quad \textrm{ with }
y\in E(\rho), \;\; 0\leq \alpha\leq \lambda.
$$
We surely have that $p\in P(\rho\oplus t)$, since:

\begin{itemize}
\item[(i)] for $i,j\in [n]$: $p_i + p_j = y_i+ y_j + 2\alpha \geq \rho_{ij} + 2\alpha \geq (\rho\oplus t)_{ij} +2\alpha \geq (\rho\oplus t)_{ij}$;
\item[(ii)] for $i\in [n]$: $p_i+p_{n+1} = y_i + \alpha + y_n + \lambda - \alpha = y_i + y_n +\lambda \geq \rho_{in} + t_{nn+1} = (\rho\oplus t)_{i,n+1}$.
\end{itemize}

If $\alpha = 0$, then $y\in E(\rho)$  implies that $(p_1,\ldots,p_n)$ is already coordinate-wise minimal satisfying  (i). In particular, there is $i_0\leq n$ with $y_{i_0}+y_n = \rho_{{i_0}n}$. This implies that $p_{n+1}$ cannot be made smaller either, because $p_{{i_0}}+p_{n+1}=y_{i_0} + y_n +\lambda = \rho_{{i_0}n}+t_{n n+1} = (\rho + t)_{{i_0} n+1} $ is an equality. In this case, then, $p\in E(\rho\oplus t)$ implies $p\in \hE(\sigma)$.

If $\alpha > 0$, then $y\in E(\rho)$  implies that none of the inequalities in (i) is an equality, indeed $p_i+p_j\geq \rho_{ij}+2\alpha$ for all $i,j\in [n]$. 
Now if $y_j + y_n > \rho_{in}$ for some $j$, define $\tilde{p}$ by $\tilde{p}_j:=p_j-{\alpha}$ and $\tilde{p}_i:=p_i$ for $i\neq j$. Then $\widetilde{p}\in P(\rho\oplus t)$, yet $\tilde{p}$ is smaller than $p$ in the $j$-th coordinate. Thus $p\in E(\rho\oplus t)$ implies that $y_j + y_n = \rho_{in}$ for all $j\in [n]$, so $p\in \hE(\delta)$. This concludes the proof of \Cref{atlast}, hence of \Cref{almostlast} and, with it, of the claim.
\end{proof}

\begin{definition}\label[definition]{def:tk}
For given $k>0$ let $\tk{n}{k}$ denote the $k$-metric defined on the set $\{n,\ldots,n+k\}$ by setting $\tk{n}{k}_{i,j} := \vert j-i\vert$. This is a graph metric realized on the full set of vertices of a path with $k+1$ vertices.
\end{definition}

\begin{remark}\label[remark]{appendpath} 
Let $\rho$ be any $n$-metric. Then $$\rho\oplus \tk{n}{k}=
(\cdots (\rho\oplus \tk{n}{1}) \oplus \tk{n+1}{1})\cdots )\oplus \tk{n+k-1}{1}.
$$
By repeated application of \Cref{10zeilen} we obtain that the tight span of $\rho\oplus \tk{n}{k}$ is obtained from the tight span of $\rho$ by appending a $(k+1)$-path at the vertex $h_n$ of $E(\rho)$.
\end{remark}

\begin{proof}[Proof of \Cref{newthm}]
\Cref{example_decomp}  exhibits two $5$-metrics that lie in the same open cone of~
$\CA{5}$ (thus they have combinatorially isomorphic KRW polytopes), but lie in different cones of the secondary fan of $\Delta_{2,5}$ (and thus have different tight spans). 
Moreover, \Cref{ex:4pt} shows two metrics on $4$ points that are in the same cone of the secondary fan of $\Delta_{2,4}$ but have non-isomorphic KRW polytopes. 
Starting from those, we are going to construct examples for any (finite) number of points.

Fix $k>0$ and recall the metrics  $\tk{n}{k}$ defined in \Cref{def:tk}. Since these are graph metrics realized on the full set of vertices of a tree with $k+1$ vertices, $\KRW{\tk{n}{k}}$ is unimodularly equivalent to the cross-polytope of dimension $k$. Moreover, the tight span of $\tk{n}{k}$ is combinatorially equivalent to the tree realizing it, i.e., a path with $k$ edges, with vertices labeled $n,\ldots,n+k$ (see, e.g., \cite[Theorem 8]{Dress}).

Now given an $n$-metric $\metric$ consider the $(n+k)$-metric $\metric^{(k)}:=\metric\oplus \tk{n}{k}$. 
By 
\Cref{appendpath}, the 
tight span $E(\metric^{(k)})$ is obtained from that of $\metric$ by appending a $k$-path at the vertex labeled $n$.
Moreover, by \Cref{lem:directsum} there is a direct (free) sum decomposition $\KRW{\metric^{(k)}}=\KRW{\metric}\oplus \KRW{\tk{n}{k}}$, implying that the poset of non-maximal faces of $\KRW{\metric^{(k)}}$ is the Cartesian product of the posets of non-maximal faces of $\KRW{\metric}$ and of $\KRW{\tk{n}{k}}$ (see, e.g., \cite{Kal88}).

Let $\metric_1$, $\metric_2$ be the two $5$-metrics of \Cref{example_decomp} which lie in the same cone of the Wasserstein fan $\GPF{5}$.
The isomorphism between  the polytopes $\KRW{\metric_1}$ and $\KRW{\metric_2}$ induces an isomorphism between $\KRW{\metric_1^{(k)}}$ and $\KRW{\metric_2^{(k)}}$. On the other hand, any isomorphism between the tight spans of $\metric_1^{(k)}$ and $\metric_2^{(k)}$ must map the $t^{(n,k)}$-part to itself (since the tight spans of $\metric_1$ and $\metric_2$ do not contain any path of length more than $1$ consisting of vertices with degree $\leq 2$, see \Cref{example_decomp}), and thus would induce an isomorphism between the tight spans of $\metric_1$ and $\metric_2$. The latter cannot exist, and so we show that $\GPF{n}$ is not a refinement of the secondary fan of $\Delta_{2,n}$ for all $n\geq 5$.

Let $\metric_1$, $\metric_2$ be the two $4$-metrics of \Cref{ex:4pt}. The combinatorial isomorphism between the tight span of $\metric_1$ and $\metric_2$ extends to an isomorphism of the tight spans of $\metric_1^{(k)}$ and $\metric_2^{(k)}$. On the other hand the fact that $\KRW{\metric_2}$ is simplicial implies simpliciality of $\KRW{\metric_2^{(k)}}$, and since $\KRW{\metric_1}$ is not simplicial, neither is $\KRW{\metric_1^{(k)}}$. In particular, $\KRW{\metric_1^{(k)}}$ is not isomorphic to $\KRW{\metric_2^{(k)}}$ for any $k>0$. We have proved that for all $n\geq 5$ the fan $\GPF{n}$ is not a coarsening of the secondary fan of $\Delta_{2,n}$.
\end{proof}

\begin{example}\label[example]{ex:4pt}
Consider the following two $4$-metrics.
$$
\metric_1:=\begin{pmatrix}
0 & 3 & 3 & 4 \\
3 & 0 & 4 & 3 \\
3 & 4 & 0 & 3 \\
4 & 3 & 3 & 0
\end{pmatrix}
\quad\quad
\metric_2:=\begin{pmatrix}
0 & 4 & 3 & 5 \\
4 & 0 & 5 & 3 \\
3 & 5 & 0 & 4 \\
5 & 3 & 4 & 0
\end{pmatrix}
$$
These metrics have combinatorially isomorphic tight spans, but non-isomorphic KRW polytopes, see \Cref{fig:4pt}.
The polytopes are not combinatorially isomorphic as $\KRW{\metric_1}$ has two square faces whereas $\KRW{\metric_2}$ is simplicial.
\end{example}

\begin{figure}[ht]
\centering
     \begin{subfigure}[t]{0.3\textwidth}
         \centering
			\resizebox{\textwidth}{!}{\begin{tikzpicture}
	[x={(-0.964234cm, -0.208442cm)},
	y={(-0.265051cm, 0.758364cm)},
	z={(0.000029cm, 0.617605cm)},
	scale=10.000000,
	back/.style={loosely dotted, thin},
	edge/.style={color=blue!95!black, thick},
	facet/.style={fill=blue!95!black,fill opacity=0.250000},
	vertex/.style={inner sep=1pt,circle,draw=green!25!black,fill=green!75!black,thick}]
%
%
\coordinate (0.19612, 0.13074, 0.66667) at (0.19612, 0.13074, 0.66667);
\coordinate (-0.22880, 0.13074, 0.66667) at (-0.22880, 0.13074, 0.66667);
\coordinate (0.03269, -0.26149, 0.66667) at (0.03269, -0.26149, 0.66667);
\coordinate (0.29417, 0.47939, 0.44444) at (0.29417, 0.47939, 0.44444);
\coordinate (-0.09806, 0.35955, 0.33333) at (-0.09806, 0.35955, 0.33333);
\coordinate (0.55566, 0.08716, 0.44444) at (0.55566, 0.08716, 0.44444);
\coordinate (-0.35955, 0.04358, 0.22222) at (-0.35955, 0.04358, 0.22222);
\coordinate (0.29417, -0.22880, 0.33333) at (0.29417, -0.22880, 0.33333);
\coordinate (-0.09806, -0.34865, 0.22222) at (-0.09806, -0.34865, 0.22222);
\coordinate (0.16343, 0.39223, 0.00000) at (0.16343, 0.39223, 0.00000);
\coordinate (0.42492, 0.00000, 0.00000) at (0.42492, 0.00000, 0.00000);
\coordinate (0.00000, 0.00000, 0.00000) at (0.00000, 0.00000, 0.00000);
\draw[edge,back] (0.19612, 0.13074, 0.66667) -- (-0.22880, 0.13074, 0.66667);
\draw[edge,back] (0.19612, 0.13074, 0.66667) -- (0.03269, -0.26149, 0.66667);
\draw[edge,back] (0.19612, 0.13074, 0.66667) -- (0.29417, 0.47939, 0.44444);
\draw[edge,back] (0.19612, 0.13074, 0.66667) -- (0.55566, 0.08716, 0.44444);
\draw[edge,back] (-0.22880, 0.13074, 0.66667) -- (0.03269, -0.26149, 0.66667);
\draw[edge,back] (0.03269, -0.26149, 0.66667) -- (0.55566, 0.08716, 0.44444);
\draw[edge,back] (0.03269, -0.26149, 0.66667) -- (0.29417, -0.22880, 0.33333);
\draw[edge,back] (0.03269, -0.26149, 0.66667) -- (-0.09806, -0.34865, 0.22222);
\draw[edge,back] (0.55566, 0.08716, 0.44444) -- (0.29417, -0.22880, 0.33333);
\draw[edge,back] (0.29417, -0.22880, 0.33333) -- (-0.09806, -0.34865, 0.22222);
\draw[edge,back] (0.29417, -0.22880, 0.33333) -- (0.42492, 0.00000, 0.00000);
\node[vertex] at (0.29417, -0.22880, 0.33333)     {};
\node[vertex] at (0.19612, 0.13074, 0.66667)     {};
\node[vertex] at (0.03269, -0.26149, 0.66667)     {};
\fill[facet] (0.42492, 0.00000, 0.00000) -- (0.55566, 0.08716, 0.44444) -- (0.29417, 0.47939, 0.44444) -- (0.16343, 0.39223, 0.00000) -- cycle {};
\fill[facet] (-0.09806, 0.35955, 0.33333) -- (-0.22880, 0.13074, 0.66667) -- (0.29417, 0.47939, 0.44444) -- cycle {};
\fill[facet] (-0.35955, 0.04358, 0.22222) -- (-0.22880, 0.13074, 0.66667) -- (-0.09806, 0.35955, 0.33333) -- cycle {};
\fill[facet] (0.16343, 0.39223, 0.00000) -- (-0.09806, 0.35955, 0.33333) -- (-0.35955, 0.04358, 0.22222) -- cycle {};
\fill[facet] (0.16343, 0.39223, 0.00000) -- (0.29417, 0.47939, 0.44444) -- (-0.09806, 0.35955, 0.33333) -- cycle {};
\fill[facet] (0.00000, 0.00000, 0.00000) -- (-0.09806, -0.34865, 0.22222) -- (0.42492, 0.00000, 0.00000) -- cycle {};
\fill[facet] (0.00000, 0.00000, 0.00000) -- (0.16343, 0.39223, 0.00000) -- (0.42492, 0.00000, 0.00000) -- cycle {};
\fill[facet] (0.00000, 0.00000, 0.00000) -- (-0.35955, 0.04358, 0.22222) -- (0.16343, 0.39223, 0.00000) -- cycle {};
\fill[facet] (0.00000, 0.00000, 0.00000) -- (-0.35955, 0.04358, 0.22222) -- (-0.09806, -0.34865, 0.22222) -- cycle {};
\draw[edge] (-0.22880, 0.13074, 0.66667) -- (0.29417, 0.47939, 0.44444);
\draw[edge] (-0.22880, 0.13074, 0.66667) -- (-0.09806, 0.35955, 0.33333);
\draw[edge] (-0.22880, 0.13074, 0.66667) -- (-0.35955, 0.04358, 0.22222);
\draw[edge] (0.29417, 0.47939, 0.44444) -- (-0.09806, 0.35955, 0.33333);
\draw[edge] (0.29417, 0.47939, 0.44444) -- (0.55566, 0.08716, 0.44444);
\draw[edge] (0.29417, 0.47939, 0.44444) -- (0.16343, 0.39223, 0.00000);
\draw[edge] (-0.09806, 0.35955, 0.33333) -- (-0.35955, 0.04358, 0.22222);
\draw[edge] (-0.09806, 0.35955, 0.33333) -- (0.16343, 0.39223, 0.00000);
\draw[edge] (0.55566, 0.08716, 0.44444) -- (0.42492, 0.00000, 0.00000);
\draw[edge] (-0.35955, 0.04358, 0.22222) -- (-0.09806, -0.34865, 0.22222);
\draw[edge] (-0.35955, 0.04358, 0.22222) -- (0.16343, 0.39223, 0.00000);
\draw[edge] (-0.35955, 0.04358, 0.22222) -- (0.00000, 0.00000, 0.00000);
\draw[edge] (-0.09806, -0.34865, 0.22222) -- (0.42492, 0.00000, 0.00000);
\draw[edge] (-0.09806, -0.34865, 0.22222) -- (0.00000, 0.00000, 0.00000);
\draw[edge] (0.16343, 0.39223, 0.00000) -- (0.42492, 0.00000, 0.00000);
\draw[edge] (0.16343, 0.39223, 0.00000) -- (0.00000, 0.00000, 0.00000);
\draw[edge] (0.42492, 0.00000, 0.00000) -- (0.00000, 0.00000, 0.00000);
\node[vertex] at (-0.22880, 0.13074, 0.66667)     {};
\node[vertex] at (0.29417, 0.47939, 0.44444)     {};
\node[vertex] at (-0.09806, 0.35955, 0.33333)     {};
\node[vertex] at (0.55566, 0.08716, 0.44444)     {};
\node[vertex] at (-0.35955, 0.04358, 0.22222)     {};
\node[vertex] at (-0.09806, -0.34865, 0.22222)     {};
\node[vertex] at (0.16343, 0.39223, 0.00000)     {};
\node[vertex] at (0.42492, 0.00000, 0.00000)     {};
\node[vertex] at (0.00000, 0.00000, 0.00000)     {};
\end{tikzpicture}}
         \caption{$\KRW{\metric_1}$}
         \label{fig:4pta}
     \end{subfigure}
     \begin{subfigure}[t]{0.3\textwidth}
         \centering
         \includegraphics[width=.7\textwidth]{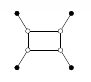}
         \caption{tight span of both metrics}
         \label{fig:4ptb}
     \end{subfigure}
     \begin{subfigure}[t]{0.3\textwidth}
         \centering
         \resizebox{\textwidth}{!}{\begin{tikzpicture}%
	[x={(0.957905cm, 0.217955cm)},
	y={(-0.287086cm, 0.727313cm)},
	z={(0.000031cm, 0.650777cm)},
	scale=10.000000,
	back/.style={loosely dotted, thin},
	edge/.style={color=blue!95!black, thick},
	facet/.style={fill=blue!95!black,fill opacity=0.250000},
	vertex/.style={inner sep=1pt,circle,draw=green!25!black,fill=green!75!black,thick}]
%
%
\coordinate (0.06489, 0.17342, 0.53452) at (0.06489, 0.17342, 0.53452);
\coordinate (-0.34607, 0.17342, 0.53452) at (-0.34607, 0.17342, 0.53452);
\coordinate (-0.04867, -0.13007, 0.53452) at (-0.04867, -0.13007, 0.53452);
\coordinate (0.19467, 0.52027, 0.35635) at (0.19467, 0.52027, 0.35635);
\coordinate (-0.12978, 0.31216, 0.21381) at (-0.12978, 0.31216, 0.21381);
\coordinate (0.35689, 0.13007, 0.40089) at (0.35689, 0.13007, 0.40089);
\coordinate (-0.29200, 0.04336, 0.13363) at (-0.29200, 0.04336, 0.13363);
\coordinate (0.19467, -0.13874, 0.32071) at (0.19467, -0.13874, 0.32071);
\coordinate (-0.12978, -0.34684, 0.17817) at (-0.12978, -0.34684, 0.17817);
\coordinate (0.11355, 0.30349, 0.00000) at (0.11355, 0.30349, 0.00000);
\coordinate (0.41096, 0.00000, 0.00000) at (0.41096, 0.00000, 0.00000);
\coordinate (0.00000, 0.00000, 0.00000) at (0.00000, 0.00000, 0.00000);
\draw[edge,back] (-0.34607, 0.17342, 0.53452) -- (-0.12978, 0.31216, 0.21381);
\draw[edge,back] (0.19467, 0.52027, 0.35635) -- (-0.12978, 0.31216, 0.21381);
\draw[edge,back] (0.19467, 0.52027, 0.35635) -- (0.11355, 0.30349, 0.00000);
\draw[edge,back] (0.19467, 0.52027, 0.35635) -- (0.41096, 0.00000, 0.00000);
\draw[edge,back] (-0.12978, 0.31216, 0.21381) -- (-0.29200, 0.04336, 0.13363);
\draw[edge,back] (-0.12978, 0.31216, 0.21381) -- (0.11355, 0.30349, 0.00000);
\draw[edge,back] (-0.29200, 0.04336, 0.13363) -- (0.11355, 0.30349, 0.00000);
\draw[edge,back] (-0.29200, 0.04336, 0.13363) -- (0.00000, 0.00000, 0.00000);
\draw[edge,back] (-0.12978, -0.34684, 0.17817) -- (0.00000, 0.00000, 0.00000);
\draw[edge,back] (0.11355, 0.30349, 0.00000) -- (0.41096, 0.00000, 0.00000);
\draw[edge,back] (0.11355, 0.30349, 0.00000) -- (0.00000, 0.00000, 0.00000);
\draw[edge,back] (0.41096, 0.00000, 0.00000) -- (0.00000, 0.00000, 0.00000);
\node[vertex] at (-0.12978, 0.31216, 0.21381)     {};
\node[vertex] at (0.11355, 0.30349, 0.00000)     {};
\node[vertex] at (0.00000, 0.00000, 0.00000)     {};
\fill[facet] (0.19467, 0.52027, 0.35635) -- (0.06489, 0.17342, 0.53452) -- (-0.34607, 0.17342, 0.53452) -- cycle {};
\fill[facet] (0.35689, 0.13007, 0.40089) -- (0.06489, 0.17342, 0.53452) -- (0.19467, 0.52027, 0.35635) -- cycle {};
\fill[facet] (0.35689, 0.13007, 0.40089) -- (0.06489, 0.17342, 0.53452) -- (-0.04867, -0.13007, 0.53452) -- cycle {};
\fill[facet] (-0.04867, -0.13007, 0.53452) -- (0.06489, 0.17342, 0.53452) -- (-0.34607, 0.17342, 0.53452) -- cycle {};
\fill[facet] (0.19467, -0.13874, 0.32071) -- (-0.04867, -0.13007, 0.53452) -- (0.35689, 0.13007, 0.40089) -- cycle {};
\fill[facet] (-0.12978, -0.34684, 0.17817) -- (-0.34607, 0.17342, 0.53452) -- (-0.29200, 0.04336, 0.13363) -- cycle {};
\fill[facet] (-0.12978, -0.34684, 0.17817) -- (-0.04867, -0.13007, 0.53452) -- (0.19467, -0.13874, 0.32071) -- cycle {};
\fill[facet] (-0.12978, -0.34684, 0.17817) -- (-0.34607, 0.17342, 0.53452) -- (-0.04867, -0.13007, 0.53452) -- cycle {};
\fill[facet] (0.41096, 0.00000, 0.00000) -- (0.19467, -0.13874, 0.32071) -- (-0.12978, -0.34684, 0.17817) -- cycle {};
\fill[facet] (0.41096, 0.00000, 0.00000) -- (0.35689, 0.13007, 0.40089) -- (0.19467, -0.13874, 0.32071) -- cycle {};
\draw[edge] (0.06489, 0.17342, 0.53452) -- (-0.34607, 0.17342, 0.53452);
\draw[edge] (0.06489, 0.17342, 0.53452) -- (-0.04867, -0.13007, 0.53452);
\draw[edge] (0.06489, 0.17342, 0.53452) -- (0.19467, 0.52027, 0.35635);
\draw[edge] (0.06489, 0.17342, 0.53452) -- (0.35689, 0.13007, 0.40089);
\draw[edge] (-0.34607, 0.17342, 0.53452) -- (-0.04867, -0.13007, 0.53452);
\draw[edge] (-0.34607, 0.17342, 0.53452) -- (0.19467, 0.52027, 0.35635);
\draw[edge] (-0.34607, 0.17342, 0.53452) -- (-0.29200, 0.04336, 0.13363);
\draw[edge] (-0.34607, 0.17342, 0.53452) -- (-0.12978, -0.34684, 0.17817);
\draw[edge] (-0.04867, -0.13007, 0.53452) -- (0.35689, 0.13007, 0.40089);
\draw[edge] (-0.04867, -0.13007, 0.53452) -- (0.19467, -0.13874, 0.32071);
\draw[edge] (-0.04867, -0.13007, 0.53452) -- (-0.12978, -0.34684, 0.17817);
\draw[edge] (0.19467, 0.52027, 0.35635) -- (0.35689, 0.13007, 0.40089);
\draw[edge] (0.35689, 0.13007, 0.40089) -- (0.19467, -0.13874, 0.32071);
\draw[edge] (0.35689, 0.13007, 0.40089) -- (0.41096, 0.00000, 0.00000);
\draw[edge] (-0.29200, 0.04336, 0.13363) -- (-0.12978, -0.34684, 0.17817);
\draw[edge] (0.19467, -0.13874, 0.32071) -- (-0.12978, -0.34684, 0.17817);
\draw[edge] (0.19467, -0.13874, 0.32071) -- (0.41096, 0.00000, 0.00000);
\draw[edge] (-0.12978, -0.34684, 0.17817) -- (0.41096, 0.00000, 0.00000);
\node[vertex] at (0.06489, 0.17342, 0.53452)     {};
\node[vertex] at (-0.34607, 0.17342, 0.53452)     {};
\node[vertex] at (-0.04867, -0.13007, 0.53452)     {};
\node[vertex] at (0.19467, 0.52027, 0.35635)     {};
\node[vertex] at (0.35689, 0.13007, 0.40089)     {};
\node[vertex] at (-0.29200, 0.04336, 0.13363)     {};
\node[vertex] at (0.19467, -0.13874, 0.32071)     {};
\node[vertex] at (-0.12978, -0.34684, 0.17817)     {};
\node[vertex] at (0.41096, 0.00000, 0.00000)     {};
\end{tikzpicture}}
         \caption{$\KRW{\metric_2}$}
         \label{fig:4ptc}
     \end{subfigure}
     \caption{Illustration for \Cref{ex:4pt}. (a), (c): KRW polytopes of the metrics $\metric_1$, $\metric_2$. (b): Combinatorial type of the tight span. The horizontal edges are of length $1$ and $2$ for the metrics $\metric_1$ and $\metric_2$, respectively. All other edges are of length $1$.}
     \label{fig:4pt}
\end{figure}

\section{Noteworthy subfans}\label{sec:subfans}
\subsection{Tree-like metrics} An $n$-metric $\metric$ is called {\em tree-like} if it can be represented as the shortest-path metric among a subset of $n$ vertices of a weighted tree $T(\metric)$. It is known since work of Buneman \cite{Buneman} that an $n$-metric $\metric$ is tree-like if and only if it satisfies the four-point condition 
\begin{equation}\label{tree-condition}
\metric_{ij} + \metric_{kl} \leq \max\{\metric_{ik} + \metric_{jl}, \metric_{il}+\metric_{jk}\} 
\textrm{ for all } i,j,k,l\in [n].
\end{equation}

These are exactly the metric spaces whose tight span is a tree (i.e., a one-dimensional contractible polyhedral complex), see e.g., \cite[Theorem 8]{Dress}.

\begin{proposition}\label[proposition]{prop:tl}
The set of tree-like metrics supports a subfan of $\GPF{n}$ and $\Tn{n}$. In particular, the set of phylogenetic trees on $n$ leaves is a union of (relatively open) cells of $\GPF{n}$.
\end{proposition}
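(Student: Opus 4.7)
The plan is to combine Buneman's four-point condition \eqref{tree-condition} with the observation that the Wasserstein arrangement $\CA{n}$ already contains the hyperplanes that compare the three quadruple-sums appearing in that condition. For a fixed quadruple $\{i,j,k,l\}\subseteq [n]$ set
\[
S_1 := \metric_{ij}+\metric_{kl},\quad S_2 := \metric_{ik}+\metric_{jl},\quad S_3 := \metric_{il}+\metric_{jk}.
\]
The four-point condition for this quadruple says exactly that the maximum of $S_1,S_2,S_3$ is attained at least twice. I would first check that the three equalities $S_1=S_2$, $S_1=S_3$, $S_2=S_3$ are defining equations of three hyperplanes in $\CA{n}$: they are precisely the $k=2$ hyperplanes $H_{\mathbf{a,b}}$ associated with the quadruple, obtained by choosing ordered pairs $\mathbf{a},\mathbf{b}$ so that the two sides of the defining equation pick out the desired pair of sums (e.g., $\mathbf{a}=(i,l)$, $\mathbf{b}=(k,j)$ for $S_1=S_2$, and analogously for the other two).

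Next, I would argue that on any relatively open cone $\sigma$ of $\GPF{n}$ the weak ordering of $S_1,S_2,S_3$ (i.e., the pattern of strict inequalities and equalities among them) is constant. Indeed, by definition of a cell, the sign of $\metric$ with respect to each of the three hyperplanes above is constant on $\sigma$, and these three signs jointly determine the weak ordering of $S_1,S_2,S_3$. Since the truth of the four-point condition depends only on this weak ordering (it fails exactly when one of the $S_i$ strictly exceeds the other two, which is a closed-cell invariant), it is constant on $\sigma$. Intersecting over all quadruples, tree-likeness is constant on each relatively open cell of $\GPF{n}$, so the set of tree-like $n$-metrics is a union of relatively open cells.

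To conclude that this union is a subfan, I would observe that the four-point condition is expressed by closed non-strict inequalities, so the set of tree-like metrics is closed in $\mcone{n}$; hence the closure of any relatively open cell of $\GPF{n}$ contained in it remains inside it and decomposes further into cells of $\GPFclosed{n}$ that all still satisfy the four-point condition. The ``in particular'' statement then follows, since a phylogenetic tree on $n$ leaves is determined by its underlying tree topology together with positive edge lengths, and fixing the topology while varying the positive edge lengths stays in a single relatively open cell of the subfan. I do not anticipate any substantial obstacle; the only care needed is in matching each of the three equalities $S_i=S_j$ with a concrete hyperplane $H_{\mathbf{a,b}}\in \CA{n}$, which is purely bookkeeping.
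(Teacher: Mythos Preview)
Your argument is correct and follows essentially the same path as the paper's: both recognize that the three pairwise comparisons $S_p=S_q$ for a quadruple are exactly the $k=2$ hyperplanes of $\CA{n}$ on that quadruple, so that the four-point condition carves out a union of closed cells of the arrangement; the paper packages this explicitly as $\bigcap_{A}\bigcup_{\pi_A}\big(H_{(i',j'),(k',l')}\cap H^{+}_{(i',k'),(j',l')}\big)$, while you phrase it as ``weak ordering constant on open cells'' together with closedness.

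One caution on your final paragraph: you assert that fixing the tree topology and varying edge lengths keeps the metric in a \emph{single} relatively open cell of $\GPF{n}$, but for $n\ge 6$ the arrangement also contains $k\ge 3$ hyperplanes, and there is no reason these cannot further subdivide a fixed-topology cone of tree metrics. Fortunately this stronger claim is not needed: once you know the tree-like metrics form a subfan, the ``in particular'' is immediate, since any subfan is in particular a union of relatively open cells.
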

\begin{proof} A metric $\metric$ satisfies the four-point condition \eqref{tree-condition} for a quadruple $A=\{i,j,k,l\}\in {n\choose 4}$ if and only if, of the three numbers $\metric_{ij} + \metric_{kl},\metric_{ik} + \metric_{jl}, \metric_{il}+\metric_{jk}$, two are equal and the third does not exceed the other two. In turn, this is equivalent to saying that there is a permutation $\pi_A=(i',j',k',l')$ of the elements of $A$ such that $$\metric\in \mathscr C_{\pi_A}:= H_{(i',j'),(k',l')} \cap H_{(i',k'),(j',l')}^+.$$
 This means that a metric $\metric$ is tree-like if and only if there is a choice of a permutation $\pi_A$ for every $A\in {n\choose 4}$ such that $\metric \in \bigcap_{A\in {n\choose 4}} \mathscr C_{\pi_A}$. Thus, letting $S_A$ denote the group of permutations of the set $A$, the set of tree-like metrics can be expressed as 
$$
\bigcap_{A\in {n\choose 4}} 
\bigcup_{\pi_A \in S_A
}
\mathscr C_{\pi_A}.
$$
Since each cone  $\mathscr C_{\pi_A}$ supports a subfan of~$\GPF{n}$, it follows that the set of all tree-like metrics supports a subfan of~$\GPF{n}$ as well.
According to~\Cref{prop:refinement}.(1) these hyperplanes are all part of the type fan $\Tn{n}$.
Hence, the set of tree-like metrics is also a subfan of $\Tn{n}$.
\end{proof}

\subsection{Circular-decomposable metrics}

An $n$-metric $\metric$ is circular-decomposable if there is a cyclic ordering $\prec$ of $[n]$ such that 
\begin{equation}\label{circular_condition}
\metric_{ij} + \metric_{kl} \leq \metric_{ik} + \metric_{jl} \textrm{ whenever } i\prec j \prec k \prec l.
\end{equation}
To the best of our knowledge this condition first appeared in work of Kalmanson \cite{kalmanson}, and was proved in \cite{CF98} to be equivalent to the ``maximal compatibility" condition studied by Bandelt and Dress in \cite{BandeltDress}. 

From \Cref{circular_condition} it follows immediately that a point in $\R^{\binom{n}{2}}$ corresponds to a circular-decomposable (pseudo-)metric if and only if it is contained in the cone
\begin{equation}\label{eq:Kcirc}
    K_\prec:=\bigcap_{i\prec j \prec k \prec l} H^+_{(i,l),(j,k)}
\end{equation}
for some circular ordering $\prec$ of $[n]$.
 The following definition is from \cite{Devadoss-petti}. 

\begin{definition}
A split $A\uplus B = [n]$ is said to be compatible with a circular ordering $\prec$ if $i\prec j$ for all $i\in A$
 and all $j\in B$. Equivalently, we can label the elements of $[n]$ as $x_1,\ldots,x_n$ in such a way that $x_i\prec x_j$ if $i\in A $ and $j\in B$.
 \end{definition}

\begin{lemma} \label[lemma]{lem:circular}
For every cyclic ordering $\prec$ the set $K_{\prec}$ is a simplicial cone of dimension ${\binom{n}{2}}$. Its rays are generated by the pseudometrics associated to the splits that are compatible with $\prec$.  
Irrespective of the chosen $\prec$, the cone $K_{\prec}$ contains the cone $\elsplitsclosed{n}$ generated by the elementary splits (\Cref{elem_splits}).
\end{lemma}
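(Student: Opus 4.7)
The plan is to identify the extreme rays of $K_{\prec}$ with the pseudometrics of the splits compatible with $\prec$, and to deduce simpliciality from the Bandelt--Dress canonical decomposition of circular metrics. Relabel $[n]=\{x_{1},\ldots,x_{n}\}$ following $\prec$. A split $A\uplus B=[n]$ is compatible with $\prec$ exactly when both $A$ and $B$ are arcs of the cyclic order. Counting arcs of length $k$ for each $k\in\{1,\ldots,n-1\}$ yields $n(n-1)$ arcs, hence $n(n-1)/2=\binom{n}{2}$ compatible splits. Since every singleton $\{i\}$ is an arc, each elementary split is compatible with every cyclic ordering, which will immediately yield the last assertion once the first two are established.

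Next I would verify that every compatible split pseudometric $\splitpm{A}{B}$ lies in $K_{\prec}$. For any $i\prec j\prec k\prec l$ the intersection $A\cap\{i,j,k,l\}$ is necessarily a consecutive block in the induced cyclic order, so the ``interleaved'' configuration that would separate $\{i,k\}$ from $\{j,l\}$ cannot occur. A short case analysis on the cardinality and position of $A\cap\{i,j,k,l\}$ then yields the Kalmanson inequality
$(\splitpm{A}{B})_{ij}+(\splitpm{A}{B})_{kl}\leq(\splitpm{A}{B})_{ik}+(\splitpm{A}{B})_{jl}$, with LHS $\in\{0,2\}$ and RHS always at least as large. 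By linearity, every nonnegative combination of compatible split pseudometrics lies in $K_{\prec}$, which in particular gives the inclusion $\elsplitsclosed{n}\subseteq K_{\prec}$ of the last sentence of the lemma.

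It then remains to prove that these $\binom{n}{2}$ pseudometrics are linearly independent and that every pseudometric in $K_{\prec}$ is a \emph{nonnegative} combination of them. Both facts are part of the classical theory of circular split decompositions going back to~\cite{kalmanson}, worked out by Bandelt and Dress~\cite{BandeltDress} and Christopher--Farach~\cite{CF98}: a (pseudo)metric satisfies \eqref{circular_condition} if and only if it admits a unique nonnegative representation as a combination of the pseudometrics of the splits compatible with $\prec$, whose coefficients are given by explicit isolation-index-type formulas. Linear independence is immediate from this uniqueness, and also follows directly from \cite[Corollary~4]{BandeltDress} applied to suitable compatible subcollections. Together these identify $K_{\prec}$ with the simplicial cone of dimension $\binom{n}{2}$ spanned by the $\binom{n}{2}$ compatible split pseudometrics, proving the first two claims of the lemma.

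The main obstacle is the \emph{nonnegativity} of the coefficients in the circular decomposition of a general pseudometric in $K_{\prec}$: this is not apparent from the defining inequalities of $K_{\prec}$ and genuinely requires the Bandelt--Dress canonical split decomposition specialised to circular split systems. Once this input is in place, the remaining steps (counting compatible splits, checking that each $\splitpm{A}{B}$ belongs to $K_{\prec}$, and deducing $\elsplitsclosed{n}\subseteq K_{\prec}$) are essentially bookkeeping.
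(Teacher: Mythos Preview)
Your proposal is correct and follows essentially the same route as the paper: the paper's proof is even terser, simply observing that elementary splits are compatible with every circular ordering (for the last claim) and citing items (i)--(iii) of \cite[Theorem~5, p.~75]{BandeltDress} for the simpliciality and the identification of the rays. Your additional bookkeeping (counting the $\binom{n}{2}$ compatible splits, the case check that each $\splitpm{A}{B}$ satisfies the Kalmanson inequalities) unpacks what the citation covers, but the substantive input---the Bandelt--Dress circular split decomposition giving uniqueness and nonnegativity---is the same.
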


\begin{proof}
The last claim follows after noticing that elementary splits are compatible with every circular ordering. The other claims are proved as items $(i)-(iii)$ in \cite[Theorem 5, p.\ 75]{BandeltDress}.
\end{proof}

\begin{theorem}\label{thm:cd}
The set of all circular decomposable pseudometrics on $n$ points is the full-dimensional simplicial subfan $\kfanclosed{n}$ of $\GPFclosed{n}$ given by all points contained in the set
$$
\bigcup_{\substack{\prec \textrm{ a circular}\\ \textrm{ordering of }[n]}} K_\prec. 
$$
Accordingly, the circular decomposable metrics form a full-dimensional subfan $\kfan{n}$ of $\GPF{n}$.
Moreover, $\kfan{n}$ is also a full-dimensional subfan of the type fan $\Tn{n}$.
\end{theorem}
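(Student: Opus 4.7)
The identification of $\kfanclosed{n}$ with the set of circular-decomposable pseudometrics is immediate from \eqref{eq:Kcirc}: by definition $\metric$ is circular-decomposable with respect to $\prec$ if and only if $\metric\in K_\prec$, so the full set of such pseudometrics is $\bigcup_\prec K_\prec = \kfanclosed{n}$. For the subfan claim, the key observation is that each $K_\prec$ is an intersection of closed halfspaces $H^+_{(i,l),(j,k)}$ whose bounding hyperplanes lie in the Wasserstein arrangement $\CA{n}$. Hence any chamber of $\CA{n}$ meeting the interior of $K_\prec$ is entirely contained in $K_\prec$ (leaving $K_\prec$ would require crossing an arrangement hyperplane). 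Intersecting with $\mconeclosure{n}$ shows that each $K_\prec$, and therefore $\kfanclosed{n}$, is a union of cones of $\GPFclosed{n}$.

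By \Cref{lem:circular} each $K_\prec$ is a simplicial cone of dimension $\binom{n}{2}$ whose extreme rays are the split pseudometrics $\splitpm{A}{B}$ with $A\uplus B$ compatible with $\prec$; this yields full-dimensionality and simpliciality of each maximal cell. The remaining structural task is to verify that the collection $\{K_\prec\}$ together with all their faces forms a fan, which amounts to showing that for any two cyclic orderings $\prec,\prec'$ the intersection $K_\prec\cap K_{\prec'}$ is the simplicial cone generated by splits compatible with both orderings -- a common face of $K_\prec$ and $K_{\prec'}$. The inclusion $\supseteq$ is immediate. For $\subseteq$, I would take $\metric\in K_\prec\cap K_{\prec'}$ together with its unique $K_\prec$-decomposition into $\prec$-compatible split pseudometrics; if some split $\sigma_0$ compatible with $\prec$ but not $\prec'$ appears with positive coefficient, then a suitable linear identity among $\prec$- and $\prec'$-compatible split pseudometrics permits rewriting the decomposition to produce a representation of $\metric$ by $\prec'$-compatible splits in which at least one coefficient is forced to be negative, contradicting $\metric\in K_{\prec'}$ via the uniqueness statement in \Cref{lem:circular}.

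The main technical obstacle is this intersection step, specifically the identification of the needed linear identities and the analysis of their sign behavior under conversion between decompositions. In the prototypical case $n=4$ the key identity is
\[
\splitpm{\{1\}}{\{2,3,4\}}+\splitpm{\{2\}}{\{1,3,4\}}+\splitpm{\{3\}}{\{1,2,4\}}+\splitpm{\{4\}}{\{1,2,3\}} = \splitpm{\{1,2\}}{\{3,4\}}+\splitpm{\{1,3\}}{\{2,4\}}+\splitpm{\{1,4\}}{\{2,3\}},
\]
expressing the sum of the four elementary splits as the sum of the three ``diagonal'' splits; the rewriting argument then reduces to elementary algebra, since any non-common split appearing with positive coefficient forces a negative coefficient on a non-common split of the other ordering. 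For general $n$ the requisite relations are governed by the Bandelt--Dress split-decomposition theory, and tracking the signs under the corresponding conversions is where the bulk of the work lies. Once the intersection property is secured, the fan $\kfan{n}\subseteq\GPF{n}$ is obtained from $\kfanclosed{n}$ by restricting to the open metric cone $\mcone{n}$.
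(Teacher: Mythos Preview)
Your first paragraph captures the paper's entire proof: the paper simply observes that, by \eqref{eq:Kcirc}, each $K_\prec$ is an intersection of closed Wasserstein halfspaces and hence a union of cones of $\GPFclosed{n}$, and then invokes \Cref{lem:circular} for simpliciality and full-dimensionality of the $K_\prec$. That is literally all the paper writes.

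Your remaining two paragraphs go well beyond this. You set out to verify directly that the family $\{K_\prec\}$ forms a polyhedral fan by proving that $K_\prec\cap K_{\prec'}$ is a common face. The paper does not attempt this; it treats the simplicial-fan structure of the Kalmanson cones as supplied by the literature: \Cref{lem:circular} cites Bandelt--Dress, and the corollary immediately following the theorem points to Devadoss--Petti and Hacking--Keel--Tevelev, where precisely this fan appears. Your sketched direct argument (rewriting a $\prec$-decomposition into a $\prec'$-decomposition and chasing signs) is a sensible strategy, but---as you yourself acknowledge---it is incomplete for general $n$, and it is unnecessary given the paper's reliance on established results. If you want a self-contained proof, the intersection property is the right thing to isolate; but for the theorem as the paper states and proves it, your first paragraph already suffices.
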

\begin{proof} \Cref{eq:Kcirc} implies immediately that $\kfanclosed{n}$ is indeed a subfan of $\GPFclosed{n}$. The claim about simpliciality and dimension follows from \Cref{lem:circular}.
As above, the statement on the type fan follows from \Cref{prop:refinement}. 
\end{proof}

\begin{remark}[The space of circular split networks]\label[remark]{DP-HKT}
A metric is circular decomposable if and only if it can be realized by a {\em circular split network} \cite[\S 2.2]{Devadoss-petti}. Devadoss and Petti \cite{Devadoss-petti} define and study a simplicial fan $\operatorname{CSN}_n$ that parametrizes all circular split networks on $n$ points associated to circular decomposable metrics without trivial splits. 
\end{remark}

\begin{corollary} The set of circular decomposable metrics without trivial splits defines a subfan of $\GPFclosed{n}$ consisting of all cones of $\kfanclosed{n}$ whose closure does not contain any ray of $\elsplitsclosed{n}$. 
\end{corollary}

\subsection{Split decomposition} 
We turn to the class of totally split-decomposable spaces, as introduced by Dress and studied by Bandelt and Dress \cite{BandeltDress}, from which we recall some notions and notations.  Note that a more generals theory of split decompositions beyond metric spaces has been developed by Herrmann and Joswig in \cite{HJ08}.
 
\begin{definition}\label[definition]{def:ii}
Let $\metric$ be a symmetric function on a set $X$ and let $A, B \subseteq X$. Define the \emph{isolation index} $\alpha_{A,B}^\metric$ as \[\alpha_{A, B}^\metric = \frac{1}{2} \cdot \min_{\substack{a,a'\in A \\b,b'\in B}} \left(\max\{a'b+b'a, a'b'+ ab, aa'+bb'\}-(aa'+bb')\right), \] where $uv = \metric(u,v)$. 
\end{definition}

\begin{remark}
As noted in \cite{BandeltDress}, $\alpha^\rho_{A,B} \geq 0$ with $\alpha^\rho_{A,B} =0$ whenever $A\cap B \not= \emptyset$. Moreover, if $\metric$ is a metric and both $A$ and $B$ have at least 2 elements, then  $\alpha^\rho_{A,B} = \alpha^\rho_{\{a,a'\}, \{b,b'\}}$ for some points $a, a'\in A$, $b, b' \in B$ with $a\neq a'$, $b\neq b'$. 
In what follows, when writing expressions such as $\alpha^\rho_{\{a,a'\}, \{b,b'\}}$
we will assume that $a,a',b,b'$ are pairwise distinct points.
\end{remark}

\begin{theorem}[\cite{BandeltDress}, Theorem 2 and Corollary 2]\label{thm:dec}
    Every symmetric function $\metric: X \times X\rightarrow \mathbb{R}$ on a finite set $X$ can be expressed as \[ \metric = \metric_0 + \sum \alpha^\metric_{A,B} \cdot \delta_{A,B}\] with $\metric_0$ split-prime\footnote{A bipartition $A \uplus B=[n]$ is called a $\metric$-split if $\alpha^\metric_{A,B} > 0$. 
A symmetric function $\rho$ which does not admit any $\rho$-splits is called \emph{split-prime}. }. The metric $\metric_0$ is uniquely determined.
\end{theorem}

\begin{definition}\label{def:cd}
    A metric $\metric$ is called \emph{ totally split-decomposable} if it can be written as a positively weighted sum of split metrics. Equivalently, $\metric_0 = 0$ in \Cref{thm:dec}. 
\end{definition}

\begin{theorem}[\cite{BandeltDress}, Theorem 6] 
\label{decomptheo}
    Let $\metric: X \times X \rightarrow \mathbb{R}$ be a symmetric function with zero diagonal. 
        The following conditions are equivalent: 
    \begin{enumerate}
    \item $\metric$ is a totally split-decomposable metric. 
    \item For all $t,u,v,w,x \in X$ 
    \[\alpha_{\{t,u\}, \{v,w\}} \leq \alpha_{\{t,x\}, \{v,w\}} + \alpha_{\{t,u\}, \{v,x\}}.\]
    \end{enumerate}
\end{theorem}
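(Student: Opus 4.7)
My plan is to address the two implications separately, leveraging the canonical split decomposition $\metric = \metric_0 + \sum_\sigma \alpha^\metric_\sigma \delta_\sigma$ provided by \Cref{thm:dec}.

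For $(1)\Rightarrow(2)$, the first step is to derive an explicit formula for the quartet isolation index of a totally split-decomposable metric. A direct case analysis of how each split in $\metric = \sum_\sigma \lambda_\sigma \delta_\sigma$ restricts to a four-point set $\{a,a',b,b'\}$ (with restrictions being either trivial, a $1$-$3$ split, or one of the three $2$-$2$ partitions) yields
\[
\alpha^\metric_{\{a,a'\},\{b,b'\}} \;=\; \bigl(S_{aa'|bb'} - \min\{S_{ab|a'b'},\, S_{ab'|a'b}\}\bigr)_+,
\]
where $S_P$ denotes the total weight of splits $\sigma$ whose restriction to $\{a,a',b,b'\}$ realises the bipartition $P$. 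With this formula in hand, I would refine each sum $S_P$ according to the position of a fifth point $x$ on either side of the respective split. Partitioning the contributing splits by whether $x$ lies on the $\{t,u\}$-side or the $\{v,w\}$-side, and matching the resulting groups to the two summands on the right-hand side of $(2)$, then closes this direction via a small combinatorial comparison.

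For $(2)\Rightarrow(1)$, I would argue by contrapositive: assuming $\metric_0 \not\equiv 0$, construct a quintuple violating (2). The key point is that split-primeness of $\metric_0$ ensures $\metric_0$ carries no nontrivial full-set splits, yet a nonzero $\metric_0$ must exhibit some four-point subset on which the three matching sums are unequal (equivalently, some $\alpha^{\metric_0}_{\{a,a'\},\{b,b'\}}$ is strictly positive). Choosing a fifth point $x$ that witnesses this imbalance while keeping the other quartet isolation indices on this quintuple small, one produces a violation of (2) for $\metric$ itself.

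The main obstacle is this hard direction. Because the quartet isolation index is a max-of-differences rather than a linear functional, the naive additive splitting $\alpha^\metric = \alpha^{\metric_0} + \alpha^{\sum_\sigma \lambda_\sigma \delta_\sigma}$ fails in general; locating the violating quintuple requires delicate bookkeeping of which matching sum realises the max for each summand at each quartet. This case analysis is the technical heart of the Bandelt--Dress argument and would need to be carefully ported into the present setup.
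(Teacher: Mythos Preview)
The paper does not contain a proof of this statement: \Cref{decomptheo} is quoted verbatim from \cite[Theorem~6]{BandeltDress} and used as a black box (for instance in \Cref{example_decomp} and the remark following it). There is therefore nothing in the paper to compare your proposal against.

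As to the proposal itself, it is really an outline rather than a proof. The $(1)\Rightarrow(2)$ direction via the formula $\alpha^\metric_{\{a,a'\},\{b,b'\}}=(S_{aa'|bb'}-\min\{S_{ab|a'b'},S_{ab'|a'b}\})_+$ and a refinement by the position of $x$ is a standard and workable route. For $(2)\Rightarrow(1)$, however, you do not actually give an argument: you assert that a suitable fifth point $x$ can be found ``keeping the other quartet isolation indices on this quintuple small'', but you neither specify how to locate such an $x$ nor indicate why split-primeness of $\metric_0$ guarantees one exists. You yourself flag this as the technical heart of the Bandelt--Dress proof and say it ``would need to be carefully ported''; at that level of detail the proposal is not yet a proof but a pointer back to the original reference, which is exactly what the paper does.
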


\begin{remark}
From \Cref{decomptheo} it is  clear that all metrics on $4$ points are split-decomposable. For $\vert X \vert \geq 5$ we can ask the question whether two metrics in the same cone of the Wasserstein arrangement are either both split-decomposable or both not split-decomposable. The following example answers this question in the negative.
 \end{remark}

\begin{example}\label[example]{example_decomp} 
	Consider the two metrics $\metric_1, \metric_2$ on five points given by the following matrices:
\begingroup
\[\metric_1 = \begin{pmatrix}
  0 & 125 &  48 & 149 &  84\\
125 &   0 & 149 &  48 &  99\\
 48 & 149 &   0 & 125 &  77\\
149 &  48 & 125 &   0 &  92\\
 84 &  99 &  77 &  92 &   0\\
\end{pmatrix}
~~~\metric_2 = \begin{pmatrix}
    0 & 7447 &  4316 & 10083 &  5584\\
 7447 &     0 & 10083 &  4316 &  5199\\
 4316 & 10083 &     0 &  7447 &  7560\\
10083 &  4316 &  7447 &     0 &  6179\\
 5584 &  5199 &  7560 &  6179 &     0
 \end{pmatrix}
\]
\endgroup 
These metrics lie  in the same cone of the Wasserstein arrangement $\CA{5}$, and therefore the associated KRW polytopes have the same combinatorial structure.
The KRW polytope of $\metric_1$ is depicted in \Cref{fig:schlegel}. Sketches of the tight spans are depicted in \Cref{fig:tight_span_rho_1,fig:tight_span_rho_2}. 

The metric $\metric_1$ is split-decomposable but for the metric $\metric_2$ we find \[\alpha^{\metric_2}_{\{2,3\}, \{1,5\}} > \alpha^{\metric_2}_{\{3,4\}, \{1,5\}} + \alpha^{\metric_2}_{\{2,3\}, \{4,5\}}.\] 
By \Cref{decomptheo}, this certifies  that $\metric_2$ is not split-decomposable.
\end{example}

\begin{figure}[h!]
\includegraphics[scale=.44]{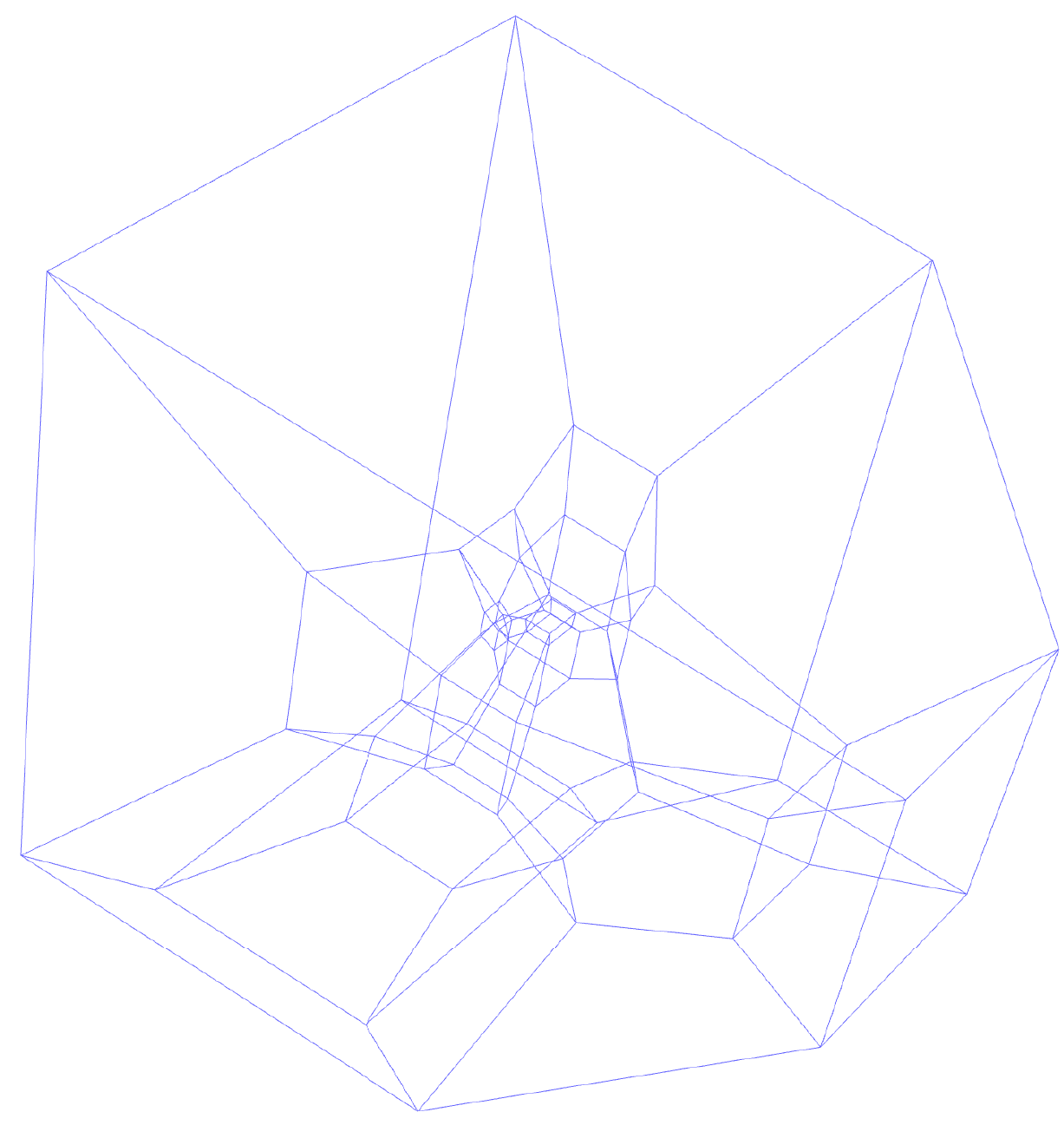}
\caption{Schlegel diagram of the polar dual of the KRW polytope (i.e., the Lipschitz polytope) of the metric $\metric_1$ from  \Cref{example_decomp}.}
\label{fig:schlegel}
\end{figure}

\begin{figure}
\includegraphics[width=0.7\textwidth]{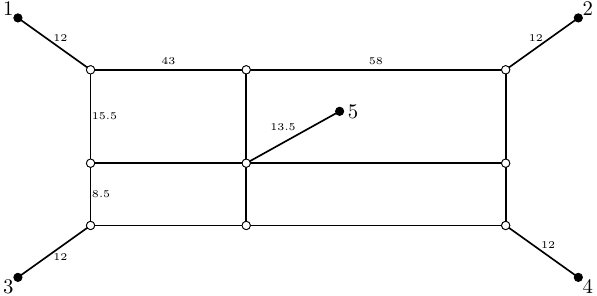} 
		\caption{Sketch of the tight span of the metric $\rho_1$ in \Cref{example_decomp}. The complex $E(\rho_1)$ has dimension $2$ and consists of four rectangular faces and five dangling edges. It has $14$ total vertices, five of which (the solid dots in the picture) are the image of the canonical embedding of the metric space into its injective hull. We have labeled some edges with a number corresponding to the distance of the edge's endpoints with respect to the distance in the injective hull, which is induced on  $E(\rho_1)\subseteq \mathbb R^{5}$ by the uniform norm (``supremum norm"). Edges that appear parallel in the picture have the same ``length". See also \Cref{gendegen}.
  }
		\label{fig:tight_span_rho_1}
  \end{figure}

		\begin{figure}
  \includegraphics[width=0.7\textwidth]{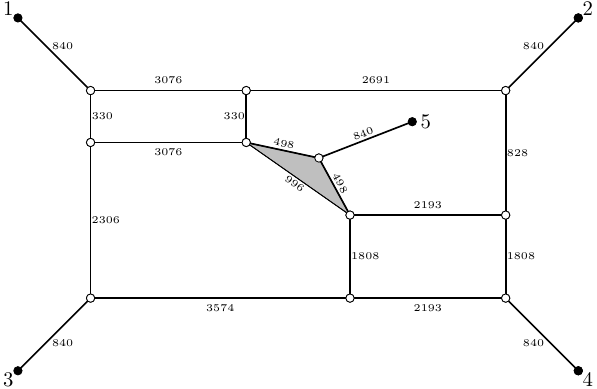}
		\caption{Sketch of the tight span of the metric $\rho_2$ in \Cref{example_decomp}. The complex $E(\metric_2)$ has $5$ two-dimensional cells (two pentagons, two quadrilaterals, one triangle) and five dangling edges. This metric is ``generic of type II" in the sense of Bandelt and Dress \cite{BandeltDress}.  
  }
		\label{fig:tight_span_rho_2}
\end{figure}

\begin{remark}\label[remark]{gendegen}
Note that the notion of ``generic" metric discussed in  \cite{BandeltDress} is not the same as the one of \Cref{def:KRW}. 
Both metrics in \Cref{example_decomp} are generic in the sense of our \Cref{def:KRW}, but only $\rho_2$ is generic in the sense of \cite{BandeltDress}, while $\rho_1$ is  ``degenerate" in the sense of Bandelt and Dress because the metrics used in the canonical decomposition of $\rho_1$ are a strict subset of those for $\rho_2$ (indeed $\rho_2$ has an additional non-zero contribution from a split-prime metric).
\end{remark}

\begin{remark}
Restricting to the metrics inside a specific cone of the Wasserstein arrangement~$\W_5$, the conditions of \Cref{decomptheo} form a local hyperplane  arrangement within this cone, where the split-decomposable metrics lie in exactly one chamber. 

Indeed, taking a look at the condition for split-decomposability of \Cref{decomptheo} for $n = 5$, since the isolation indices are defined on sets of two elements, the minimum in \Cref{def:ii} runs just over one possibility, while the maximum is determined by the chamber of $\W_5$ the metric is in: the maximum over the three values can be computed by pairwise comparison, but comparing the values $ab+a'b'$ and $a'b+ab'$ is equivalent to asking which of the two edge pairs $\{(a,b), (a',b')\}, \{(a',b), (a,b')\}$ is admissible. 
\end{remark}

\subsection{Consistent totally split-decomposable metrics} 

Let $\metric$ be a totally split-decomposable metric. In \cite[Theorem 1]{SixPoints} (see also \cite{AnExplicit})  it is proved that the tight span of $\metric$ is combinatorially isomorphic to the so-called {\em Buneman complex} if and only if $\metric$ satisfies the following \emph{six-point condition}:\\

\noindent{\eqnum\label{def:SixPoints}}
\begin{minipage}[t]{.9\linewidth}
For all $I\subseteq [n]$ with $\vert I \vert = 6$ there are distinct elements $i,j\in I$ with
\begin{equation*}
    \metric_{ij} + \metric_{kl} \leq \max\{\metric_{ik}+\metric_{jl},\metric_{jk}+\metric_{il}\}
\end{equation*}
for all pairs of distinct $k,l \in I\setminus\{i,j\}$.
\end{minipage}

\begin{proposition}\label[proposition]{prop:6p}
    The set of metrics that satisfy the six-point condition \eqref{def:SixPoints} supports a subfan of $\GPFclosed{n}$ and of the type fan $\Tn{n}$.
\end{proposition}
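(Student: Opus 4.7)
The plan is to mirror the approach used in the proof of \Cref{prop:tl}: I would rewrite the six-point locus as a Boolean combination (finite unions and intersections) of closed half-spaces of the Wasserstein arrangement $\CA{n}$, and then conclude by noting that the collection of subfans of $\GPFclosed{n}$ is stable under such operations.

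First, I would unpack the notation. For any four distinct indices $a,b,c,d\in[n]$, the inequality $\metric_{ab}+\metric_{cd}\le \metric_{ac}+\metric_{bd}$ is precisely the defining inequality of one of the closed half-spaces $H^+_{\mathbf{a},\mathbf{b}}$ of $\CA{n}$ in the case $k=2$ (once the convention on $\min\{b_1,b_2\}$ from the remark following the definition of $\CA{n}$ is applied, the indexing is unambiguous). Consequently, the inner clause
\[
\metric_{ij}+\metric_{kl}\le \max\{\metric_{ik}+\metric_{jl},\ \metric_{jk}+\metric_{il}\}
\]
from \eqref{def:SixPoints} is equivalent to membership in a union of two such closed half-spaces, which I will denote $H^+_{1}(i,j,k,l)\cup H^+_{2}(i,j,k,l)$.

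Next, I would record two elementary closure properties of the set of subfans of $\GPFclosed{n}$: finite unions and finite intersections of subfans are again subfans. The union case is immediate. For intersections, one uses the fact that any two closed cells of a polyhedral fan meet in a common face, so that the intersection of two subfans (each being a closed union of closed cells) is again a closed union of cells. Each closed half-space $H^+_{\mathbf{a},\mathbf{b}}$ is itself a subfan of $\GPFclosed{n}$, because every closed cell of $\GPFclosed{n}$ lies entirely inside $H^+_{\mathbf{a},\mathbf{b}}$ or inside $H^-_{\mathbf{a},\mathbf{b}}$ (meeting $H_{\mathbf{a},\mathbf{b}}$ along a face).

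Combining these ingredients, the set of metrics satisfying \eqref{def:SixPoints} equals
\[
\bigcap_{\substack{I\subseteq[n]\\|I|=6}}\ \bigcup_{\{i,j\}\subseteq I}\ \bigcap_{\{k,l\}\subseteq I\setminus\{i,j\}}\bigl(H^+_{1}(i,j,k,l)\cup H^+_{2}(i,j,k,l)\bigr),
\]
the four layers corresponding respectively to the quantifiers ``for all six-element $I$'', ``there exist distinct $i,j\in I$'', ``for all distinct $k,l\in I\setminus\{i,j\}$'', and the innermost max. By the closure properties above, each layer preserves the subfan property, so the full expression defines a subfan of $\GPFclosed{n}$, as required.

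The only genuine obstacle is notational bookkeeping: correctly identifying which pair $(\mathbf{a},\mathbf{b})$ labels each closed half-space in the first step (especially with the $k=2$ sign convention), and then threading the correct pair of hyperplanes through the four nested layers. Once this is set up, the argument is formal and closely parallels the tree-like case.
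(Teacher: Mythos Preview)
Your proposal is correct and follows essentially the same route as the paper: both arguments express the six-point locus as the nested Boolean combination $\bigcap_{I}\bigcup_{\{i,j\}}\bigcap_{\{k,l\}} T_{ijkl}$ of closed half-spaces of $\CA{n}$ (the paper writes $T_{ijkl}=H^+_{(i,k),(j,l)}\cup H^+_{(i,l),(j,k)}$ explicitly) and then invoke closure of subfans under finite unions and intersections. The only difference is cosmetic---you spell out the closure properties in more detail, while the paper leaves them implicit.
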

\begin{proof}
Notice that the inequality in \Cref{def:SixPoints} is satisfied whenever 
$\rho\in T_{ijkl}$, where
$$
T_{ijkl}=H^+_{(i,k),(j,l)} \cup H^+_{(i,l),(j,k)}
$$
is a union of (closed) chambers of $\CA{n}$ and, by \Cref{prop:refinement}, also a union of cones of $\Tn{n}$. Then, $\metric$ satisfies  the six-point condition in full if and only if
$$
\metric \in \bigcap_{
\substack{I\subseteq [n]\\
\vert I \vert = 6}}
\;\;
\bigcup_{
\substack{i,j\in I \\
i\neq j }}
\;\;
\bigcap_{
\substack{k,l\in I\setminus \{i,j\}\\
k\neq l
}}
T_{ijkl}.
$$
This proves that the set of all $\metric$ that satisfy the six-point condition is obtained uniting and intersecting sets of (closed) chambers, resp.\ of cones of $\Tn{n}$, hence it is a union of faces of the arrangement $\GPFclosed{n}$ and of the fan $\Tn{n}$. 
\end{proof}

\begin{proposition} \label[proposition]{rem:consistent}
A totally split-decomposable metric on $n\geq 6$ points that satisfies \eqref{def:SixPoints} is called {\em consistent} in \cite{SixPoints}. The set of consistent totally split-decomposable metrics does not support a subfan of the Wasserstein fan, hence it does not support a subfan of the type fan either.
\end{proposition}
\begin{proof} Let $\metric_1$, $\metric_2$ be the $5$-metrics defined in \Cref{example_decomp}, let $\tk{5}{k}$ be the ``path metric on $k$ edges" defined in \Cref{def:tk}, and recall the direct sum construction of \Cref{lem:directsum}. 

For $k\geq 1$, recall  the metrics
$
\rho_1^{(k)}:=\rho_1 \oplus \tk{5}{k}$, $\rho_2^{(k)}:= \rho_2 \oplus \tk{5}{k}
$ from the proof of \Cref{newthm}, where it is shown that, for all $k$, the metrics $\rho_1^{(k)}$ and $\rho_2^{(k)}$ are in the same cone of the Wasserstein fan $\W_{5+k}$, yet one is split-decomposable but the other is not. It will be enough to show that, for every $k\geq 1$, both these metrics satisfy the six-point condition.
Indeed: the $5$-metrics $\rho_1$ and $\rho_2$ trivially satisfy the six-point condition \eqref{def:SixPoints}. Moreover, one can prove that if an $n$-metric $\rho$ satisfies the six-point condition then so does the $(n+1)$-metric $\rho\oplus\tk{n}{1}$ (if the six-point set $I$ does not contain $n+1$ there is nothing to show; if it contains both $n$ and $n+1$ the condition reduces to the triangle inequality; if $n+1\in I$ and $n\not\in I$ then the six-point condition for $I$ is implied by the six-point condition for $(I\setminus \{n+1\}) \cup \{n\}$). Hence, for every $k\geq 1$, both $\rho_1^{(k)}$ and $\rho_2^{(k)}$ satisfy the six-point condition.
\end{proof}

\subsection{Antipodal metrics}
\newcommand{\ant}[1]{\overline{#1}}
\def\fant{\overline{\strut}}
 
Following e.g.~\cite{DHMantipodal}, an $n$-metric $\metric$ is called {\em antipodal} if there is a function $[n]\to[n]$ denoted by $i\mapsto \ant{i}$,
such that for all $i,j\in [n]$
\begin{equation}\label{cond:anti}
\metric_{i,\ant{i}} = \metric_{i,j}+\metric_{j,\ant{i}}.
\end{equation}
\begin{remark}
Equation~\eqref{cond:anti} already implies  $\ant{\ant{i}}=i$ for all $i\in [n]$:
indeed, this equation yields $\metric_{i,\ant{i}}=\metric_{i,\ant{\ant{i}}}+\metric_{\ant{\ant{i}},\ant{i}}$ and $\metric_{\ant{i},\ant{\ant{i}}}=\metric_{\ant{i},i}+\metric_{i,\ant{\ant{i}}}$ and hence $\metric_{i,\ant{\ant{i}}}=0$.
\end{remark}

Antipodality imposes strong constraints on the combinatorial structure of the tight span (e.g., a finite metric space is antipodal if and only if its tight span has a unique maximal cell, see \cite{HuKoMo1}). 
The next lemma relates antipodality of a metric
 to the face structure of the corresponding KRW polytope.

\begin{proposition}\label[proposition]{prop:anti}
The set of antipodal $n$-metrics is a union of relatively open cones of the type fan $\Tn{n}$ and, thus, of the Wasserstein fan $\GPF{n}$.
\end{proposition}
\begin{proof}
From \Cref{cor:disjointight} it follows that two metrics with the same labeled combinatorial type of the KRW polytope must have the same set of ``tight triples'', and the condition of antipodality is equivalent to specifying that certain triples are tight.
\end{proof}

\begin{remark}\label[remark]{antisubfan}
The set of metrics that are antipodal with respect to a fixed involution is topologically closed inside $\mcone{n}$, as it is cut out by intersecting $n$ subspaces of the form \eqref{cond:anti}. Since there are a finite number of involutions on a finite set and a finite intersection of closed sets is closed, the set of all antipodal metrics is also closed inside $\mcone{n}$. Thus, the set of antipodal metrics supports a subfan of the (half-open) fan $\Tn{n}$ (see  \Cref{remcone} for terminology).
\end{remark}

\section{The $f$-vector in the strict case}\label{sec:strict_case}

This section is motivated by the following result of Gordon and Petrov. Recall from \Cref{def:KRW} the definition of a generic metric.

\begin{theorem}[\cite{GordonPetrov}, Theorem 1]
Let $\metric$ be a generic $n+1$-metric. 
   Then, for $0 \leq m \leq n$, the number of $m$-dimensional faces of $\KRW{\metric}$ is $\binom{n + m}{m,m,n-m}$.  
\end{theorem}

The proof of this theorem uses the notion of {\em admissible graph} discussed in \Cref{sec:GP}. 

In fact, using \cite[Corollary 1.(3)]{GordonPetrov} the number of faces of dimension $(n-m)$ in the generic case agrees with the number of different admissible (directed) forests with $m$ edges on $n+1$ vertices. 

The strict, non-generic case is somewhat more complicated, but general formulae for faces of dimension $0$ and $1$ can be stated. The number of vertices has been computed by Gordon and Petrov.

\begin{corollary}[\cite{GordonPetrov} Corollary 1.(2)] 
Let $\metric$ be a strict metric, then the number of vertices of the KRW polytope is $f_0(\KRW{\metric}) = n \cdot (n+1)$, i.e., all points $p_{i,j}$ are vertices of KRW.
\end{corollary}

We give a formula for the number of edges of KRW polytopes of strict metrics using the graphic characterization of the faces: a set of vertices $F$ of the polytope defines a face if the intersection of the vertex-sets of all facets containing $F$ is identical to $F$. In order to check whether a given (admissible) graph $G$ is the graph $G(F)$ for some  face $F$ of the polytope (see \Cref{sec:GP}), we need to check that for every edge $e$ not in $G$, there is a maximal admissible graph (i.e., a facet) containing all edges of $G$, but not $e$. 

We now move to a computation of the number of edges of KRW polytopes of strict metrics, in terms of the following invariant.

\begin{definition} Recall the sets $\CA{n}^k$ from \Cref{def:2k_cycle_planes}. 
 For a strict metric $\metric$  on $l$ points  define $r_k(\metric) := \vert\{H \in \CA{l}^k \mid \metric \in H\}\vert$.
\end{definition}

We start with a preparatory lemma.

\begin{lemma}\label[lemma]{lem:2-adm}
    Let $\metric$ be a strict $l$-metric which is not contained in any hyperplane of $\CA{l}^2$. Then for each quadruple $x_1, y_1,x_2,y_2 \in [l]$, if the directed graph with edge-set $\{(x_1,y_1),(x_2,y_2)\}$ 
    is admissible, then it is face-defining. 
\end{lemma}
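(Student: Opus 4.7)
The plan is to apply \Cref{RootTriangulations} to reduce the claim to exhibiting a linear functional $c\in\mathbb R^l$ whose supporting hyperplane meets $\KRW{\metric}$ in a $1$-face with vertex set exactly $\{p_{x_1y_1},p_{x_2y_2}\}$. Concretely, I seek $c$ with
\[
c\cdot(e_{x_1}-e_{y_1})=\metric_{x_1y_1},\quad c\cdot(e_{x_2}-e_{y_2})=\metric_{x_2y_2},\quad c\cdot(e_i-e_j)<\metric_{ij}\text{ for all other }i\neq j.
\]
Such a $c$ identifies $\{0,e_{x_1}-e_{y_1},e_{x_2}-e_{y_2}\}$ as a cell of the regular subdivision $\cR_l^0(\metric)$, so that $\{p_{x_1y_1},p_{x_2y_2}\}$ is a $1$-face of $\KRW{\metric}$ whose edge set is exactly $E=\{(x_1,y_1),(x_2,y_2)\}$, proving face-definingness.

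Admissibility of $E$ together with \Cref{lem:strict_admissible_condition} forces $x_1,y_1,x_2,y_2$ to be pairwise distinct; combined with the hypothesis $\metric\notin\bigcup\CA{l}^2$, \Cref{thm:gordon_petrov} upgrades the corresponding $4$-point inequality to the strict inequality
\[
\metric_{x_1y_1}+\metric_{x_2y_2}<\metric_{x_1y_2}+\metric_{x_2y_1},
\]
which I denote $(\star)$. Admissibility also supplies a facet $F\supseteq\{p_{x_1y_1},p_{x_2y_2}\}$ with an associated supporting functional $c_0$ whose tight inequalities are indexed by $T_0:=G(F)\supseteq E$. I construct $c$ as a small perturbation $c=c_0+tv$ with $t>0$. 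Preserving tightness on $E$ demands $v_{x_1}=v_{y_1}$ and $v_{x_2}=v_{y_2}$, while breaking every tight inequality $(i,j)\in T_0\setminus E$ demands $v_i<v_j$. These constraints are simultaneously realizable precisely when the directed quotient graph $G'$, obtained from $T_0$ by contracting $\{x_1,y_1\}$ and $\{x_2,y_2\}$ to single vertices $[x_1]$ and $[x_2]$, is acyclic. Non-tight inequalities at $c_0$ persist for $t$ sufficiently small, and the reversed pairs $(y_k,x_k)$ are already strict at $c_0$ since $\metric>0$.

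The main obstacle is proving acyclicity of $G'$; both hypotheses of the lemma enter essentially here. By \Cref{strict_nopaths}, $T_0$ contains no directed path on more than two vertices, so every vertex is either a source or a sink in $T_0$; consequently no unmerged vertex of $G'$ can lie on a directed cycle (else it would be both a source and a sink in $T_0$). Every directed cycle in $G'$ therefore passes exclusively through the two merged vertices $[x_1]$ and $[x_2]$, and hence has length exactly two. Such a cycle $[x_1]\to[x_2]\to[x_1]$ means $T_0$ contains both $(x_1,y_2)$ and $(x_2,y_1)$; but then \Cref{thm:gordon_petrov} applied to this $2$-edge subset yields $\metric_{x_1y_2}+\metric_{x_2y_1}\le\metric_{x_1y_1}+\metric_{x_2y_2}$, contradicting $(\star)$. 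Thus $G'$ is acyclic, the perturbation direction $v$ exists, and the required $c$ is obtained.
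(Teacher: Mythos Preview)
Your proof is correct and takes a genuinely different route from the paper's. The paper argues combinatorially: for every edge $e=(i,j)$ admissible together with $E=\{(x_1,y_1),(x_2,y_2)\}$, it exhibits (by a case analysis on $\lvert\{i,j\}\cap\{x_1,y_1,x_2,y_2\}\rvert$) an admissible graph containing $E$ but not $e$ whose union with $e$ is inadmissible; the key step is that for $i\notin\{x_1,y_1,x_2,y_2\}$ at least one of $(x_1,i),(x_2,i)$ is admissible with $E$ (and dually for $j$), proved by adding two inequalities and invoking $(\star)$. Your argument instead constructs a supporting functional directly: you take a facet functional $c_0$ and perturb it along a direction $v$ compatible with a topological ordering of the quotient graph $G'$, reducing the problem to acyclicity of $G'$. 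Strictness (sources/sinks only) rules out cycles through unmerged vertices, and $(\star)$ rules out the unique remaining $2$-cycle $[x_1]\to[x_2]\to[x_1]$, since the only edges realizing these arrows are $(x_1,y_2)$ and $(x_2,y_1)$. Your approach is the standard ``perturb a facet to carve out a subface'' technique from polytope theory; it is more systematic and would adapt more readily to higher-dimensional faces (where the quotient graph has more merged classes), whereas the paper's hands-on case analysis is more elementary but tied to the two-edge situation.
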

\begin{proof}
    Let $\{(x_1,y_1), (x_2,y_2)\}$ be a pair of admissible edges and let $e = (i,j)$ be another edge which is admissible together with the pair. We need to show that there is an admissible graph $G$ containing $\{(x_1,y_1), (x_2,y_2)\}$ but not $e$, such that  the addition of $e$ to $G$ results in an inadmissible set. We use the strictness of the metric and the strict inequality:
     \begin{equation} \label{ineq} 
     	\metric_{x_1,y_1} + \metric_{x_2,y_2} < \metric_{x_1, y_2} + \metric_{x_2, y_1}.
    \end{equation}
    Note that this inequality holds strictly for $\metric$ as the graph is admissible and by assumption the metric does not lie on any hyperplane in $\CA{l}^2$.

    The trivial case is $\vert e \cap \{x_1,y_1,x_2,y_2\}\vert = 2$, then $e$ is one of $(x_1, y_2), (x_2, y_1)$ as $\metric$ is strict. In this case, the graph we are looking for is the pair of edges $\{(x_1,y_1), (x_2,y_2)\}$ together with the other edge, i.e., $(x_2, y_1)$ or $(x_1, y_2)$: indeed, the set of edges $\{(x_1,y_1), (x_2,y_2),(x_1, y_2), (x_2, y_1)\}$ is inadmissible since by assumption  $\metric$ is contained in no hyperplane of $\CA{l}^2$.

 Now let $\vert e \cap \{x_1,y_1,x_2,y_2\}\vert \in\{0,1\}$. 
If $j\not\in\{x_1,y_1,x_2,y_2\}$, consider the pair of edges $(j, y_1), (j,y_2)$. If neither edge forms an admissible graph together with $\{(x_1,y_1), (x_2,y_2)\}$, then \[\metric_{j, y_2} + \metric_{x_2, y_1} < \metric_{j, y_1} + \metric_{x_2, y_2}\quad\textrm{ and } \quad\metric_{j, y_1} + \metric_{x_1, y_2} < \metric_{j, y_2} + \metric_{x_1, y_1}.\] Adding the two inequalities gives a contradiction to the inequality \eqref{ineq}. Thus the set $\{(x_1,y_1), (x_2,y_2)\}$ together with one of the edges $(j, y_1), (j,y_2)$ forms an admissible graph $G$. Adding $e$ to $G$ produces an inadmissible graph: indeed, $e=(i,j)$ together with either of the edges $(j, y_1), (j,y_2)$ is a directed path of length $2$, which cannot be admissible since $\metric$ is a strict metric, see \Cref{strict_nopaths}. Thus $G$ is the required admissible graph. 
If $j\in\{x_1,y_1,x_2,y_2\}$, then $i\not\in\{x_1,y_1,x_2,y_2\}$ and the same argument with the edge pair $(x_1, i), (x_2, i)$ concludes the proof.
\end{proof}

\begin{proposition}
Let $\metric$ be a strict metric on $l = n+1$ points. Then the number of edges satisfies \[f_1(\KRW{\metric}) = \binom{n+2}{2, 2, n-2} -2\cdot r_2(\metric). \] 
\end{proposition}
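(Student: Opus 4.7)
The plan is to count edges of $\KRW{\metric}$ by enumerating face-defining $2$-edge directed subgraphs. By \Cref{strict_nopaths} every such subgraph is of one of three types: a V-shape $\{(a,b),(a,c)\}$, an A-shape $\{(b,a),(c,a)\}$, or a disjoint pair $\{(x_1,y_1),(x_2,y_2)\}$ on four distinct vertices. I count the face-defining V/A-shapes and the face-defining disjoint pairs separately.

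First I claim every V- and A-shape is face-defining. Fix a vertex $a$ and consider the out-star $S_a:=\{(a,k):k\neq a\}$. All edges of $S_a$ share the source $a$, so any array of edges with distinct sources has length $k=1$ and the Gordon--Petrov condition of \Cref{thm:gordon_petrov} is vacuous; hence $S_a$ is admissible. It is also maximal, since adding any further edge would create a directed path of length $\geq 2$, forbidden by \Cref{strict_nopaths}. Therefore \Cref{thm:gordon_petrov} implies $S_a$ defines a facet whose vertex set is $\{p_{a,k}\}_{k\neq a}$; with $n$ vertices in an $(n-1)$-dimensional affine space this facet is an $(n-1)$-simplex, and every pair of its vertices spans an edge of the simplex and hence of $\KRW{\metric}$. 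The same argument applied to the in-star at $a$ handles A-shapes. Choosing a triple, an apex, and a type (V or A) gives $6\binom{n+1}{3}$ face-defining V/A-shapes.

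For disjoint pairs I work one $4$-subset $U\subseteq[n+1]$ at a time. There are three pairings of $U$ into two $2$-subsets, giving sums $S_1,S_2,S_3$, and each of the $12$ directed disjoint pairs on $U$ is admissible precisely when the Gordon--Petrov inequality for its specific orientation holds (comparing two of these sums). When $\metric$ lies on no hyperplane of $\CA{l}^2$ associated with $U$, the $S_i$ are strictly ordered and a direct enumeration yields $4+2+0=6$ admissible disjoint pairs on $U$, each face-defining by \Cref{lem:2-adm}. If however $\metric$ lies on a hyperplane $H_{\mathbf a,\mathbf b}\in\CA{l}^2$ with $\mathbf a,\mathbf b\subseteq U$, then two of the $S_i$ coincide: two additional orientations become admissible through the equality, and the full directed $4$-cycle on $U$ becomes admissible, so its four polytope vertices $p_{a_1b_1},p_{a_2b_2},p_{a_1b_2},p_{a_2b_1}$ span a common $2$-face. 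This $2$-face is a quadrilateral whose boundary edges are V/A-shapes (already counted) and whose two diagonals are the disjoint pairs $\{(a_1,b_1),(a_2,b_2)\}$ and $\{(a_1,b_2),(a_2,b_1)\}$; by central symmetry the antipodal face contributes two more diagonal pairs. Hence $H_{\mathbf a,\mathbf b}$ yields four admissible disjoint pairs that fail to be face-defining: two which were face-defining in the generic case (the formerly strict inequalities becoming equalities pushes them onto diagonals), and two which are newly admissible by the equality. The net change is a loss of $2$ face-defining disjoint pairs per hyperplane. Since the $4$-cycle of a hyperplane is uniquely recovered from the two directed edges of any one of its diagonals, each disjoint pair is a diagonal for at most one hyperplane on $U$, and contributions from distinct hyperplanes in $\CA{l}^2$ add without double counting, producing a total correction of $-2r_2(\metric)$.

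Combining, $f_1(\KRW{\metric})=6\binom{n+1}{3}+6\binom{n+1}{4}-2r_2(\metric)$, and the elementary identity $6\binom{n+1}{3}+6\binom{n+1}{4}=\binom{n+2}{2,2,n-2}=\frac{(n+2)(n+1)n(n-1)}{4}$ finishes the proof. The main obstacle is the third paragraph: one must verify that each hyperplane in $\CA{l}^2$ containing $\metric$ contributes a net decrease of exactly $2$, balancing two new admissible-but-non-face-defining diagonals against two previously face-defining pairs that have collapsed onto diagonals, and ruling out any double counting when multiple hyperplanes lie on the same $4$-subset.
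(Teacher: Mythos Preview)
Your argument is essentially the paper's: split two-edge admissible graphs into V/A-shapes and disjoint pairs, show the former are always face-defining (your simplex-facet argument via the out-star is in fact more explicit than the paper, which merely asserts type-(b) graphs are ``trivially admissible''), and then subtract two face-defining disjoint pairs per hyperplane of $\CA{l}^2$ containing $\metric$; the paper organises this last step as a case analysis per quadruple (zero, one, or three hyperplanes) rather than your per-hyperplane bookkeeping, but the content is identical. The ``obstacle'' you flag at the end---that a disjoint pair satisfying a \emph{strict} Gordon--Petrov inequality remains face-defining even when $\metric$ lies on other hyperplanes of $\CA{l}^2$---is also left implicit in the paper's proof; both arguments close once one observes that the proof of \Cref{lem:2-adm} actually uses only the strict inequality for the specific pair at hand, not the global hypothesis that $\metric$ avoids all of~$\CA{l}^2$.
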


 Note that the multinomial coefficient is the number of faces in the generic case, where $\metric$ does not lie on any hyperplane. 

 \begin{proof}
Let $\rho$ be a strict metric. By \Cref{strict_nopaths}, admissible  graphs on two edges are of one of the two types depicted in \Cref{figure-ab1b2}.
\begin{figure}[h]
\begin{subfigure}[t]{0.45\textwidth}
\centering
\begin{tikzpicture}
\node(x1) at (0,0) {$x_1$};
\node(x2) at (1,0) {$x_2$};
\node(y1) at (0,1) {$y_1$};
\node(y2) at (1,1) {$y_2$};
\draw[->] (x1.north) -- (y1.south);
\draw[->] (x2.north) -- (y2.south);
\end{tikzpicture}
\caption{}
\end{subfigure}
\begin{subfigure}[t]{0.45\textwidth}
\centering
\begin{tikzpicture}
\node(x12) at (0,1) {$x_1=x_2$};
\node(y1) at (-1,0) {$y_1$};
\node(y2) at (1,0) {$y_2$};
\draw[->] (x12.south) -- (y1.north);
\draw[->] (x12.south) -- (y2.north);
\end{tikzpicture}
\begin{tikzpicture}
\node(x12) at (0,1) {$x_1=x_2$};
\node(y1) at (-1,0) {$y_1$};
\node(y2) at (1,0) {$y_2$};
\draw[<-] (x12.south) -- (y1.north);
\draw[<-] (x12.south) -- (y2.north);
\end{tikzpicture}
\caption{}
\end{subfigure}
\caption{The two types of admissible graphs on two edges.}\label{figure-ab1b2}
\end{figure}

Each of the $\frac{(n+1)!}{(n-1)!}$ graphs of type (b) is always trivially admissible. Let us count the admissible graphs of type (a): for every choice of an ordered $4$-tuple $(x_1,y_1,x_2,y_2)$ consider the quantities
$$
s^+:= \metric_{x_1,y_1} + \metric_{x_2,y_2},\quad\quad
s^-:= \metric_{x_1,y_2} + \metric_{x_2,y_1}.
$$
There are three cases: either $s^+<s^-$, in which case the graph with the edge set $\{(x_1,y_1),\\(x_2,y_2)\}$ is admissible and the one with the edge set $\{(x_1,y_2),(x_2,y_1)\}$ is not, or $s^+>s^-$, in which case the graph with the edge set $\{(x_1,y_2),(x_2,y_1)\}$ is admissible and the one with the edge set $\{(x_1,y_1),(x_2,y_2)\}$ is not, or $s^+=s^-$, in which case $\rho$ is contained in $H_{(x_1,x_2),(y_1,y_2)}\in \mathcal{W}_l^2$. 

If $r_{2}(\rho)=0$, then there is no instance of the third case and so exactly one of the other alternatives must hold. Moreover, in this setting the admissible graphs obtained in the first two cases are face-defining according to \Cref{lem:2-adm}, and we obtain a total of $\frac{(n+1)!}{4(n-3)!}$ admissible graphs of type (a). Thus, when $\rho$ is not contained in any member of $\mathcal{W}_l^2$ the proposition holds.

If $r_{2}(\rho)>0$, let $H_1,\ldots,H_{r_2(\rho)}$ be the hyperplanes of $\mathcal{W}_l^2$ containing $\rho$. The count of subgraphs of type (b) does not change. We now consider the graphs of type (a). Let $C_{H}$ be a cycle corresponding to $H \in \CA{l}^2$ (see  \Cref{rem:cycle_arr}). What happens if we move from either half-space onto the hyperplane $H$ is that all graphs remain admissible.
Moreover, if an admissible graph $G$ contains the set $C_{H}^+$ or $C_{H}^-$, respectively, the union of $G$ with $C_{H}^-$ or $C_{H}^+$, respectively, becomes admissible, too.
Thus, $C_{H}^+, C_{H}^-$ are no longer  faces of the polytope and we therefore subtract them from the count. Faces of dimension $1$ which correspond to paths on three vertices are not affected. 

Without loss of generality, we can regroup the hyperplanes $H_1,\ldots,H_{r_2(\rho)}$ by the ground set of their underlying cycles.
Each quadruple $\{u,v,w,t\}$ either appears zero, once or three times, since the intersection \[H_{(v,w),(u,t)} \cap H_{(v,u),(w,t)}\cap H_{(v,t), (w,u)}\] has codimension $2$ for any quadruple $\{u,v,w,t\}$.
Thus if $\metric$ is in the intersection of two among the three hyperplanes defined by cycles on a single quadruple, it automatically is in the intersection of all three. 

The first case is trivial, the case of the ground set appearing once is described above and in the last case, we subtract all $6$ faces ($2$ per cycle) from the generic count. 
Doing this iteratively for each quadruple gives the result.
 \end{proof}

\begin{example}
For a low-dimensional example, see \Cref{tab:metrics4}, where the $f$-vectors and corresponding inequalities are listed for $l = 4$. 
\end{example}

 \section{Explicit computations}\label{sec:compuations}
 
 In this section, we explicitly study the KRW polytopes of metrics on $4$, $5$, and $6$ points.
 An accompanying data set of one generic metric of each combinatorial type of these sizes is published on Zenodo~\cite{zenodo}.
 
\subsection{The case $n=4$}\label{sec_n4}

There is only one generic KRW polytope up to symmetry. The Wasserstein arrangement is depicted in \Cref{fig:arrangement4}, where the cycles corresponding to the hyperplanes are shown next to them. 

In this case, the arrangement agrees with the braid arrangement $\mathcal{A}_3$ up to the lineality space and thus has 6 chambers. 

The strict but not generic cases are illustrated in items (2)-(4) in  \Cref{tab:metrics4}. Items 2 and 3 are metrics on the rank one intersections of the hyperplane arrangement, and item 4 is on a rank 2 intersection. Observe that while the $f$-vector only depends on the rank of the intersection, their tight spans differ. 
\begin{center}
\begin{figure}
    \includegraphics[width=0.8\textwidth]{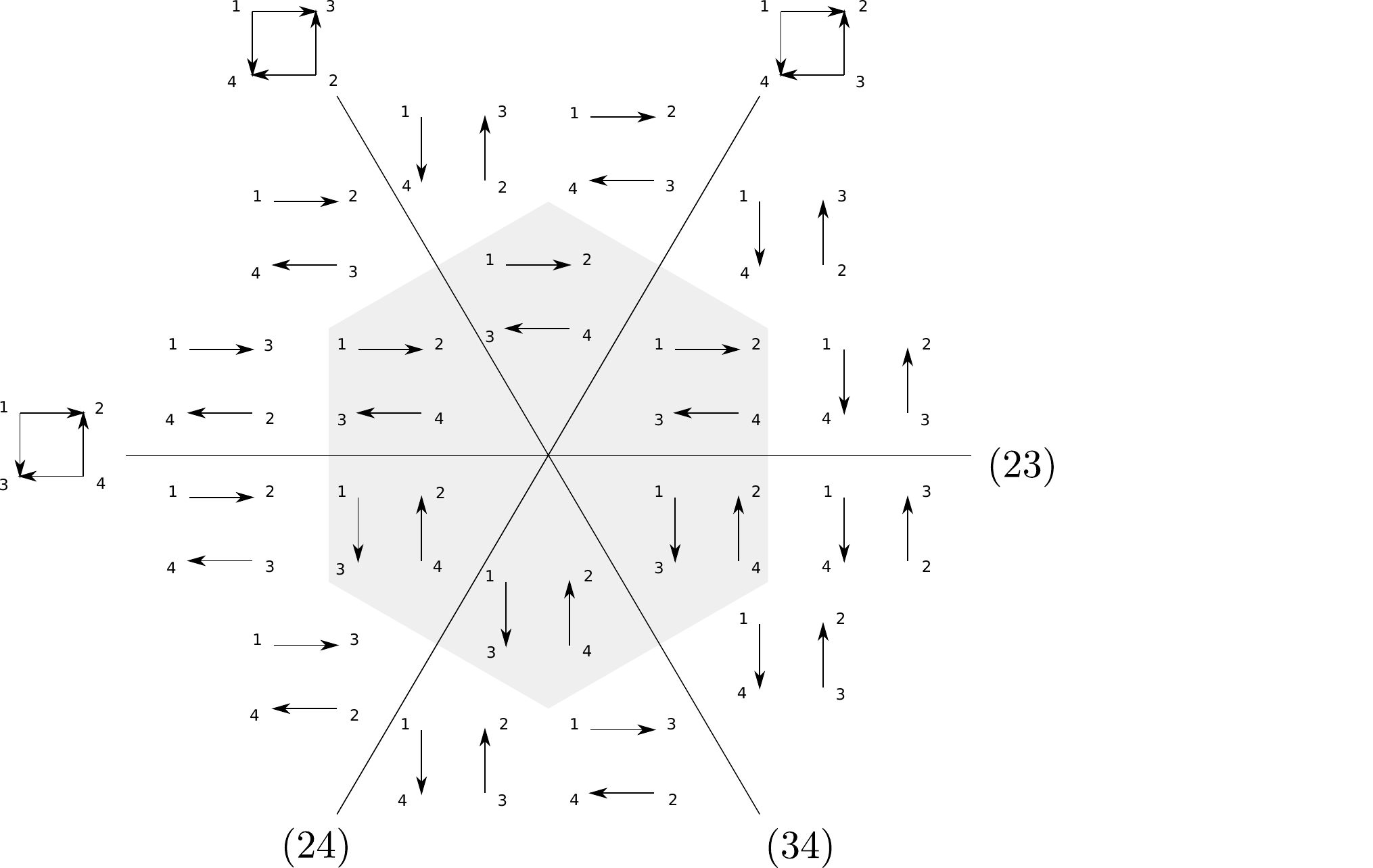} 
		\caption{Wasserstein arrangement for $n = 4$. Each hyperplane corresponds to a cycle as described in  \Cref{rem:cycle_arr} and each chamber has one set of oriented edges of each cycle that is admissible. We  obtain the admissible graphs of a neighboring chamber by applying the permutation specified on the hyperplane separating the chambers. The edge pairs in the grey area are the ones corresponding to the horizontal hyperplane. } 
		\label{fig:arrangement4}
\end{figure}
\end{center}
\newcolumntype{C}{>{\centering\arraybackslash}m{7em}}
\renewcommand\arraystretch{0.5}
\begin{table}\sffamily
\begin{tabular}{l*5{C}@{}}
\toprule
\textsc{Nr.} & \textsc{case} & \textsc{sample metric} & \textsc{$f$-vector} & \textsc{polytope} & \textsc{tight span}  \\ 
\midrule
1 & $\metric_{12|34} > \metric_{13|24} > \metric_{14|23}$ & $\begin{pmatrix}
     0&8&7&5 \\ 8&0 &5&7\\7&5&0&8\\ 5 & 7 &8 &0
\end{pmatrix}$ & (12 30 20) & \includegraphics[width=6em]{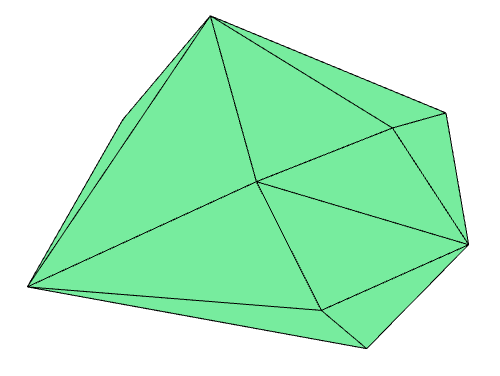} & \includegraphics[width=6em]{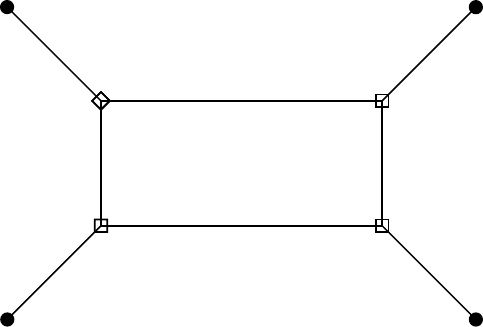}\\ 
2 &  $\metric_{12|34} > \metric_{13|24} = \metric_{14|23}$& $\begin{pmatrix}
     0&6&5&5 \\ 6&0 &5&5\\5&5&0&6\\ 5 & 5 &6 &0
\end{pmatrix}$ & (12 28 18) & \includegraphics[width=6em]{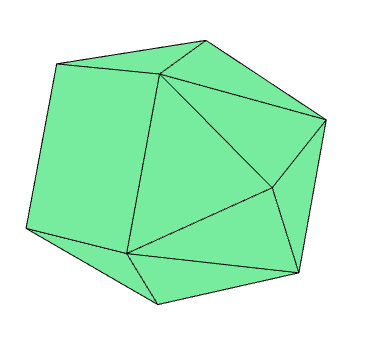} & \includegraphics[width=4em]{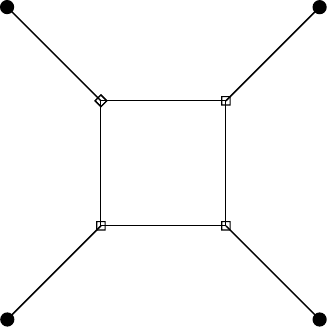} \\ 
3 & $\metric_{12|34} = \metric_{13|24} > \metric_{14|23}$ & $\begin{pmatrix}
     0&3&3&2 \\ 3&0 &2&3\\3&2&0&3\\ 2 & 3 &3 &0
\end{pmatrix}$ & (12 28 18) & \includegraphics[width=6em]{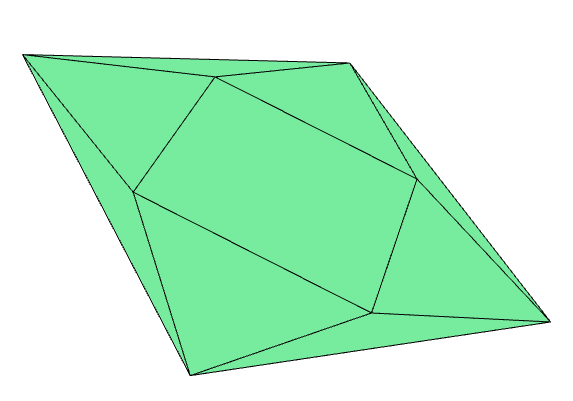} & \includegraphics[width=6em]{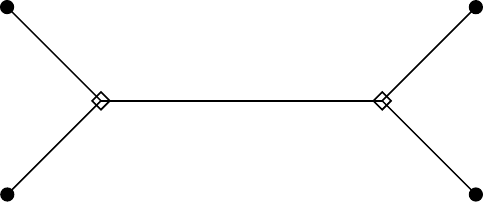} \\ 
4 & $\metric_{12|34} = \metric_{13|24} = \metric_{14|23}$ & $\begin{pmatrix}
     0&1&1&1 \\ 1&0 &1&1\\1&1&0&1\\ 1 & 1 &1 &0
\end{pmatrix}$ & (12 24 14) & \includegraphics[width=6em]{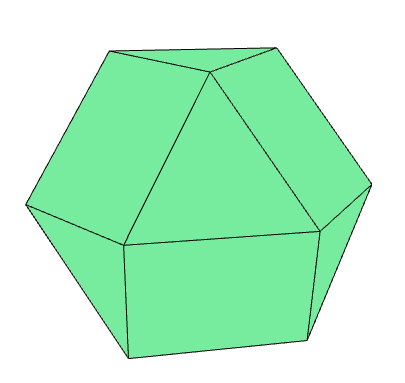} & \includegraphics[width=3em]{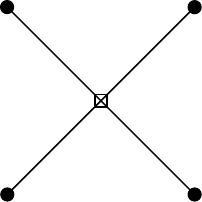} \\ 
\bottomrule 
\end{tabular}
\caption{Types of strict metrics for $n = 4$, their KRW polytopes and the corresponding tight spans. The notation $\metric_{ij|kl}$ is the value of the split of the four points in the sets $\{i,j\}$ and $\{k,l\}$. This is the only relevant information for the classification because every four-point metric is split decomposable and the elementary splits form the lineality space of $\CA{4}$(see \Cref{lem:lineality_space}).}
\label{tab:metrics4}
\end{table} 
\begin{figure}
		\includegraphics[width=0.5\textwidth]{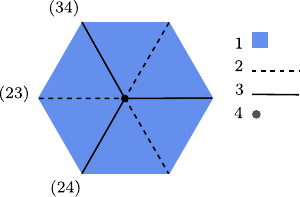} 
		\caption{Wasserstein arrangement for $n= 4$, coded by the tight spans. For detailed encoding of the arrangement see \Cref{fig:arrangement4} and the numbers 1-4 correspond to the tight spans in \Cref{tab:metrics4}. Number 4 is the cone of convex combinations of elementary splits.} 
		\label{fig:tight_span_4}
\end{figure}

\subsection{The case $n=5$}\label{n_gleich_5}
This case was studied using the Julia package \texttt{OSCAR}~\cite{OSCAR-book} and the \texttt{Polymake.jl} package, whose methods were used to find sample inner points of the maximal cones of the arrangement, which are recorded in \Cref{tab:metrics5_gen}. 
The characteristic polynomial of the Wasserstein arrangement~$\CA{5}$ is $\chi_{\CA{5}}(t)=t^{10} - 15t^9 + 90t^8 - 260t^7 + 350t^6 - 166t^5$ which does not factor over the integers.
Thus by Zaslavsky's theorem, the arrangement~$\CA{5}$ has $882$ chambers which agrees with the number of maximal cones of the fan $\GPF{5}$~\cite{Zas75}.
There are 12 generic KRW polytopes up to symmetry that are pairwise not isomorphic. The $f$-vector of these polytopes is the vector $(20\,\, 90\,\, 140\,\, 70)$, as was proved in \cite{GordonPetrov}. We give a description of these metrics in ~\Cref{sec:gen_5_metrics}. 

Moreover, there are 65 combinatorial types of strict but not generic KRW polytopes. 
We computed these using the \texttt{polymake}~\cite{polymake} hyperplane arrangement package~\cite{polymake_arrangements} by enumerating all cells of the Wasserstein arrangement.
We record descriptions of these 65 combinatorial types and their metrics in~\Cref{sec:strict}.

\subsection{The case $n=6$.}
For $6$-point metrics we employed two approaches.
First, we computed the number of chambers of the arrangement $\CA{6}$ using the \texttt{julia} module \texttt{CountingChambers.jl} \cite{BEK23}.
This is an arrangement in $\R^{15}$  consisting of $105$ hyperplanes with a $6$-dimensional lineality space, see \Cref{prop:size_wn}.
We found that $\CA{6}$ has $6,677,863,200$ chambers.

As explained in~\Cref{sec:triangulations}, the generic KRW polytopes correspond exactly  to certain regular triangulations of $RP^0_n$, the (full) root polytope together with the origin, see \Cref{RootTriangulations}.
Jörg Rambau computed  that there are $17,695,320$  symmetric central regular triangulations of $RP^0_6$ using the latest version of his software \texttt{TOPCOM}~\cite{Ram23}.
This count agrees by the above discussion with the number of labeled combinatorial types of generic KRW polytopes.
Up to symmetry, there are exactly $25,224$ such triangulations.
This number therefore agrees with the number of KRW polytopes of $6$-point metrics up to the action of the symmetry group $S_6$.
We record both statements in the following theorem.

\begin{theorem}
	There are exactly $17,695,320$ labeled  combinatorial types of generic KRW polytopes corresponding to $6$-point metrics.
	Under the action of the symmetric group $S_6$ these polytopes come in $25,224$  orbits.
\end{theorem}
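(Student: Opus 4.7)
The statement has two parts, and I would verify each by a distinct computational route developed earlier.

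\textbf{Labeled count.} By the proposition identifying the combinatorial structure of $\KRW{\metric}$ with an open cone of $\GPF{6}$, two generic $6$-metrics yield combinatorially equivalent KRW polytopes if and only if they lie in the same chamber of $\GPF{6}$. By \Cref{FanArr} the chambers of $\GPF{6}$ are in bijection with the chambers of the full Wasserstein arrangement $\CA{6}\subseteq\R^{15}$, and by \Cref{prop:size_wn},
\[
|\CA{6}| = \tfrac{1}{2}\bigl(\tbinom{6}{4}\cdot 3!+\tbinom{6}{6}\cdot 5!\bigr) = 105.
\]
I would write out these $105$ hyperplane normals explicitly from the indexing pairs of $k$-tuples $(\mathbf a,\mathbf b)$ with $k\in\{2,3\}$, and then pass them to a chamber-counting routine such as the \texttt{Julia} package \texttt{CountingChambers.jl}. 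That package implements deletion--restriction together with a finite-field specialisation à la Athanasiadis, evaluating $(-1)^{\mathrm{rk}}\chi_{\CA{6}}(-1)$ without ever enumerating chambers individually. Reading off the output should give the claimed $6{,}677{,}863{,}200$.

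\textbf{Unlabeled count.} For this part I would use \Cref{RootTriangulations}: combinatorial types of KRW polytopes of a $6$-metric are in bijection with the point-symmetric regular triangulations of $\cR_6^0$ whose maximal cells all contain the origin. The coordinate-permutation action of $S_6$ on metrics matches the natural $S_6$-action on $\cR_6^0$ by $\sigma\cdot(e_i-e_j)=e_{\sigma(i)}-e_{\sigma(j)}$, so $S_6$-orbits of generic chambers correspond to $S_6$-orbits of such triangulations. I would hand the point configuration $\cR_6^0$, together with the generators of $S_6$, to \texttt{TOPCOM} and request the enumeration of regular triangulations up to this symmetry. The constraints (point-symmetry about the origin, and origin in every maximal cell) follow automatically from working with a symmetric height vector coming from a metric, and can either be imposed as side conditions or used a posteriori to filter the output. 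The resulting orbit count should be $25{,}224$.

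\textbf{Main obstacle and cross-checks.} The difficulty is essentially computational. The arrangement lives in a $15$-dimensional ambient space with a $6$-dimensional lineality (so effectively rank $9$), carries $105$ hyperplanes, and has more than $10^9$ chambers; direct chamber enumeration is prohibitive, and the feasibility of the labeled count rests entirely on the efficiency of the modular computation of $\chi_{\CA{6}}$. Similarly, the raw number of regular triangulations of $\cR_6^0$ before quotienting is very large, so it is essential that the symmetric-enumeration algorithm inside \texttt{TOPCOM} be used throughout rather than as a post-processing step. A Burnside-type identity relating the two counts through the fixed-chamber numbers of non-trivial elements of $S_6$ would provide an independent consistency check once the stabiliser of each orbit representative has been recorded.
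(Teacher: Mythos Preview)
Your proposal is correct and follows essentially the same route as the paper: the labeled count is obtained by feeding the $105$ hyperplanes of $\CA{6}$ (in $\R^{15}$, lineality dimension $6$) to \texttt{CountingChambers.jl}, and the unlabeled count comes from enumerating, via \texttt{TOPCOM}, the centrally symmetric regular triangulations of $\cR_6^0$ up to the $S_6$-action. The only addition in your write-up is the Burnside-style cross-check, which the paper does not perform.
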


\renewcommand*{\bibfont}{\footnotesize}
\printbibliography

\appendix

\section{Polyhedral complexes associated to pseudometrics}\label{ats}

Our goal in this appendix is to describe the difference between the tight span $E(\rho)$ and the polyhedral complex $E^\sharp(\rho)$  that we call  ``pruned tight span" in \Cref{sec:tight_spans} (see \Cref{pts,ctts}) in an explicit and self-contained way. The distinction is in order because in general the complexes $E^\sharp (\rho)$ and $E(\rho)$  do not coincide, although they are sometimes called by the same name in the literature.

Recall the polyhedron
$$P(\metric):=\{x\in \R^{n} \mid x_i+x_j\geq \metric_{ij}\textrm{ for }i,j\in [n]\}$$
and let $E(\metric)$ be the complex of bounded faces of $P(\metric)$. As we have seen in \Cref{sec:tight_spans}, $E(\metric)$ is the set underlying a realization of the injective hull of $\metric$ (also named ``tight span" in \cite{Dress}).

\begin{definition}\label{AppDef}
Given $\metric\in \R^{{n\choose 2}}$, define
$$P^{\sharp}(\metric):=\{x\in \R^{n} \mid x_i+x_j\geq \metric_{ij}\textrm{ for }i,j\in [n],\, i\neq j\}$$ 
and let $E^\sharp (\metric)$ be the complex of bounded faces of $P^\sharp (\metric)$. 
\end{definition}

In order to study the relationship between $E^\sharp (\rho)$ and $E(\rho)$ we will start by defining, for every $k\in [n]$, two points $\hk{k}{},\vk{k}{}\in \R^n$ as follows:

\begin{eqnarray}
& \hk{k}{i} & :=  \metric_{ki}\quad\textrm{ for all }i;\\
& \vk{k}{i} & := \begin{cases}
\xi_k & i=k\\
\metric_{ik}-\xi_k &\textrm{ otherwise}
\end{cases},\quad\textrm{ where }\\
& \xi_k & :=   \frac{1}{2}\min\{\metric_{ik}+\metric_{kj} - \metric_{ij}\mid k\not \in \{i,j\}\}.
\end{eqnarray}

\begin{remark} Note that in the terminology of \cite{BandeltDress} the number $\xi_k$ is the {\em isolation index} of $k$. For every pseudometric $\metric$ and every $k\in[n]$ we have $\xi_k\geq 0$; moreover, $\xi_k = 0$ if and only if $\vk{k}{}=\hk{k}{}$.
\end{remark}

We will prove the following statement.
\begin{theorem}\label{thm1}\label{AppAntennas}
	The points $\vk{k}{}$ for $k\in [n]$ are vertices of the complex $E^\sharp(\metric)$, and the latter is a polyhedral subcomplex of $E(\metric)$. More precisely, $E(\metric)$ is obtained by adding to $E^\sharp(\metric)$ the vertex $\hk{k}{}$ and the edge $e^{(k)}:=\conv\{\hk{k}{},\vk{k}{}\}$ for every $k$ with $\xi_k>0$.
\end{theorem}

Before proving the theorem we state a corollary about the metric fan $\MF{n}$. Recall the natural labelings of tight spans in \Cref{ctts}.

\begin{corollary}\label[corollary]{EMF}
The labeled combinatorial type of $E(\metric)$ is determined by the face of $\MF{n}$ containing $\metric$.
\end{corollary}
\begin{proof} We know by \Cref{hjrem} that the labeled combinatorial type of $E^\sharp(\metric)$ is determined by the stratum of $\pstratsharp{n}$ containing $\metric$, and the latter is a union of cones of $\MF{n}$. Since by \Cref{thm1} the complex $E(\metric)$ differs from $E^\sharp(\metric)$ by attaching one dangling edge at $\vk{k}{}$ for every $k$ with $\xi_k>0$, it is enough to show that, for every $k\in [n]$, the set of all pseudometrics with $\xi_k=0$ is a union of faces of the pseudometric cone $\mconeclosure{n}$. Indeed, let $K_{ij}^k$ be the hyperplane in $\R^{{n\choose 2}}$ with equation $x_{ik} + x_{kj} - x_{ij}=0$. This hyperplane clearly supports $\mconeclosure{n}$, hence $F_{ij}^k:= \mconeclosure{n} \cap K_{ij}^k$ is a face of $\mconeclosure{n}$. The set of all pseudometrics with $\xi_k=0$ is the union $\bigcup_{i,j\neq k} F_{ij}^k$.
\end{proof}

We need to prepare the proof of \Cref{thm1} by a series of lemmata.

\begin{lemma}\label[lemma]{subcomplex} For every pseudometric $\rho$ we have
\begin{enumerate}
\item[(i)] $E^\sharp (\metric)\subseteq \R^n_{\geq 0}$; \label{positivo}
\item[(ii)] The complex $E^{\sharp}(\metric)$ is a subcomplex of $E(\rho)$.
\end{enumerate}
\end{lemma}
\begin{proof}
For the first claim, we show that $E^\sharp (\metric)$ has no vertex outside the positive orthant. Consider $x\in E^\sharp (\metric)$ and suppose that $x_1<0$. Then for every $i,j\neq 1$ we have $x_i+x_j > x_i+x_1+x_1+x_j \geq \metric_{1i} + \metric_{1j}\geq \metric_{ij}$, hence $x_i+x_j>\metric_{ij}$. This means that the defining equations of $P^\sharp(\metric)$ that hold with equality at $x$ are at most those of the type $x_1+x_i\geq \metric_{ij}$. Since the intersection of the hyperplanes $x_1+x_i = \metric_{ij}$ has codimension at most $n-2$, the face of $P^\sharp(\metric)$ containing $x$ cannot be a vertex. 

For the second claim let $F^\sharp$ be a face of $E^\sharp(\metric)$, i.e., a bounded face of $P^\sharp(\metric)$.  Surely $F:=F^\sharp \cap \R^n_{\geq 0}$ is a bounded face of $P(\metric)$, and by claim (i) we have $F=F^\sharp$. Thus $F^\sharp$ is a bounded face of $P(\metric)$ as well. This show that every face of $E^\sharp(\metric)$ is a face of $E(\metric)$. The claim follows.
\end{proof}

\begin{lemma}\label[lemma]{ray}
Let $\metric$ be a pseudometric. Then, for every $k\in [n] $ the following holds.
\begin{itemize}
\item[(i)]
For all $k\in [n]$, the point $\vk{k}{}$ is a vertex of $E^\sharp(\metric)$;
\item[(ii)] The complex $E^\sharp(\metric)$ contains the ray $R_k:= \vk{k}{} + \R_{\geq 0}s^{(k)}$, where
$$s^{(k)} = e_1+\ldots e_{k-1} - e_k + e_{k+1} +\ldots + e_{n}.$$
\end{itemize}
\end{lemma}

\begin{proof}
It is straightforward to check that all inequalities that define $P^\sharp$ are satisfied by $\vk{k}{}$, thus  $\vk{k}{}\in P^\sharp$. We show that $\vk{k}{}$ is indeed a vertex, hence a bounded face, of $P^\sharp$.  Let $i_0$ and $j_0$ be indices for which the minimum defining $\xi_k$ is attained, i.e., $\metric_{i_0k}+ \metric_{kj_0}-\metric_{i_0j_0}$. Now $\vk{k}{}$ lies on the hyperplane $H_0$ with equation $x_{i_0}+x_{j_0}=\metric_{i_0j_0}$. Moreover, $\vk{k}{}$ is contained in the hyperplanes $H_{ik}:=\{x\in \R^n \mid x_i+x_k=\metric_{ik}\}$ for all $i\neq k$. The normals of the planes $H_0$ and $H_{ik}$ are a linearly independent set  of rank $n$ and thus their intersection, since it is nonempty, has dimension $0$.
\end{proof}

\begin{lemma}\label[lemma]{partition} \strut
\begin{itemize}
\item[(i)]
$E(\metric)\cap \coh{k} =\{ \hk{k}{i} \}$ and in particular every $\hk{k}{}$ is a vertex of $E$. \label{hvert}
\item[(ii)] If $\xi_k>0$ then $E(\metric)\setminus \{\vk{k}{}\}$ is disconnected, and the induced partition of the vertices of $E(\metric)$ is a bipartition with $\hk{k}{}$ in a singleton block. \label{mega}
\end{itemize}
\end{lemma}

\begin{proof}
The first statement is \cite[Theorem 3.(ii)]{Dress}.

For the second statement notice first that for $i,j\neq k$ we have \\
$\vk{k}{i}+\vk{k}{j} = \metric_{ki}+\metric_{kj}-2\xi_k \geq \metric_{ki}+\metric_{kj} - (\metric_{ki}+\metric_{kj} - \metric_{ij} ) = \metric_{ij}$.

Moreover, for all $i\neq k$ we have
\begin{equation*}\label{equg}
\vk{k}{i}+\vk{k}{k}=\metric_{ki} - \xi_k + \xi_k = \metric_{ki}
\end{equation*}

Thus $\vk{k}{}$ satisfies the hypotheses of \cite[Theorem 6]{Dress} with respect to the bipartition $[n]=\{k\}\uplus ([n]\setminus \{k\})$. This theorem says that $E(\metric)\setminus \vk{k}{}$ is the union of two open (in $E$) subsets:
$$
\underbrace{\{x\in E(\metric) \mid x_{k}< \vk{k}{k}\}}_{=:A} \uplus
\underbrace{\{
x\in E(\metric) \mid x_{i}< \vk{k}{i} \textrm{ for some }i\in [n]\setminus\{k\}
\}}_{=:B}
$$
If $\xi_k>0$ neither subset is empty, since in that case $\hk{k}{}\in A$ (because $\hk{k}{k}=0<\xi_k=\vk{k}{k}$) and $\hk{j}{}\in B$ for all $j\neq k$ (indeed $\vk{k}{}\in E(\metric)$ means $\vk{k}{j}=\metric_{jk}-\xi_k\geq 0$ for $j\neq k$. Thus $\hk{j}{k}=\metric_{jk}\geq \xi_k=\vk{k}{k}$, hence $\hk{j}{}\not\in A$, i.e., $\hk{k}{}\in B$).

It remains to show that all all-positive vertices of $E(\metric)$ are in $B$. If $n\leq 2$ there is nothing to show. Thus let $n\geq 3$. An all-positive vertex $y$ of $E(\metric)$  is, by \Cref{subcomplex}, a vertex of $E^\sharp(\metric)$. Since the intersections of all hyperplanes $x_i+x_k=\metric_{ik}$ with $i\neq k$ has codimension at most $n-2$, there is at least a pair $i\neq j$ with $i,j\neq k$ such that $y_i+y_j=\metric_{ij}$. Now we can compute
$$
2y_k + \metric_{ij} = 2y_k + y_i+y_j \geq \metric_{ik} + \metric_{jk}.
$$
Thus 
$$y_k\geq \frac{1}{2}(\metric_{ik} + \metric_{jk} - \metric_{ij} )\geq \xi_k$$
hence $y\not\in A$ and so $y\in B$.
\end{proof}

\begin{lemma}\label[lemma]{edge}
If $\xi_k>0$, the line segment $e^{(k)}$ joining $\hk{k}{}$ $\vk{k}{}$ is a cell of $E(\metric)$; moreover, it is the only cell of $E(\metric)$ containing $\hk{k}{}$.
\end{lemma}
\begin{proof}
	Notice that, with the notation of \Cref{ray}.(ii), we have $\hk{k}{} = \vk{k}{} + \xi_ks^{(k)}$ and, for all $i\neq k$,  $\vk{k}{i} + \lambda s^{(k)}_i > 0$ for $0<\lambda < \xi_k $. Thus since $R_k$ is a face of $P^\sharp(\metric)$, $e^{(k)}=R_k\cap \R^n_{\geq 0}$ is a face of $P(\metric)$, which is bounded by construction. This proves the first claim.
	
	For the second claim assume by way of contradiction that $\hk{k}{}$ is contained in another cell. Then, this cell would have to have at least another vertex $w$ (different from $\vk{k}{}$), so even after removing $\vk{k}{}$ there would be a path in $E(\metric)$ connecting $\hk{k}{}$ with $w$. But this contradicts the disconnectedness claim of \Cref{partition}.(ii).
\end{proof}

\begin{proof}[Proof of \Cref{thm1}]
The first claim is \Cref{ray}.(i), the second is  \Cref{subcomplex}.(ii), the third follows by combining \Cref{partition}.(i) and \Cref{edge}.
\end{proof}

\section{Five point metrics}\label{sec:five_point_metrics_comp}

\subsection{Table of generic metrics on 5 points}\label{sec:gen_5_metrics}
We give a description of the $12$ combinatorial types of generic KRW polytopes. The symmetric group $S_5$ acts on the chambers by coordinate permutation. The number of chambers yielding the same metric as a given chamber is 120 divided by the order of the stabilizer of the chamber under this action. 

In particular, two chambers yield combinatorially equivalent polytopes if and only if there is $\sigma \in S_5$ mapping one chamber to the other. The corresponding KRW polytopes encoded as Polymake objects can be found in Zenodo \cite{zenodo}.

\renewcommand\arraystretch{0.5}
\begin{longtable}{@{}llllllllllll @{}}
\toprule

\textsc{Nr.}& $\metric_{12} $& $ \metric_{13} $&$ \metric_{14}$&$ \metric_{15} $&$ \metric_{23} $&$ \metric_{24} $&$ \metric_{25} $&$ \metric_{34} $&$ \metric_{35} $&$ \metric_{45}$  & \textsc{stabilizer} \\

\midrule
1 &   8 &18 & 15 &11& 14& 15& 15& 7& 11& 14   & $\langle (13)(24)\rangle$  \\
2 &  8 & 20 & 15 & 13 & 16 & 19 &13 &9 &11& 16  & \groupnamea \\ 
3 &  8& 18& 19& 11& 18& 15& 15& 7& 11& 14  &  $\langle (13)(24)\rangle$  \\
4 &  10& 22& 17& 11& 16& 15& 17& 9 &15& 20  & \groupnamea\\
5 &  6 &12 &8& 6& 9& 8& 8& 6& 8& 12   & \groupnamea\\
6 &  6& 11& 7& 6& 8& 9& 6& 6& 7& 11  &  $\langle (15)(34)\rangle$  \\ 
7 &  10& 22& 15& 13& 16& 21& 11& 11& 13& 20 & \groupnamea\\ 
8 &    12& 24& 25& 11& 24& 17& 19& 11& 17& 24  &  $\langle (25)(34)\rangle$  \\ 
9 &    7& 16& 11& 10& 11& 16& 10& 7& 10& 10 &  $\langle (13)(24), (12)(34)\rangle$ \\ 
10 &   14& 30& 21& 17& 20& 15& 27& 13& 17& 26  & $ \langle (13)(24)\rangle $ \\ 
11 &   7& 13& 8& 7& 9& 7& 10& 7& 8& 13 &  $ \langle (14)(35)\rangle $\\ 
12 &  11& 20& 20& 11& 20& 11& 20& 11& 11& 20   & $D_{10}$ in $S_5$. \\ 
\bottomrule
\caption{Types of generic metrics for $n = 5$.}
\label{tab:metrics5_gen}
\end{longtable}

\subsection{Table of strict metrics on $5$ points}\label{sec:strict}
In the following table, each row represents one of the $65$ combinatorial types of KRW polytopes of strict metric spaces on $5$ points. For each combinatorial type, we list a sample metric $\metric$, the $f$-vector of the polytope $\KRW{\metric}$ and the stabilizer subgroup of the cone of $\GPF{5}$ containing $\metric$ (see \Cref{n_gleich_5}).
The corresponding KRW polytopes encoded as Polymake objects can be found in Zenodo \cite{zenodo}. 

\renewcommand\arraystretch{0.5}

\begin{longtable}{@{}llllllllllll p{1in}@{}}
	\toprule
	\textsc{$f$-vector} &\textsc{Nr.} & $\metric_{12} $& $ \metric_{13} $&$ \metric_{14}$&$ \metric_{15} $&$ \metric_{23} $&$ \metric_{24} $&$ \metric_{25} $&$ \metric_{34} $&$ \metric_{35} $&$ \metric_{45}$  & \textsc{stabilizer}\\
	
	\midrule

20 60 70 30  & 1.1 & 2& 2& 2& 2& 2& 2& 2& 2& 2& 2& $S_5$\\
&&&&&&&&&&& \\ 
20 72 94 42  & 2.1 & 2& 3& 3& 2& 3& 3& 2& 2& 3& 3& $D_{12}$\\
& 2.2 & 3& 4& 3& 3& 3& 4& 4& 3& 3& 4& $D_{12}$\\
&&&&&&&&&&& \\ 
20 76 102 46  & 3.1 & 4& 6& 5& 5& 6& 5& 5& 5& 5& 6& $C_2\times C_2$\\
&&&&&&&&&&& \\ 
20 80 112 52  & 4.1 & 4& 8& 7& 5& 8& 7& 5& 5& 7& 8& $C_2$\\
& 4.2 & 3& 5& 4& 3& 4& 5& 4& 3& 4& 5& $C_2$\\
& 4.3 & 6& 10& 7& 7& 8& 9& 9& 7& 7& 10& $C_2$\\
&&&&&&&&&&& \\ 
20 80 114 54  & 5.1 & 2& 4& 4& 3& 4& 4& 3& 2& 3& 3& $D_8$\\
& 5.2 & 4& 8& 7& 7& 8& 7& 7& 5& 5& 6& $C_2\times C_2$\\
& 5.3 & 3& 4& 3& 3& 4& 3& 3& 3& 3& 4& $C_2\times C_2$\\
& 5.4 & 10& 14& 9& 9& 10& 11& 11& 9& 9& 14& $D_8$\\
&&&&&&&&&&& \\ 
20 82 116 54  & 6.1 & 3& 5& 4& 4& 4& 5& 3& 3& 3& 4& $C_2$\\
&&&&&&&&&&& \\ 
20 84 122 58  & 7.1 & 6& 10& 9& 7& 8& 7& 9& 5& 7& 8& $C_2$\\
& 7.2 & 3& 6& 5& 4& 5& 6& 3& 3& 4& 5& \groupnamea\\
& 7.3 & 4& 7& 5& 5& 5& 7& 5& 4& 4& 6& \groupnamea\\
& 7.4 & 6& 12& 9& 9& 10& 11& 7& 7& 7& 10& \groupnamea\\
& 7.5 & 6& 12& 9& 7& 10& 7& 9& 7& 9& 12& \groupnamea\\
& 7.6 & 6& 10& 7& 7& 8& 9& 5& 7& 7& 10& $C_2$\\
& 7.7 & 6& 12& 9& 7& 10& 11& 9& 7& 9& 12& \groupnamea\\
&&&&&&&&&&& \\ 
20 84 124 60  & 8.1 & 4& 10& 9& 7& 10& 9& 7& 5& 7& 8& $C_2$\\
& 8.2 & 6& 12& 9& 9& 10& 11& 11& 7& 7& 10& $C_2$\\
& 8.3 & 3& 5& 4& 3& 5& 4& 3& 3& 4& 5& $C_2$\\
& 8.4 & 5& 8& 5& 5& 6& 6& 6& 5& 5& 8& $C_2$\\
&&&&&&&&&&& \\ 
20 86 128 62  & 9.1 & 3& 6& 5& 4& 5& 6& 5& 3& 4& 5& $C_2$\\
& 9.2 & 6& 12& 11& 7& 10& 9& 9& 5& 9& 10& \groupnamea\\
& 9.3 & 8& 14& 11& 9& 10& 11& 13& 7& 9& 12& $C_2$\\
& 9.4 & 3& 7& 6& 5& 6& 7& 4& 3& 4& 5& $C_2$\\
& 9.5 & 4& 8& 6& 5& 6& 8& 5& 4& 5& 7& \groupnamea\\
& 9.6 & 2& 4& 3& 3& 3& 4& 3& 2& 2& 3& \groupnamea\\
& 9.7 & 6& 14& 11& 9& 12& 13& 7& 7& 9& 12& \groupnamea\\
& 9.8 & 6& 14& 11& 11& 12& 13& 9& 7& 7& 10& \groupnamea\\
& 9.9 & 8& 16& 11& 11& 12& 15& 11& 9& 9& 14& \groupnamea\\
& 9.10 & 8& 14& 15& 9& 14& 11& 13& 7& 9& 12& $C_2$\\
& 9.11 & 8& 16& 13& 9& 12& 9& 13& 7& 11& 14& \groupnamea\\
& 9.12 & 6& 14& 11& 7& 12& 9& 9& 7& 11& 14& \groupnamea\\
& 9.13 & 8& 16& 11& 9& 12& 11& 13& 9& 11& 16& \groupnamea\\
& 9.14 & 10& 16& 11& 9& 12& 13& 11& 9& 11& 16& $C_2$\\
& 9.15 & 4& 7& 5& 4& 6& 4& 5& 4& 5& 7& \groupnamea\\
& 9.16 & 12& 20& 13& 11& 14& 13& 15& 11& 13& 20& $C_2$\\
& 9.17 & 6& 12& 9& 7& 10& 11& 5& 7& 9& 12& $C_2$\\
& 9.18 & 8& 14& 9& 9& 10& 13& 9& 9& 9& 14& \groupnamea\\
& 9.19 & 8& 16& 11& 11& 12& 15& 7& 9& 9& 14& \groupnamea\\
& 9.20 & 8& 16& 11& 9& 12& 7& 13& 9& 11& 16& $C_2$\\
&&&&&&&&&&& \\ 
20 88 134 66  & 10.1 & 6& 14& 11& 9& 12& 13& 11& 7& 9& 12& \groupnamea\\
& 10.2 & 6& 14& 13& 9& 12& 11& 11& 5& 9& 10& $C_2$\\
& 10.3 & 8& 16& 13& 9& 12& 13& 13& 7& 11& 14& \groupnamea\\
& 10.4 & 4& 9& 7& 6& 7& 9& 6& 4& 5& 7& \groupnamea\\
& 10.5 & 6& 16& 13& 11& 14& 15& 9& 7& 9& 12& \groupnamea\\
& 10.6 & 8& 18& 13& 11& 14& 17& 11& 9& 11& 16& \groupnamea\\
& 10.7 & 8& 18& 13& 13& 14& 17& 13& 9& 9& 14& \groupnamea\\
& 10.8 & 8& 16& 17& 9& 16& 13& 13& 7& 11& 14& \groupnamea\\
& 10.9 & 8& 18& 15& 9& 14& 11& 13& 7& 13& 16& \groupnamea\\
& 10.10 & 10& 20& 15& 11& 14& 13& 17& 9& 13& 18& \groupnamea\\
& 10.11 & 8& 18& 13& 9& 14& 13& 13& 9& 13& 18& \groupnamea\\
& 10.12 & 5& 9& 6& 5& 7& 7& 6& 5& 6& 9& \groupnamea\\
& 10.13 & 4& 8& 6& 4& 7& 5& 5& 4& 6& 8& \groupnamea\\
& 10.14 & 6& 11& 7& 6& 8& 7& 8& 6& 7& 11& \groupnamea\\
& 10.15 & 12& 22& 15& 11& 16& 15& 15& 11& 15& 22& \groupnamea\\
& 10.16 & 8& 16& 11& 9& 12& 15& 9& 9& 11& 16& \groupnamea\\
& 10.17 & 6& 10& 6& 6& 7& 8& 6& 6& 6& 10& $C_2$\\
& 10.18 & 8& 18& 13& 11& 14& 17& 7& 9& 11& 16& \groupnamea\\
& 10.19 & 10& 20& 13& 13& 14& 19& 11& 11& 11& 18& \groupnamea\\
& 10.20 & 12& 22& 23& 11& 22& 15& 19& 11& 15& 22& $C_2$\\
& 10.21 & 12& 26& 19& 15& 18& 11& 23& 11& 15& 22& $C_2$\\
& 10.22 & 10& 20& 13& 11& 14& 11& 17& 11& 13& 20& \groupnamea\\

\bottomrule
\caption{Types of strict metrics for $n = 5$.}
\label{tab:metrics5_strict}
\end{longtable} 

\end{document}